\newcommand{\argmin}{\operatorname{argmin}}
\newcommand{\Rr}{{\mathbb{R}}}
\newcommand{\Nn}{{\mathbb{N}}}
\newcommand{\bi}{{\bold{i}}}
\newcommand{\Mm}{{\mathcal{M}}}
\newcommand{\lsb}{\left[}
\newcommand{\rsb}{\right]}
\newcommand{\lb}{\left(}
\newcommand{\rb}{\right)}
\newcommand{\Ss}{\mathcal{S}}
\newcommand{\re}{\mathbb{R}}
\newtheorem{pro}{Proposition}
\newtheorem{hyp}{Assumption}
\newtheorem{rem}{Remark}
\theoremstyle{definition}
\begin{document}

\title[Monotone numerical methods for finite-state MFG]{Monotone numerical methods for finite-state mean-field games}

\author{Diogo A. Gomes}
\address[D. A. Gomes]{
        King Abdullah University of Science and Technology (KAUST), CEMSE Division, Thuwal 23955-6900. Saudi Arabia, and  
        KAUST SRI, Uncertainty Quantification Center in Computational Science and Engineering.}
\email{diogo.gomes@kaust.edu.sa}
\author{Jo\~ao Sa\'ude}
\address[J. Saude]{
        Carnegie Mellon University, Electrical and Computer Engineering department. 5000 Forbes Avenue Pittsburgh, PA 15213-3890 USA.}
\email{jsaude@andrew.cmu.edu}

\keywords{Mean-field games; Finite state problems; Monotonicity methods}
\subjclass[2010]{91A13, 91A10, 49M30} 

\thanks{
        D. Gomes was partially supported by KAUST baseline and start-up funds and 
KAUST SRI, Center for Uncertainty Quantification in Computational Science and Engineering.  
        J. Sa\'ude was partially supported by FCT/Portugal through the CMU-Portugal Program.
}
\date{\today}
\begin{abstract}
        Here, we develop numerical methods for finite-state mean-field games (MFGs) that satisfy a monotonicity condition. 
    MFGs are determined by a system of differential equations with initial and terminal boundary conditions. 
    These non-standard conditions are the main difficulty in the numerical approximation of solutions. 
    Using the monotonicity condition, we build a flow that is a contraction and whose fixed points solve the MFG, both for stationary and time-dependent problems. We illustrate our methods in a MFG modeling the paradigm-shift problem. 
\end{abstract}
\maketitle
\section{Introduction} 
\label{sec:introduction}
        The mean-field game (MFG) framework \cite{Caines2, Caines1, ll1, ll2} models systems with many rational players (also see the surveys  \cite{GPV} and \cite{GS}). 
        In finite-state MFGs, players switch between a finite number of states, see \cite{GMS} for discrete-time and \cite{BHK, FG, GMS2, Gueant2}, and \cite{Gueant1} for continuous-time problems.
        Finite-state MFGs have applications in socio-economic problems, 
    for example, in paradigm-shift and consumer choice models \cite{BesancenotDogguy, GVW-dual,Gomes:2014kq}, and arise
     in the approximation of continuous state MFGs
     \cite{achdou2013finite, DY, AFG}.     
            
   Finite-state MFGs comprise systems of ordinary differential equations with initial-terminal boundary conditions. 
        Because of these conditions, the numerical computation of solutions is challenging. 
    Often, MFGs satisfy a monotonicity condition that was introduced in \cite{ll1}, and \cite{ll2} to study the uniqueness of solutions. 
   Besides the uniqueness of solution, monotonicity implies the long-time convergence of MFGs, see  \cite{FG} and \cite{GMS2} for finite-state models and \cite{CLLP} and \cite{cllp13} for continuous-state models. 
   Moreover, monotonicity conditions were used in \cite{FG2} to 
   prove the existence of solutions to MFGs and in \cite{FG}
   to construct numerical methods for stationary MFGs.   
        Here, we consider MFGs that satisfy a monotonicity condition and develop a numerical method to compute their solutions. For stationary 
        problems, our method is a modification of the one in \cite{FG}. The main contribution of this paper concerns the handling of the initial-terminal boundary conditions, where the methods from \cite{FG} cannot be applied directly.

        We consider MFGs where each of the players can be at a state in $I_d=\{1,\ldots,d\}$, $d\in \Nn$, $d>1$,  the players' state space.
        Let $\Ss^d=\{\theta\in (\Rr_0^+)^d : \sum_{i=1}^d \theta^i=1\}$ be the probability simplex in $I_d$. 
        For a time horizon, $T>0$, the macroscopic description of the game is determined by a path $\theta: [0,T] \to \Ss^d$ that gives the probability distribution of the players in $I_d$. 
        All players seek to minimize an identical cost. 
        Each coordinate, $u^i(t)$, of the value function,  $u: [0,T] \to \Rr^d$,  is the minimum cost for a typical player at state $i\in I_d$ at time $0\leq t\leq T$. 
        Finally, at the initial time, the players are distributed according to the probability vector $\theta_0\in \Ss^d$ and, at the terminal time, are charged a cost $u_T\in \Rr^d$ that depends on their state. 

       In the framework presented in \cite{GMS}, finite-state MFGs have   a Hamiltonian, $h:\Rr^d\times \Ss^d\times I_d\to \Rr$, and a switching rate, $\alpha^*_i:\Rr^d\times \Ss^d\times I_d\to \Rr_0^+$, given by
        \begin{equation}
                \label{alfstar2}
                \alpha^*_j=\frac{\partial h(\Delta_iz,\theta,i) }{\partial z^j}, 
        \end{equation}
        where $\Delta_i:\Rr^d\to \Rr^d$ is the difference operator
        \[
                (\Delta_i u)^j=u^j-u^i.
        \]
        We suppose that $h$ and $\alpha^*$ satisfy the assumptions discussed in Section \ref{assp}. Given the Hamiltonian and the switching rate, 
        we assemble the system of differential equations:
        \begin{equation} \label{MFM:cont_dyn}
        \begin{cases}
        u_t^i = -h(\Delta_i u,\theta,i)\\
        \theta_t^i=\sum_j \theta^j \alpha^*_i(\Delta_j u, \theta, j), 
        \end{cases}
        \end{equation}
        which, together with initial-terminal data
        \begin{equation}
        \label{itc}
        \theta(0) = \bar \theta_0 \text{ and } u(T)=\bar u_T, 
        \end{equation}
        with $\bar \theta_0\in \Ss^d$ and $\bar u_T\in \Rr^d$, determines the MFG.

Solving \eqref{MFM:cont_dyn} under
the non-standard boundary condition \eqref{itc} 
is a fundamental issue in time-dependent MFGs.
There are several ways to address this issue, but 
prior approaches are not completely satisfactory. 
First, we can solve \eqref{MFM:cont_dyn} using initial conditions
$\theta(0)=\bar \theta_0$ and $u(0)=u_0$ and then solve for $u_0$ 
such that $u(T)=\bar u_T$. 
However, this requires solving \eqref{MFM:cont_dyn} multiple times, which is computationally expensive. A more fundamental difficulty 
arises in the numerical approximation 
of continuous-state MFGs by finite-state MFGs. There, 
the Hamilton-Jacobi equation is a backward parabolic equation 
whose initial-value problem is ill-posed. Thus, a possible way 
to solve \eqref{MFM:cont_dyn} is to use a Newton-like iteration. This idea
was developed in \cite{achdou2013finite, MR2928376} and used to solve
a finite-difference scheme for a continuous-state MFG. However, Newton's method involves inverting large matrices and, thus, it is convenient to have 
algorithms that do not require matrix inversions.  
A second approach is to use a fix-point iteration as in \cite{MR3392626,MR3148086}. Unfortunately, 
this iteration is not guaranteed to converge. 
A third approach (see \cite{GVW-dual,Gomes:2014kq}) is to
solve the master equation, which is a partial differential equation  whose characteristics are given by \eqref{MFM:cont_dyn}. 
To approximate the master equation, 
we can use a finite-difference method constructed by solving $N$-player problem. Unfortunately, even for a modest number of states, this approach 
is computationally expensive. 

Our approach to the numerical solution of \eqref{MFM:cont_dyn} relies on the monotonicity of the operator, $A:\Rr^d\times\Rr^d\to \Rr^d\times \Rr^d$, 
given by
\begin{equation}
\label{opa}
A\left[
\begin{array}{c}
\theta\\
u
\end{array}\right]
=
\left[
\begin{array}{c}
h(\Delta_i u,\theta,i)\\
-\sum_{j} \theta^j \alpha^*_i(\Delta_j u,\theta,j) 
\end{array}
\right].                        
\end{equation} 
More precisely, we assume 
that $A$ is monotone (see Assumption \ref{A2}) in the sense that
\[
\left(
A\left[
\begin{array}{c}
\theta\\
u
\end{array}\right]-
A\left[
\begin{array}{c}
\tilde \theta\\
\tilde u
\end{array}\right],
\left[
\begin{array}{c}
\theta\\
u
\end{array}\right]
-
\left[
\begin{array}{c}
\tilde \theta\\
\tilde u
\end{array}\right]
\right)\geq 0
\]
for all $\theta, \tilde \theta\in \Ss^d$ and $u, \tilde u\in \Rr^d$. 
Building upon the ideas in \cite{AFG} for stationary problems (also see the approaches for stationary problems in
\cite{MR3420414,FS, AMFS, MR3597009}), 
we introduce the flow
\begin{equation}
\label{pmf}
\left[
\begin{array}{c}
\theta_s\\
u_s
\end{array}\right]
=-A\left[
\begin{array}{c}
\theta\\
u
\end{array}\right].  
\end{equation}
Up to the normalization of $\theta$, the foregoing flow is a contraction provided $\theta\in \Ss^d$. Moreover, its fixed points solve
\[
A\left[
\begin{array}{c}
\theta\\
u
\end{array}\right]=0.
\]
In Section \ref{sec:stationary_problem}, we construct a discrete version of \eqref{pmf} that preserves probabilities; 
that is, both the total mass of $\theta$ and its non-negativity. 

The time-dependent case is substantially more delicate and, hence, our method
to approximate its solutions is  
the main contribution of this paper.
The operator associated with the time-dependent problem, 
$A:H^1(0,T; \Rr^d\times \Rr^d)\to  L^2(0,T; \Ss^d\times \Rr^d)$, is
\begin{equation}
\label{opb}
A\left[
\begin{array}{c}
\theta\\
u
\end{array}\right]
=
\left[
\begin{array}{c}
-u_t+h(\Delta_i u,\theta,i)\\
\theta_t-\sum_{j} \theta^j \alpha^*_i(\Delta_j u,\theta,j) 
\end{array}
\right].                        
\end{equation} 
Under the initial-terminal condition in \eqref{itc}, $A$ is a monotone operator. Thus, the  flow 
\begin{equation}
\label{mf}
\left[
\begin{array}{c}
\theta_s\\
u_s
\end{array}\right]
=-A\left[
\begin{array}{c}
\theta\\
u
\end{array}\right]  
\end{equation}
for  $(\theta, u)\in L^2(0,T; \Rr^d\times \Rr^d)$
is formally a contraction. 
 Unfortunately, even if this flow is well defined, 
the preceding system does not preserve probabilities nor the boundary conditions \eqref{itc}. Thus, 
in Section \ref{sec:mean_field_solution}, we modify \eqref{mf} in a way that it becomes a contraction in $H^1$ and preserves the boundary
conditions. 
Finally, we discretize this modified flow and build a numerical
algorithm to approximate solutions of \eqref{MFM:cont_dyn}-\eqref{itc}. 
Unlike Newton-based methods, our algorithm does not need the inversion
of large matrices and scales linearly with the number of states. This is
particularly relevant for finite-state MFGs that arise from the 
discretization of continuous-state MFGs. 
We illustrate our results in a paradigm-shift problem introduced in \cite{BesancenotDogguy} and
studied from a numerical perspective in \cite{Gomes:2014kq}.

We conclude this introduction with a brief outline of the paper. 
In the following section, we discuss the framework, main assumptions, 
and the paradigm-shift example that illustrates our methods. 
Next, we address stationary solutions. Subsequently, in section 
\ref{sec:mean_field_solution}, we discuss the main contribution of this paper by addressing the initial-terminal value problem. There, 
we outline the projection method, explain its discretization, and 
present numerical results. The paper ends with a brief concluding section. 

\section{Framework and main assumptions} 
\label{assp}
Following \cite{GMS2}, we present the standard finite-state MFG framework and describe
our main assumptions.
Then, we discuss 
a paradigm-shift problem from \cite{BesancenotDogguy} that we
use to illustrate our methods.   

\subsection{Standard setting for finite-state MFG} 
\label{sub:standard_setting_for_finite_state_mfg}

Finite-state MFGs model systems with many identical players who act rationally and non-cooperatively. 
These players switch between states in $I_d$ seeking to minimize a cost.
Here, the macroscopic state of the game is a probability vector $\theta\in \Ss^d$ that gives the players' distribution in $I_d$. 
A typical player controls
the switching rate, $\alpha_j(i)$, from its state, $i\in I_d$, to a new state, $j\in I_d$.
Given the players' distribution $\theta(r)$ at time $r$, each player chooses a non-anticipating control, $\alpha$, that minimizes the cost
\begin{equation}
\label{ui}
u^i(t;\alpha)=E_{\bi_t=i}^{\alpha} \left[\int_t^T c(\bi_r,\theta(r),\alpha(r)) dr + u^{\bi_T}(\theta(T))\right]. 
\end{equation}
In the preceding expression,  $c:I_d\times \Ss^d\times(\Rr_0^+)^d\to \Rr$ is a running cost, $\Psi\in \Rr^d$ the terminal cost, and $\bi_s$ is a Markov process in $I_d$ with switching rate $\alpha$. 
The \emph{Hamiltonian}, $h$,  is the generalized Legendre transform of  $c(i,\theta,\cdot)$: 
        \begin{equation*}
                h(\Delta_i z,\theta,i) = \min_{\mu \in(\Rr^+_0)^d} \{ c(i,\theta,\mu) + \mu \cdot \Delta_i z\}.
        \end{equation*}
        
The first equation in \eqref{MFM:cont_dyn} determines the value function $u$ for \eqref{ui}. 
The optimal switching rate from state $i$ to state $j\neq i$ is given by
$\alpha^*_j(\Delta_iu,\theta,i)$, 
where
\begin{equation}
\label{alfstar}
        \alpha^*_j(z,\theta,i)= \argmin_{\mu \in(\Rr^+_0)^d} \{c(i,\theta,\mu)+\mu \cdot \Delta_i z\}.
\end{equation}
Moreover, at points of differentiability of $h$, we have \eqref{alfstar2}. 
The rationality of the players implies that each of them chooses the optimal switching rate, $\alpha^*$. 
Hence, $\theta$ evolves according to the second equation in \eqref{MFM:cont_dyn}.
\subsection{Main assumptions} 
\label{sub:the_monotonicity_condition}

Because we work with the Hamiltonian, $h$, rather than the running cost, $c$, it is convenient to state our assumptions in terms of the former. 
For the relation between assumptions on $h$ and $c$, see \cite{GMS2}.

We begin by stating a mild assumption that ensures the existence of solutions 
for \eqref{MFM:cont_dyn}.
\begin{hyp}
\label{A1}
The Hamiltonian $h(z,\theta, i)$ is locally Lipschitz in $(z, \theta)$,
differentiable in $z$, 
and the map $z\mapsto h(z, \theta, i)$ is concave for each $(\theta, i)$. 

The function $\alpha^*(z, \theta, i)$ given by \eqref{alfstar2} is locally Lipschitz.
\end{hyp}       

Under the previous Assumption, there exists a solution to \eqref{MFM:cont_dyn}-\eqref{itc}, see \cite{GMS2}. This solution may not be unique as the examples in \cite{GVW-dual} and \cite{Gomes:2014kq} show.
Monotonicity conditions are commonly used in MFGs  to prove
uniqueness of solutions. 
For finite-state MFGs, the appropriate monotonicity condition is stated in the next Assumption. Before proceeding, we define $\|v \|_{\sharp}=\inf_{\lambda\in \re}\|v +\lambda\mathbf{1} \|$.
\begin{hyp}
        \label{A2}
        There exists $\gamma>0$\ such that the Hamiltonian, $h$, satisfies the following monotonicity property
        \begin{align*}
                &\theta \cdot (h(z,\tilde \theta)-h(z,\theta))+ \tilde \theta\cdot(h(\tilde z,\theta)-h(\tilde z,\tilde \theta)   )\leq -\gamma \|\theta -\tilde \theta  \|^2.
        \end{align*}
        Moreover, for each $M>0$, there exist constants $\gamma_i$ such that on the set  
        $\|w\|,\|z\|_\sharp\leq M$, $h$ satisfies the following concavity property
        \begin{equation*}
                h(z,\theta,i)-h( w,\theta,i)-\alpha^*(w,\theta,i)\cdot\Delta_i(z-w)\leq -\gamma_i\|\Delta_i (z-w)\|^2.
        \end{equation*}         
\end{hyp}               
Under the preceding assumptions,             
\eqref{MFM:cont_dyn}-\eqref{itc} has a unique solution, see \cite{GMS2}.
Here, the previous condition is essential to the convergence of our numerical methods, both for stationary problems, in Section \ref{sec:stationary_problem}, 
and for the general time-dependent case, in Section \ref{sec:mean_field_solution}. 
\begin{rem}
\label{R1}      
As shown in \cite{GMS2}, 
        Assumption \eqref{A2} implies the
        inequality
        \begin{align*}
                &\sum_{i=1}^d (u^i-\tilde u^i) \lb \sum_j \theta^j \alpha^*(\Delta_j u,\theta,j) - \sum_j \tilde \theta^j \alpha^*(\Delta_j \tilde u, \tilde \theta,j) \rb \\
                &+ \sum_{i=1}^d  (\theta^i-\tilde \theta^i) \lb - h(\Delta_i u, \theta,i) + k +h(\Delta_i \tilde u, \tilde \theta,i) -\tilde k\rb\\
                &\leq -\gamma \|(\theta- \tilde \theta)(s)\|^2 - \sum_{i=1}^d \gamma_i(\theta^i+\tilde \theta^i)(s) \|(\Delta_i u- \Delta_i \tilde u)(s)\| ^2 
        \end{align*}
        for any $u, \tilde u\in \Rr^d$,  $\theta, \tilde \theta\in \Ss^d$, and $k, \tilde k\in \Rr$. 
\end{rem}

\subsection{Solutions and weak solutions}

Because the operator $A$ in \eqref{opb} is monotone, we have a natural
concept of weak solution for \eqref{MFM:cont_dyn}-\eqref{itc}. 
These weak solutions were considered for continuous-state MFGs in \cite{AFG} and in \cite{FG2}.
We say that $(u, \theta)\in L^2((0,T), \Rr^d)\times L^2((0,T), \Ss^d) $ is a weak solution of \eqref{MFM:cont_dyn}-\eqref{itc} if
for all  $(\tilde u, \tilde \theta)\in H^1((0,T), \Rr^d)\times H^1((0,T), \Ss^d)$ satisfying \eqref{itc}, 
we have
\[
\left\langle
A\left[
\begin{array}{c}
\tilde \theta\\
\tilde u
\end{array}\right], 
\left[
\begin{array}{c}
\tilde \theta-\theta\\
\tilde u-u
\end{array}\right]
\right\rangle\geq 0.
\]
Any solution of \eqref{MFM:cont_dyn}-\eqref{itc} is a weak solution, and any sufficiently regular weak solution with $\theta>0$ is a solution. 

Now, we turn our attention to the stationary problem.
        We recall, see \cite{GMS2}, that a stationary solution of \eqref{MFM:cont_dyn} is a triplet $(\bar \theta,\bar u, \bar k) \in \Ss^d \times \Rr^d \times \Rr$ satisfying
        \begin{equation} \label{stat_mfg}
        \begin{cases}
        h(\Delta_i \bar u, \bar \theta, i) = \bar k \\
        \sum_{j} \bar \theta^j \alpha^*_i (\Delta_j \bar u, \bar \theta,j) = 0
        \end{cases}
        \end{equation}
        for $i=1,\ldots,d$.
As discussed in \cite{GMS2},
the existence of solutions to \eqref{stat_mfg} holds under an additional 
contractivity assumption. In general, as for continuous-state MFGs, solutions for \eqref{stat_mfg} 
may not exist. Thus, we need to consider weak solutions.
For a finite-state MFG, a weak solution
of \eqref{stat_mfg} is a triplet
$(\bar u, \bar \theta, \bar k)\in \Rr^d\times\Ss^d\times \Rr$ that satisfies
        \begin{equation} \label{stat_mfg_ws}
        \begin{cases}
        h(\Delta_i \bar u, \bar \theta, i) \geq \bar   k \\
        \sum_{j} \bar \theta^j \alpha^*_i (\Delta_j \bar u, \bar \theta,j)
                = 0
        \end{cases}
        \end{equation}
        for $i=1,\ldots,d$, with equality in the first equation for all indices ~$i$ such that $\bar \theta^i>0$.

        \subsection{Potential MFGs} 
        \label{sub:potential_mean_field_games}
        In a potential MFG, the Hamiltonian is of the form
        \begin{equation*}
        h(\nabla_i u, \theta, i) = \tilde h(\nabla_i u,i) + f(\theta,i),
        \end{equation*}
        with $\tilde h: \Rr^d \times I_d \to \Rr$, $f:\Rr^d \times I_d \to \Rr$ and $f$ is the gradient of a convex function, 
        $F:\Rr^d \to \Rr$; that is,  $f(\theta,\cdot) = \nabla_{\theta}F(\theta)$.
        We define  $H:\Rr^d \times \Rr^d \to \Rr$ as
        \begin{equation}
        \label{hfor}
        H( u, \theta) = \sum_{i=1}^d \theta^i \tilde h(\nabla_{i} u,i) + F(\theta).
        \end{equation}
Then, \eqref{MFM:cont_dyn} can be written in Hamiltonian form as
\[
\begin{cases}
u_t=-D_\theta H(u, \theta)\\
\theta_t=D_u H(u, \theta). 
\end{cases}
\]
In particular,  $H$ is conserved:
\[
\frac{d}{dt}H(u,\theta)=0. 
\]
In section \ref{sec:numerical_implementation}, 
we use this last property as an additional test for our numerical method.

\subsection{A case study -- the paradigm-shift problem} 
\label{sub:a_case_study_the_paradigm_shift_problem}

        A paradigm shift is a change in a fundamental assumption within the ruling theory of science. 
                Scientists or researchers can work in multiple competing theories or problems. 
        Their choice seeks to maximize the  recognition (citations, awards, or prizes) and scientific activity (conferences or collaborations, for example). 
 This problem was formulated as a two-state MFG in \cite{BesancenotDogguy}. 
                 Subsequently, it was studied numerically in \cite{Gomes:2014kq} and \cite{GVW-dual} using a $N$-player approximation and PDE methods. 
                 Here, we present the stationary and time-dependent versions of this problem. 
       Later,  we use them to illustrate our numerical methods.  
                
        We consider the running cost $c:I_d\times \Ss^d \times (\Rr_0^+)^2\to \Rr$ given by
        \begin{equation*}
                c(i,\theta,\mu) = f(i,\theta) + c_0(i,\mu), \text{ where } c_0(i,\mu) = \frac{1}{2} \sum_{j\neq i}^2 \mu^2_j.
        \end{equation*}
        The functions $f=f(i,\theta)$ are \emph{productivity functions with constant elasticity of substitution}, given by
        \begin{align*}
        \begin{cases}
                f(1,\theta) = \lb a_1 (\theta^1)^r + (1-a_1)(\theta^2)^r\rb^{\frac{1}{r}}\\
                f(2,\theta) = \lb a_2 (\theta^1)^r+ (1-a_2)(\theta^2)^r\rb^{\frac{1}{r}} 
        \end{cases}        
        \end{align*}
        for $r\geq 0$ and $0\leq a_1, a_2\leq 1$.
        The Hamiltonian is 
        \begin{align*}
        \begin{cases}
                h(u,\theta,1) = f(1,\theta) - \frac{1}{2}\lb (u^1-u^2)^+\rb^2,\\
                h(u,\theta,2) = f(2,\theta) - \frac{1}{2}\lb (u^2-u^1)^+\rb^2,
        \end{cases}        
        \end{align*}
        and the optimal switching rates are 
        \begin{align*}
                \alpha_2^*(u,\theta, 1) =  (u^1-u^2)^+, \quad \alpha_1^*(u,\theta, 1) = -(u^1-u^2)^+,\\
                \alpha_1^*(u,\theta, 2) =  (u^2-u^1)^+, \quad \alpha_2^*(u,\theta, 2) = -(u^2-u^1)^+.
        \end{align*} 
                
        For illustration, we examine the case where
        $a_1=1$, $a_2=0$, and $r=1$ in the productivity functions above. In this case, $f=\nabla_\theta F(\theta)$ with \
\[
F(\theta)=\frac{(\theta^1)^2+(\theta^2)^2}{2}. 
\]
Moreover, the game is potential with
\[
H(u, \theta)=- \frac{1}{2}\lb (u^1-u^2)^+\rb^2\theta^1- \frac{1}{2}\lb (u^2-u^1)^+\rb^2\theta^2 +F(\theta).
\]
Furthermore, $(\bar \theta, \bar u,k)$ is a stationary solution if it solves
        \begin{equation} \label{sys:sta_sol_hj}
                \begin{cases}
                {\theta^1}-\frac{1}{2}((u^1-u^2)^+)^2=k\\
                {\theta^2}-\frac{1}{2}((u^2-u^1)^+)^2=k,
                \end{cases}
        \end{equation}
        and
        \begin{equation} \label{sys:stat_sol_kolm}
                \begin{cases}
                -\theta^1(u^1-u^2)^+ + \theta^2 (u^2-u^1)^+=0\\
                \theta^1(u^1-u^2)^+ - \theta^2 (u^2-u^1)^+=0.
                \end{cases}
        \end{equation}
        Since $\theta^1+\theta^2=1$, and using the symmetry of \eqref{sys:sta_sol_hj}-\eqref{sys:stat_sol_kolm}, we conclude that 
        \begin{equation}
        \label{ssf}
                (\bar \theta, \bar u, k) = \lb \lb\frac{1}{2},\frac{1}{2}\rb,(p,p), \frac{1}{2} \rb, \quad p \in \Rr.
        \end{equation}
                
        The time-dependent paradigm-shift problem 
        is determined by 
        \begin{equation} \label{sys:td_sol_hj}
                \begin{cases}
                        u^1_t=-{\theta^1}+\frac{1}{2}((u^1-u^2)^+)^2\\
                        u^2_t=-{\theta^2}+\frac{1}{2}((u^2-u^1)^+)^2,
                \end{cases}
        \end{equation}
        and
        \begin{equation} \label{sys:td_sol_kolm}
                \begin{cases}
                        \theta^1_t=-\theta^1(u^1-u^2)^+ + \theta^2 (u^2-u^1)^+\\
                        \theta^2_t=\theta^1(u^1-u^2)^+ - \theta^2 (u^2-u^1)^+, 
                \end{cases}
        \end{equation}
        together with initial-terminal conditions 
        \begin{equation*}
                \theta^i(0) = \theta_0, \text{ and } u^i(T) = u_T^i
        \end{equation*}
        for $i=1,2$, $\theta_0\in \Ss^2$, and $u_T\in \Rr^2$. 

\section{Stationary problems} 
\label{sec:stationary_problem}
    To approximate the solutions of \eqref{stat_mfg}, we introduce a flow closely related to \eqref{pmf}. 
    This flow is the analog for finite-state problems of the one considered in \cite{AFG}. 
    The monotonicity in Assumption \ref{A2} gives the contraction property. 
    Then, we construct a numerical algorithm using an Euler step combined with a projection step to ensure that $\theta$ remains a probability. 
    Finally, we test our algorithm in the paradigm-shift model. 
        \subsection{Monotone approximation} 
        \label{sub:contractive_auxiliary_dynamics}
        To preserve the mass of $\theta$, we introduce the following modification of \eqref{pmf} 
        \begin{equation} \label{eq:aux_dyn_stat}
                \begin{cases}
                u_s^i      = \sum_{j} \theta^j \alpha^*_i(\Delta_j u,\theta,j) \\
                \theta_s^i = -h(\Delta_i u,\theta,i)+k(s),
            \end{cases}
        \end{equation}
        where $k:\Rr_0^+\to \Rr$ is such that  $\sum_{i=1}^d \theta^i(s) = 1$ for every $s\geq 0$. 
                For this condition to hold, we need $\sum_{i=1}^d \theta_s^i=0$. 
        Therefore,                 
                \begin{equation} \label{eq:k_aux}
                k(s) = \frac{1}{d} \sum_{i=1}^d h(\Delta_i u,\theta,i).
        \end{equation}
        \begin{pro} \label{prop:contraction_sta_sol}
                Suppose that Assumptions \ref{A1}-\ref{A2} hold. 
            Let $(u,\theta)$ and $(\tilde u, \tilde\theta)$ solve \eqref{eq:aux_dyn_stat}-\eqref{eq:k_aux}. 
                        Assume that $\sum_i \theta^i(0)= \sum_i \tilde \theta^i(0)=1$ and  that $\theta(s), \tilde \theta(s)\geq 0$. 
                        Then, 
            \begin{align*}
                &\frac{d}{ds} \lb \|(u-\tilde u) \|^2 + \|\theta-\tilde \theta\|^2  \rb \\&\quad\leq -\gamma \|(\theta- \tilde \theta)(s)\|^2 
                                                - \sum_{i=1}^d \gamma_i(\theta^i+\tilde \theta^i)(s) \|(\Delta_i u- \Delta_i \tilde u)(s)\| ^2.
            \end{align*}
                \end{pro}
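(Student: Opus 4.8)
The plan is to differentiate $E(s):=\|(u-\tilde u)(s)\|^2+\|(\theta-\tilde\theta)(s)\|^2$ along the two flows and to recognize the result as the left-hand side of the inequality in Remark~\ref{R1}.

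Before differentiating, I would check that the mass constraint propagates in $s$. Summing the $\theta$-equation in \eqref{eq:aux_dyn_stat} over $i$ gives $\sum_i\theta^i_s=-\sum_i h(\Delta_i u,\theta,i)+d\,k(s)$, which vanishes by the definition \eqref{eq:k_aux} of $k$; hence $\sum_i\theta^i(s)=\sum_i\theta^i(0)=1$ for every $s\ge 0$, and likewise $\sum_i\tilde\theta^i(s)=1$. Together with the hypothesis $\theta(s),\tilde\theta(s)\ge 0$, this shows $\theta(s),\tilde\theta(s)\in\Ss^d$, so that Remark~\ref{R1} applies pointwise in $s$.

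Using $\frac{d}{ds}\|v\|^2=2\,v\cdot v_s$ and substituting \eqref{eq:aux_dyn_stat}, I would obtain
\begin{align*}
\tfrac12\tfrac{d}{ds}E(s)&=\sum_{i}(u^i-\tilde u^i)\Big(\sum_j\theta^j\alpha^*_i(\Delta_j u,\theta,j)-\sum_j\tilde\theta^j\alpha^*_i(\Delta_j\tilde u,\tilde\theta,j)\Big)\\
&\quad+\sum_i(\theta^i-\tilde\theta^i)\big(-h(\Delta_i u,\theta,i)+h(\Delta_i\tilde u,\tilde\theta,i)\big).
\end{align*}
The contributions of $k(s)$ and $\tilde k(s)$ to the last sum equal $\big(k(s)-\tilde k(s)\big)\sum_i(\theta^i-\tilde\theta^i)=0$ by the mass constraint, so the right-hand side above is exactly the left-hand side of the inequality in Remark~\ref{R1} with the free scalars taken to be $k=k(s)$ and $\tilde k=\tilde k(s)$. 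Applying Remark~\ref{R1} then bounds $\tfrac12\tfrac{d}{ds}E(s)$ by $-\gamma\|(\theta-\tilde\theta)(s)\|^2-\sum_i\gamma_i(\theta^i+\tilde\theta^i)(s)\|(\Delta_i u-\Delta_i\tilde u)(s)\|^2$, which yields the claim (in fact with constants $2\gamma$ and $2\gamma_i$, hence a fortiori with $\gamma$ and $\gamma_i$).

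I do not expect a genuine obstacle: once Remark~\ref{R1} is in hand the argument is bookkeeping. The points deserving care are that the mass constraint must be verified for all $s$ and not merely at $s=0$, since Remark~\ref{R1} is stated on $\Ss^d$ where the monotonicity and concavity bounds of Assumption~\ref{A2} hold, and that the cancellation of the $k$-terms is precisely what makes the \emph{modified} flow \eqref{eq:aux_dyn_stat} — rather than the raw flow \eqref{pmf} — compatible with the monotonicity estimate. Differentiability of both solutions in $s$, needed to start the computation, is part of the meaning of solving \eqref{eq:aux_dyn_stat} and is underpinned by the local Lipschitz regularity in Assumption~\ref{A1}.
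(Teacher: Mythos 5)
Your proposal is correct and follows essentially the same route as the paper: differentiate $\|u-\tilde u\|^2+\|\theta-\tilde\theta\|^2$, substitute the flow \eqref{eq:aux_dyn_stat}, and invoke Remark \ref{R1} (the paper simply keeps the $k$ and $\tilde k$ terms inside the expression, since Remark \ref{R1} holds for arbitrary $k,\tilde k\in\Rr$, whereas you cancel them explicitly via the conserved mass). Your additional checks --- that the mass constraint propagates in $s$ and that the factor $\tfrac12$ only strengthens the stated bound --- are sound and consistent with the paper's argument.
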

        \begin{proof}
                We begin with the identity
                \begin{align*}
                &\frac{1}{2} \frac{d}{ds} \sum_{i=1}^d \lsb (u^i-\tilde u^i)^2  + (\theta^i- \tilde \theta^i)^2 \rsb \\&
                \qquad= \sum_{i=1}^d (u^i-\tilde u^i) (u^i-\tilde u^i)_s 
                      + (\theta^i-\tilde \theta^i) (\theta^i-\tilde \theta^i)_s.
            \end{align*}
            Using \eqref{eq:aux_dyn_stat} in the previous equality, we obtain
            \begin{align*}
                \frac{1}{2} \frac{d}{ds} &\sum_{i=1}^d \lsb (u^i-\tilde u^i)^2  + (\theta^i- \tilde \theta^i)^2 \rsb\\ 
                        &= \sum_{i=1}^d (u^i-\tilde u^i) \lb \sum_j \theta^j \alpha^*(\Delta_j u,\theta,j) - \sum_j \tilde \theta^j \alpha^*(\Delta_j \tilde u, \tilde \theta,j) \rb \\
                        &+ \sum_{i=1}^d  (\theta^i-\tilde \theta^i) \lb - h(\Delta_i u, \theta,i) + k +h(\Delta_i \tilde u, \tilde \theta,i) -\tilde k\rb\\
                        &\leq -\gamma \|(\theta- \tilde \theta)(s)\|^2 - \sum_{i=1}^d \gamma_i(\theta^i+\tilde \theta^i)(s) \|(\Delta_i u- \Delta_i \tilde u)(s)\| ^2,
            \end{align*}
            by Remark \ref{R1}.             
                \end{proof}

\subsection{Numerical algorithm}

Let $A$ be given by \eqref{opa}. Due to the monotonicity, for $\mu$ small, the Euler map,
\[
E_\mu\left[
\begin{array}{c}
\theta\\
u
\end{array}
\right]
=
\left[
\begin{array}{c}
\theta\\
u
\end{array}
\right]
-\mu A\left[
\begin{array}{c}
\theta\\
u
\end{array}
\right], 
\]
is a contraction, provided that $\theta$ is a non-negative probability vector. However, $E_\mu$ may not keep $\theta$ non-negative
and, in general, $E_\mu$ also does not preserve the mass. Thus, we introduce the following  projection operator on $\Ss^d\times\Rr^d$:
\[
P
\left[
\begin{array}{c}
\theta\\
u
\end{array}
\right]
=
\left[
\begin{array}{c}
\varpi(\theta)\\
u
\end{array}
\right], 
\]
where $\varpi(\theta)_i=(\theta^i+\xi)^+$ and $\xi$ is such that 
\[
\sum_i \varpi(\theta)_i=1. 
\]
Clearly, $P$ is a contraction because it is a projection on a convex set. Finally, to approximate stationary solutions of \eqref{stat_mfg}, we consider the iterative map
\begin{equation}
\label{iterp}
\left[
\begin{array}{c}
\theta_{n+1}\\
u_{n+1}
\end{array}
\right]
=
P E_\mu
\left[
\begin{array}{c}
\theta_{n}\\
u_{n}
\end{array}
\right]. 
\end{equation}
We have the following result:

\begin{pro}
Let $(\bar \theta, \bar u, \bar k)$ solve \eqref{stat_mfg_ws}. Then, $(\bar \theta, \bar u )$ is a fixed point for \eqref{iterp}. Moreover, for any fixed point of  \eqref{iterp}, there exists $\bar k$ such that 
$(\bar \theta, \bar u, \bar k)$ solves  \eqref{stat_mfg_ws}.
 
Finally, if $\mu$ is small enough and \eqref{stat_mfg_ws} has a weak solution, $(\bar \theta, \bar u, \bar k)$, then
the iterates in \eqref{iterp} are bounded and converge to  $(\bar \theta, \bar u )$.
\end{pro}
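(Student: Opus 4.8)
The plan is to prove the three assertions in order: (i) any weak stationary solution is a fixed point of the iteration \eqref{iterp}; (ii) conversely, any fixed point yields a weak stationary solution after choosing the right constant $\bar k$; and (iii) under smallness of $\mu$ and existence of a weak solution, the iterates are bounded and converge. The first two are algebraic characterizations of the fixed-point set, and the third is where the contraction estimate from Proposition \ref{prop:contraction_sta_sol} (in its discrete Euler incarnation) does the work.

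For (i), suppose $(\bar\theta,\bar u,\bar k)$ solves \eqref{stat_mfg_ws}. I would compute $A[\bar\theta;\bar u]$ from \eqref{opa}: its second component is $-\sum_j\bar\theta^j\alpha^*_i(\Delta_j\bar u,\bar\theta,j)=0$, so $E_\mu$ leaves the $u$-component fixed. Its first component is $h(\Delta_i\bar u,\bar\theta,i)$; then $(E_\mu[\bar\theta;\bar u])_{\theta}^i=\bar\theta^i-\mu h(\Delta_i\bar u,\bar\theta,i)$. Now apply $P$: since $h(\Delta_i\bar u,\bar\theta,i)\ge\bar k$ with equality whenever $\bar\theta^i>0$, the vector $\bar\theta^i-\mu h(\Delta_i\bar u,\bar\theta,i)$ equals $\bar\theta^i-\mu\bar k$ on the support of $\bar\theta$ and is $\le-\mu\bar k\le \bar\theta^i - \mu\bar k$ off the support (it may be negative). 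Choosing the shift $\xi=\mu\bar k$ in the definition of $\varpi$, we get $\varpi(\cdot)_i=(\bar\theta^i-\mu\bar k+\mu\bar k)^+=(\bar\theta^i)^+=\bar\theta^i$ on the support, while off the support $(\,{\le}-\mu\bar k\,+\,\mu\bar k)^+=0=\bar\theta^i$; and $\sum_i\varpi(\cdot)_i=\sum_i\bar\theta^i=1$, so this $\xi$ is the (unique) admissible shift. Hence $P E_\mu[\bar\theta;\bar u]=[\bar\theta;\bar u]$. For (ii), reverse this: if $[\bar\theta;\bar u]$ is a fixed point, the $u$-equation forces $\sum_j\bar\theta^j\alpha^*_i(\Delta_j\bar u,\bar\theta,j)=0$, and the $\theta$-equation says $(\bar\theta^i-\mu h(\Delta_i\bar u,\bar\theta,i)+\xi)^+=\bar\theta^i$ for the mass-normalizing $\xi$; set $\bar k=\xi/\mu$. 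Where $\bar\theta^i>0$ this gives $h(\Delta_i\bar u,\bar\theta,i)=\xi/\mu=\bar k$; where $\bar\theta^i=0$ it gives $(\,{-}\mu h+\xi)^+=0$, i.e. $h(\Delta_i\bar u,\bar\theta,i)\ge\xi/\mu=\bar k$. That is exactly \eqref{stat_mfg_ws}.

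For (iii), the key point is that $PE_\mu$ is a contraction on the set $\{\theta\ge 0,\ \sum_i\theta^i=1\}$ when $\mu$ is small. First I would note $P$ is a $1$-Lipschitz (nonexpansive) Euclidean projection onto the simplex (times all of $\Rr^d$ in the $u$-slot), as stated in the text. Then I would show $E_\mu$ is a strict contraction on that set: for $(\theta,u),(\tilde\theta,\tilde u)$ with $\theta,\tilde\theta$ probability vectors, the discrete analog of Proposition \ref{prop:contraction_sta_sol} gives $\|E_\mu[\theta;u]-E_\mu[\tilde\theta;\tilde u]\|^2 = \|[\theta;u]-[\tilde\theta;\tilde u]\|^2 - 2\mu\langle A[\theta;u]-A[\tilde\theta;\tilde u],[\theta;u]-[\tilde\theta;\tilde u]\rangle + \mu^2\|A[\theta;u]-A[\tilde\theta;\tilde u]\|^2$; the middle term is bounded above by $-2\mu(\gamma\|\theta-\tilde\theta\|^2+\sum_i\gamma_i(\theta^i+\tilde\theta^i)\|\Delta_i u-\Delta_i\tilde u\|^2)$ via Remark \ref{R1} (note the normalizing constants $k,\tilde k$ there are harmless since they pair against $\sum_i(\theta^i-\tilde\theta^i)=0$), and the $\mu^2$ term is controlled by local Lipschitz continuity of $h$ and $\alpha^*$ (Assumption \ref{A1}) on the bounded invariant set. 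Here is where I must be a little careful about the scope of the contraction: the $u$-component need not stay bounded a priori, so one should argue on the affine quotient by $\mathbf 1$ (using $\|\cdot\|_\sharp$), where $\alpha^*$ depends only on differences $\Delta_j u$; modulo this reduction the estimate gives, for $\mu$ small, $\|PE_\mu[\theta;u]-PE_\mu[\tilde\theta;\tilde u]\|\le(1-c\mu)\|[\theta;u]-[\tilde\theta;\tilde u]\|$ with $c>0$, at least in directions transverse to $\mathbf 1$. Applying this with $(\tilde\theta,\tilde u)=(\bar\theta,\bar u)$ a fixed point (which exists by hypothesis and part (i)) shows the iterates stay in a bounded set and $\|[\theta_n;u_n]-[\bar\theta;\bar u]\|_\sharp\to 0$ geometrically; pinning down the additive constant in $u$ (the component along $\mathbf 1$) then follows from the $u$-update itself, giving full convergence to $(\bar\theta,\bar u)$.

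\textbf{Main obstacle.} The genuine difficulty is not the algebra in (i)--(ii) but making the contraction in (iii) rigorous despite two issues: the $u$-variable is only controlled modulo constants (so the contraction is really in the $\|\cdot\|_\sharp$ seminorm, and one needs that $P$ and $E_\mu$ interact well with the quotient), and the $\mu^2$ remainder term requires an a priori bound on the iterates to invoke local Lipschitz constants — which is slightly circular. I would break the circularity by first establishing that the map is nonexpansive on the simplex for all small $\mu$ using only monotonicity (the $\mu^2$ term is dominated once $\mu\|A\|\le$ something), getting boundedness of the orbit, and only then upgrading to the strict $(1-c\mu)$ contraction on that bounded invariant set.
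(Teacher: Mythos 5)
Your proposal follows essentially the same route as the paper: the fixed-point characterization is the same algebra (the paper's proof of the converse direction is exactly your computation, setting $\bar k=\xi/\mu$ and reading off equality on $\{\bar\theta^i>0\}$ and the inequality elsewhere), and for the convergence part the paper likewise argues that $E_\mu$ is a contraction for $\mu$ small because $A$ is monotone and Lipschitz, that existence of a weak solution gives boundedness of the iterates, and that convergence then follows from strict monotonicity. Your extra care in part (iii) — working modulo constants in $u$ via $\|\cdot\|_\sharp$, noting that the normalizing constants pair against $\sum_i(\theta^i-\tilde\theta^i)=0$, and breaking the circularity between the $\mu^2$ remainder and the a priori bound — only adds detail where the paper's one-line justification is silent, so the two arguments coincide in substance.
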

\begin{proof}
Clearly, a weak solution of  \eqref{stat_mfg} is a fixed point for \eqref{iterp}. Conversely, let  $(\bar \theta, \bar u )$ be a fixed point for \eqref{iterp}. Then, 
\[
\bar u^i=\bar u^i+\mu \sum_{j} \bar \theta^j \alpha^*_i (\Delta_j \bar u, \bar \theta,j).
\]
Hence, \[\sum_{j} \bar \theta^j \alpha^*_i (\Delta_j \bar u, \bar \theta,j)=0.\]
Additionally, we have
\[
\bar \theta^i=\left(\bar \theta^i- \mu  h(\Delta_i \bar u, \bar \theta, i) +\xi\right)^+ \]
for some $\xi$. Thus, for $\bar k=\frac \xi \mu$, 
\[
        h(\Delta_i \bar u, \bar \theta, i) \geq   \bar k, \\
\]
with equality when $\bar \theta^i>0$.

If $\mu$ is small enough, $E_\mu$ is a contraction because $A$\ is a monotone Lipschitz map.  Thus,  if there is a solution of  \eqref{stat_mfg_ws}, the iterates in  \eqref{iterp} are bounded. Then, the convergence follows from the strict monotonicity of  $E_\mu$.      
\end{proof}

\subsection{Numerical examples}

To illustrate our algorithm, we consider the paradigm-shift problem. The monotone flow in \eqref{eq:aux_dyn_stat} is
                \begin{equation} \label{ums}
                        \begin{cases}
                                u^1_s = -\theta^1(u^1-u^2)^+ +\theta^2 (u^2-u^1)^+\\
                                u^2_s = \theta^1(u^1-u^2)^+ -\theta^2 (u^2-u^1)^+,
                        \end{cases}
                \end{equation}
                and
                \begin{equation} \label{tms}
                        \begin{cases}
                                \theta^1_s = -\theta^1 + \frac{1}{2}((u^1-u^2)^+)^2+k(s)\\
                                \theta^2_s = -\theta^2 + \frac{1}{2}((u^2-u^1)^+)^2+k(s). 
                        \end{cases}
                \end{equation}
                According to   \eqref{eq:k_aux},  
                \begin{equation*}
                        k(s) = \frac{1}{2} \lb \theta^1 - \frac{1}{2}((u^1-u^2)^+)^2
+  \theta^2 - \frac{1}{2}((u^2-u^1)^+)^2 \rb.
                \end{equation*}

 Now, we present the numerical results for this model using the iterative method in  \eqref{iterp}. 
{ We set $s\in[0,8]$ and discretize this interval into $N=300$ subintervals.} 
First, we consider the following initial conditions:
        \begin{equation*}
        u^1_0=4, \,  u^2_0=2  \text{ and } \theta^1_0= 0.8, \, \theta^2_0=0.2.
        \end{equation*}
        The convergence towards the stationary solution is illustrated in Figures \ref{fig:stat:theta} and   \ref{fig:stat:valuef} for $\theta$ and $u$.   The behavior of $k$ is shown in Figure \ref{fig:stat:const}.
        In Figure
\ref{fig:L2_norm}, we illustrate the contraction of the norm
\[
\left\|
\left[
\begin{array}{c}
\theta(s)\\
u(s)
\end{array}
\right]
-
\left[
\begin{array}{c}
\bar \theta\\
\bar u
\end{array}
\right]
\right\|, 
\]
where $(\bar \theta, \bar u)$ is the stationary solution in \eqref{ssf}.
        \begin{figure}
        	\centering
        	\begin{subfigure}{0.49\textwidth}
        		\centering
        		\includegraphics[width=1\linewidth]{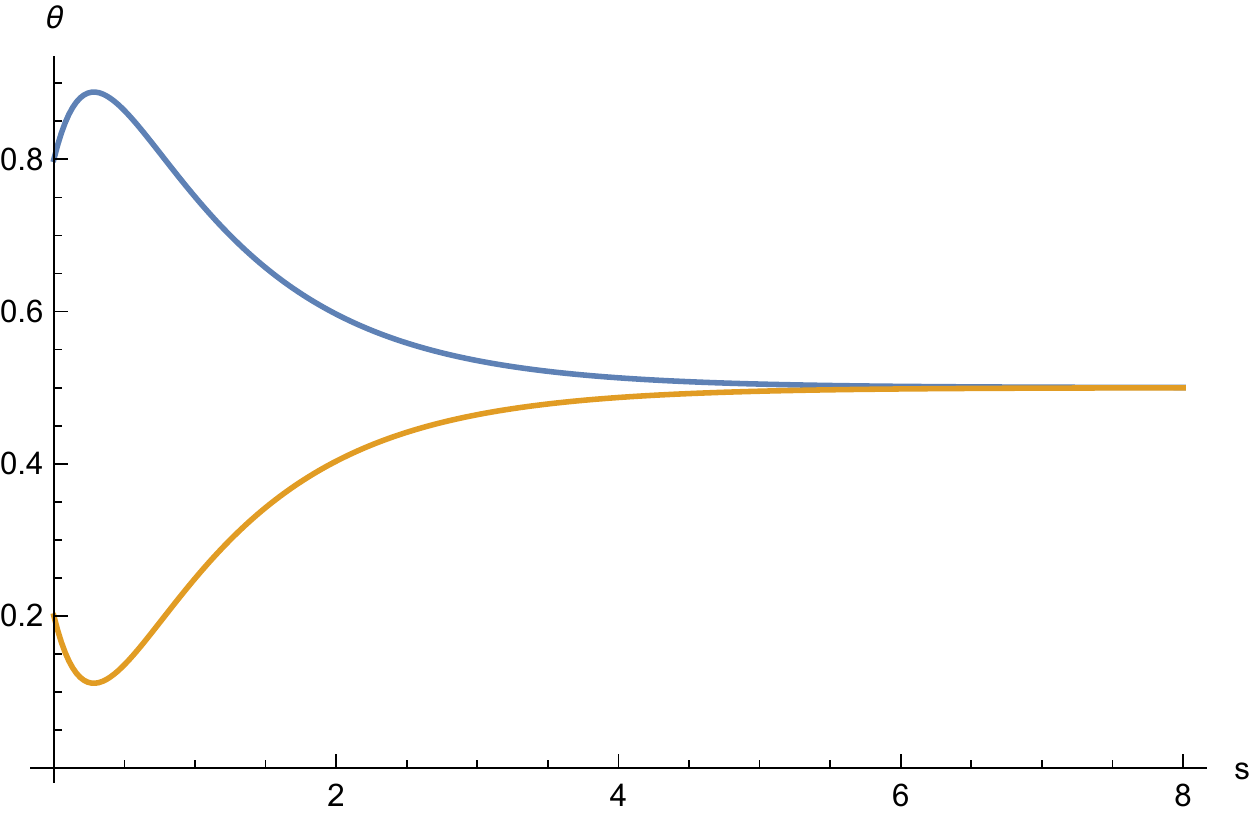}
        		\caption{Convergence of  $\theta^i$.}
        		\label{fig:stat:theta}
        	\end{subfigure}
        	\begin{subfigure}{0.49\textwidth}
        		\centering
        		\includegraphics[width=1\linewidth]{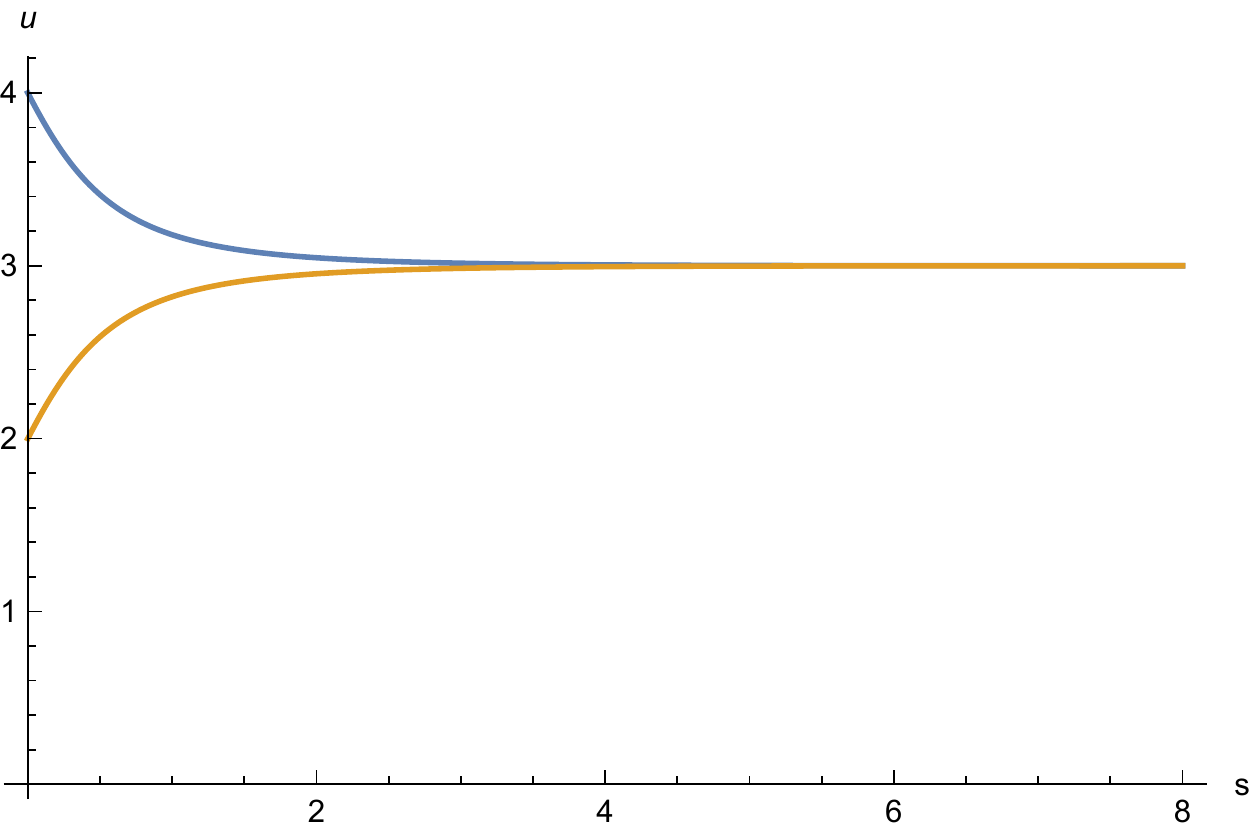}
        		\caption{Convergence of  $u^i$.}
        		\label{fig:stat:valuef}
        	\end{subfigure}
        	\caption{Evolution of $\theta$ and $u$ with the monotone flow, for $s \in [0,8]$.}
        \end{figure}      
                \begin{figure}
                \centering
                \begin{subfigure}{0.5\textwidth}
                        \centering
                        \includegraphics[width=1\linewidth]{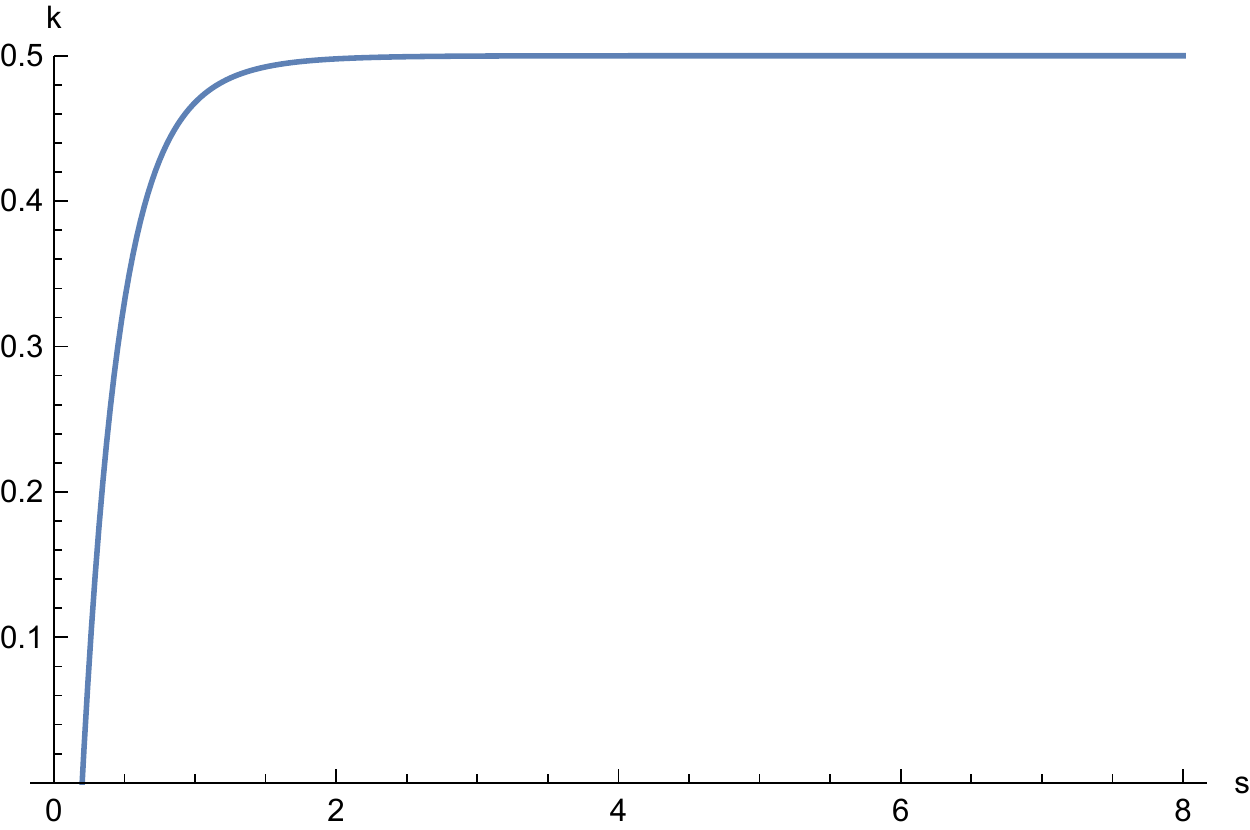}
                        \caption{Evolution of $k$.}
                        \label{fig:stat:const}
                \end{subfigure}
                \begin{subfigure}{0.5\textwidth}
                        \centering
                        \includegraphics[width=1\linewidth]{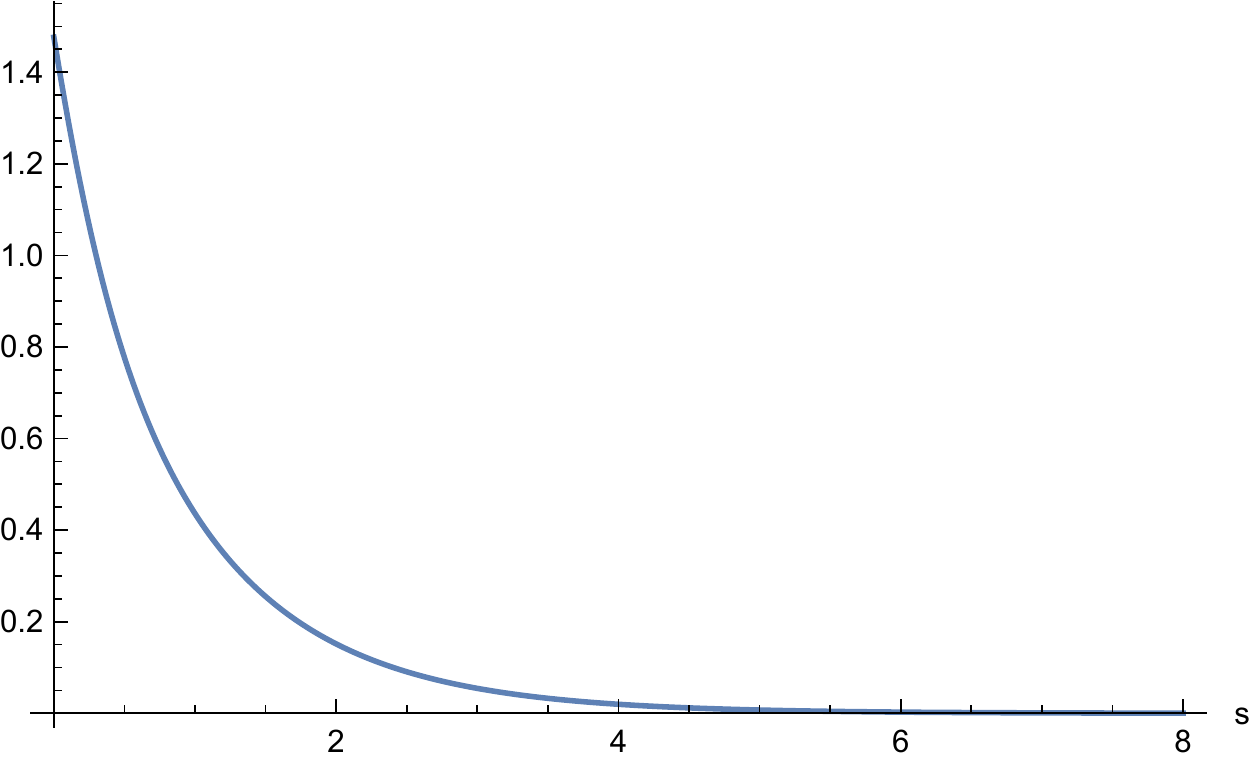}
                        \caption{Contraction of the norm.}
                        \label{fig:L2_norm}
                \end{subfigure}
                \caption{Evolution of $k$ and norm contraction, $\|(\theta,u)-(\bar\theta,\bar u)\|$.}
                \label{fig:k_L2_norm}
        \end{figure}
        Next, we consider the case where the iterates of $E_\mu$ do not preserve positivity.  
        In Figure \ref{fig:Positivity_comp}, we compare the evolution of $\theta$ by iterating  $E_\mu$, without the projection, and using \eqref{iterp}.
        In the first case,  $\theta$ may not remain positive, although, in this example, convergence holds. In Figure \ref{fig:Positivity_comp},
        we plot the evolution through  \eqref{iterp} of $\theta$ towards the analytical solution $\theta^1=\theta^2=0.5$.
        \begin{figure}
                \centering
                \begin{subfigure}{0.5\textwidth}
                        \centering
                        \includegraphics[width=1\linewidth]{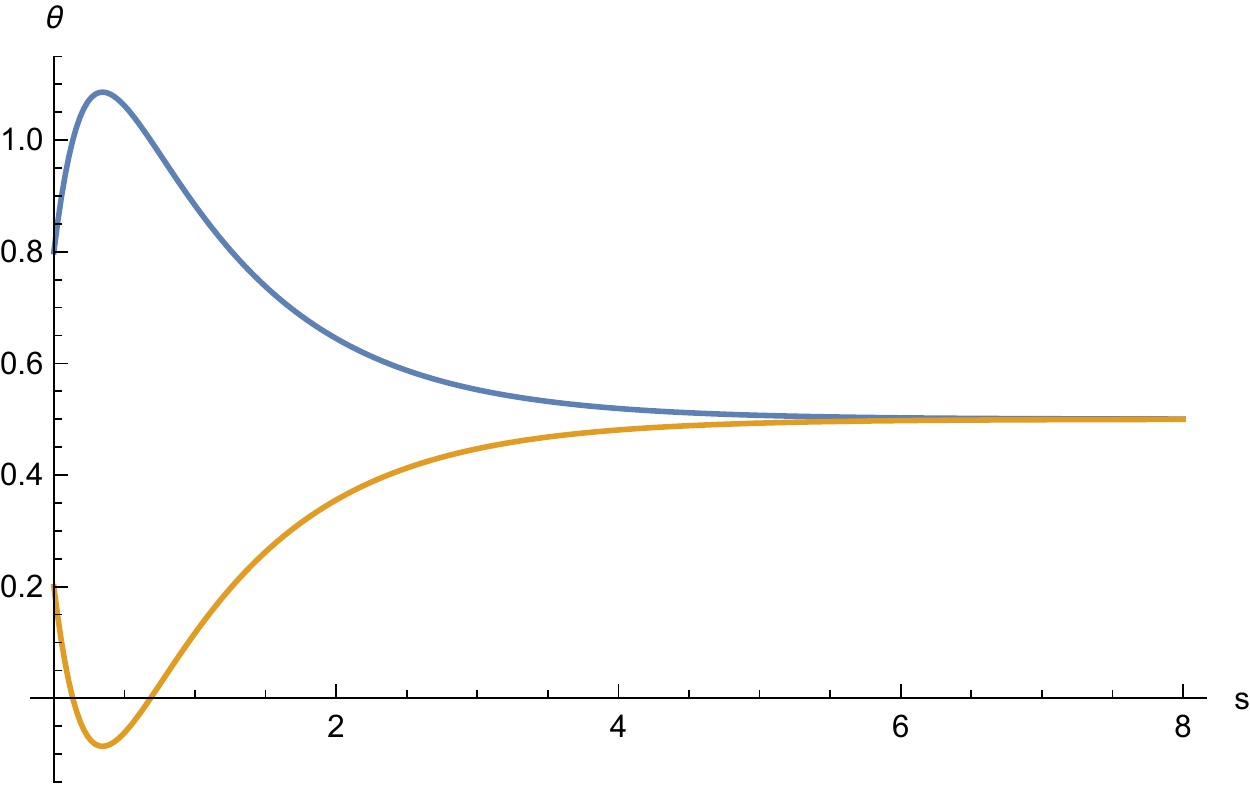}
                        \caption{Non positivity of the distribution using $E_\mu$.}
                \end{subfigure}
                \begin{subfigure}{0.5\textwidth}
                        \centering
                        \includegraphics[width=1\linewidth]{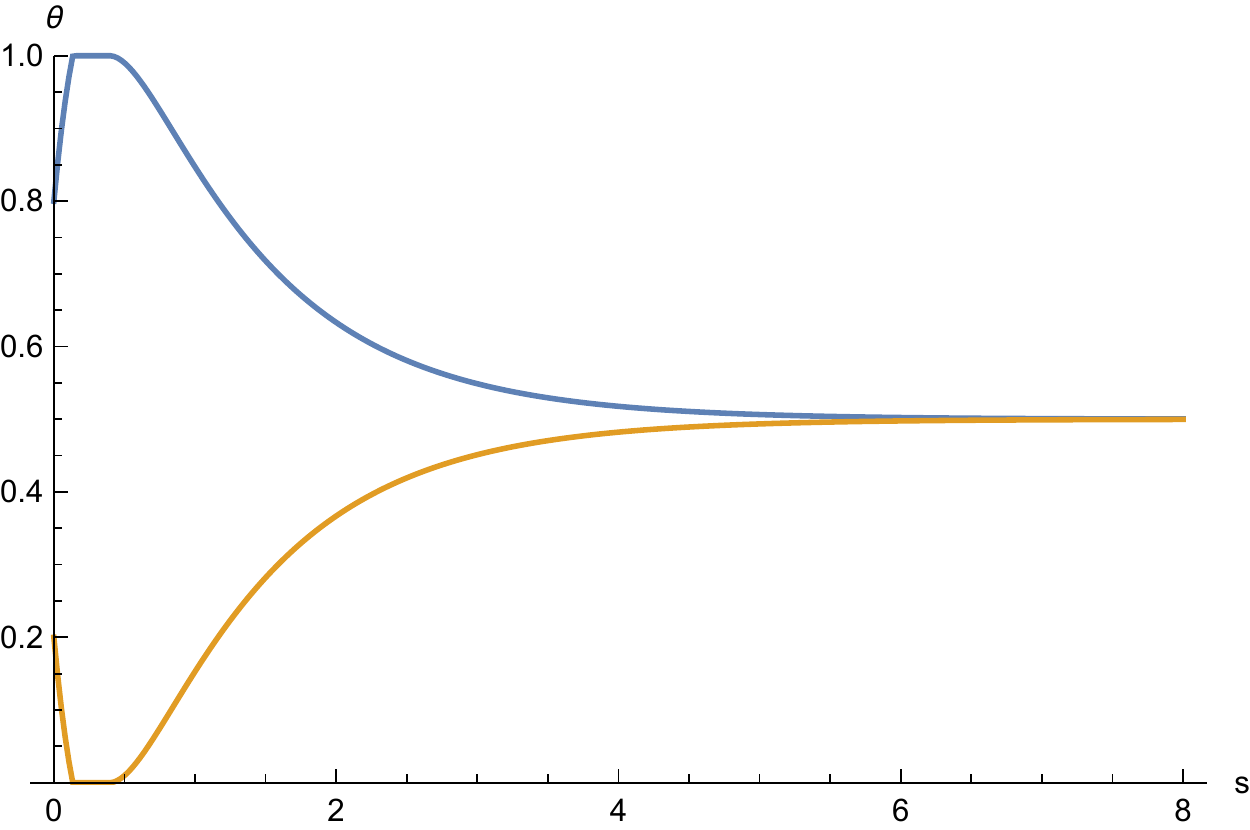}
                        \caption{Convergence using \eqref{iterp} while preserving the positivity of $\theta$.}
                \end{subfigure}
                \caption{Comparison between the iterates of $E_\mu$ and $PE_\mu$ for $\theta^1_0=0.8$, $\theta^2_0=0.2$, 
                        $u^1_0=5$, and $u^2_0=2$.}
                \label{fig:Positivity_comp}
        \end{figure}
        As expected from its construction, $\theta$ is always non-negative and a  probability. The contraction of the norm is similar to the 
        previous case, see Figure \ref{fig:k_L2_norm_proj}.
        \begin{figure}
                \centering
                \begin{subfigure}{0.5\textwidth}
                        \centering
                        \includegraphics[width=1\linewidth]{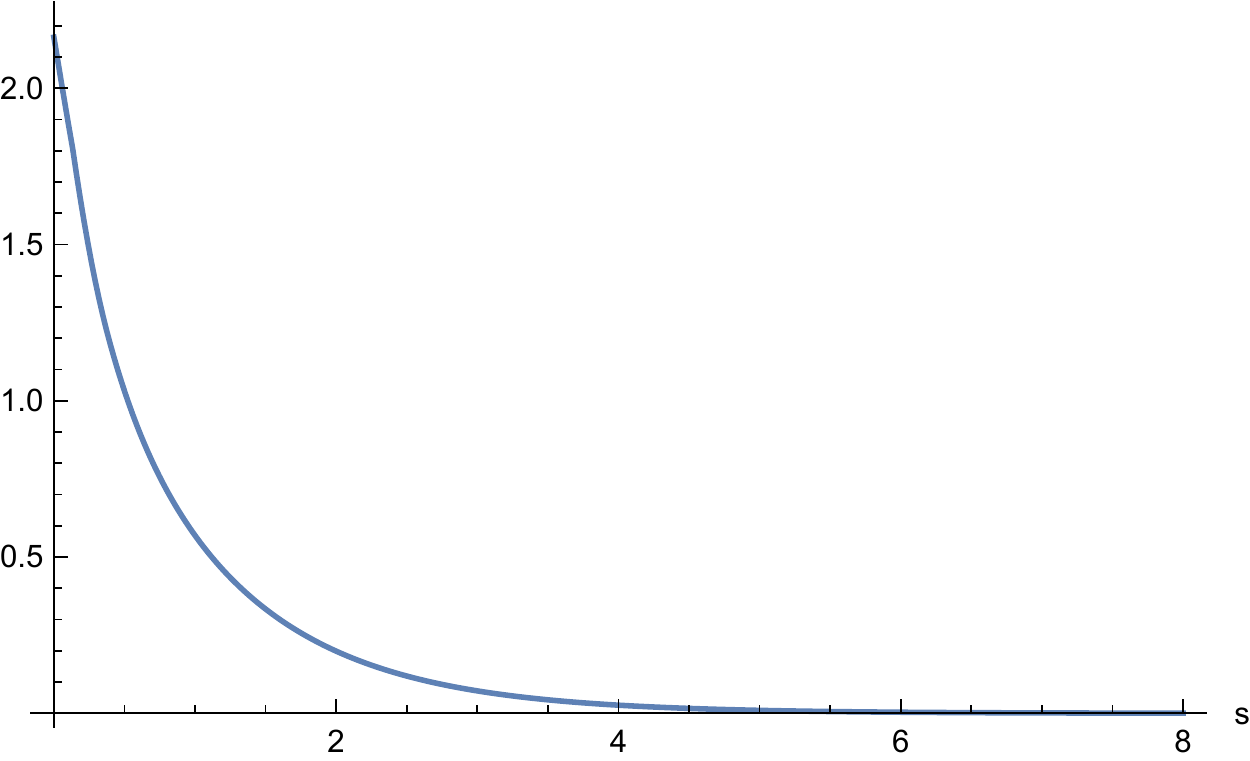}
                        \caption{Contraction of the norm.}
                        \label{fig:k_L2_norm_proj}
                \end{subfigure}
                \caption{Evolution of the norm, $\|(\theta,u)-(\bar \theta,\bar u)\|$, using the projection method.}
                \label{fig:k_L2_norm_proj}
        \end{figure}

\section{Initial-terminal value problems} 
\label{sec:mean_field_solution}
        The initial-terminal conditions in \eqref{itc} are 
         the key difficulty in the design of numerical methods for the time-dependent MFG, \eqref{MFM:cont_dyn}. 
        Here, we extend the strategy from the previous section to handle initial-terminal conditions. 
        We start with an arbitrary pair of functions, $(u(t,0),\theta(t,0))$, that satisfies \eqref{itc} and build a family 
        $(u(t,s),\theta(t,s))$, $s\geq 0$, that converges to a solution of \eqref{MFM:cont_dyn}-\eqref{itc} as $s\to \infty$,
        while preserving the boundary conditions for all $s\geq 0$.  
        \subsection{Representation of functionals in $H^1$} 
        \label{sub:representation_of_functionals_in_h_1}
                We begin by discussing the representation of linear functionals in $H^1$.
                Consider the Hilbert space $H^1_T=\{\phi \in H^1([0,T], \Rr^d): \phi(T)=0\}$.
                For $\theta, u\in H^1([0,T], \Rr^d)$, we consider the variational problem
                \begin{equation}
                \label{vp1}
                                \min_{\phi\in H^1_T} \int_0^T
                                \left[
                                \frac{1}{2}(|\phi|^2 + |\dot\phi|^2) + \phi \cdot \lb \theta_t - \sum_j \theta^j \alpha^*(\Delta_j u, \theta,j) \rb \right] dt
                 \end{equation}
                A minimizer, $\phi\in H^1_T$, of the preceding functional represents the linear functional
                \begin{equation*}
                        \eta\mapsto -\int_0^T\eta \cdot\lb \theta_t - \sum_j \theta^j \alpha^*(\Delta_j u, \theta,j) \rb dt
                \end{equation*}
                for $\eta\in H^1_T$,
                as an inner product in $H^1_T$; that is,
                                \begin{equation*}
                                        \int_0^T \lb \eta \cdot\phi + \dot \eta \cdot\dot \phi \rb dt = - \int_0^T \eta \cdot \lb \theta_t - \sum_j \theta^j \alpha^*(\Delta_j u, \theta,j) \rb dt
                                \end{equation*}
                for $\phi,\eta \in H^1_T$. 
                The last identity is simply the weak form of the Euler-Lagrange equation for \eqref{vp1},
                \begin{equation}\label{MFM:eq:ELeq_phi_cont_dyn}
                        - \ddot \phi + \phi = -\theta_t + \sum_j \theta^j \alpha^*(\Delta_j u, \theta,j),
                \end{equation}
                whose boundary conditions are $\phi(T)=0$ and $\dot \phi(0)=0$. For $\theta, u\in H^1([0,T], \Rr^d)$, we define
                \begin{equation}\label{Phi}
                        \Phi(\theta, u, t)=\phi(t).
                \end{equation}
                Next, 
             let $H^1_I=\{\psi \in H^1([0,T], \Rr^d): \psi(0)=0\}$.
                                For $\theta, u\in H^1([0,T], \Rr^d)$, we consider the variational problem
                \begin{equation}
                \label{vp2}
                                \min_{\psi\in H^1_I} \int_0^T \left[\frac{1}{2}(|\psi|^2 + |\dot\psi|^2) + \psi \cdot \lb u_t + h(\Delta_i u, \theta,i) \rb \right] dt.                \end{equation}
                The Euler-Lagrange equation for the preceding problem is 
                \begin{equation}\label{MFM:eq:ELeq_psi_cont_dyn}
                        - \ddot \psi + \psi = - u_t - h(\Delta_i u, \theta,i),
                \end{equation}
                with the boundary conditions $\psi(0)=0$ and $\dot \psi(T)=0$. Moreover, if $\psi\in H^1_I$ minimizes the functional in \eqref{vp2}, we have
                \begin{equation*}
                \int_0^T \lb\eta \cdot\psi + \dot \eta \cdot\dot \psi\rb dt = \int_0^T \eta \cdot\lb - u_t - h(\Delta_i u, \theta,i) \rb dt
                \end{equation*}
                for $\eta, \psi\in H^1_I$. 
                 For $\theta, u\in H^1([0,T], \Rr^d)$, we define
                \begin{equation}\label{Psi}
                        \Psi(\theta, u, t)=\psi(t).
                \end{equation}
        \subsection{Monotone deformation flow} 
        \label{sub:deformation_flow}
                Next, we introduce the monotone deformation flow,
                \begin{equation} \label{eq:in_term:aux_dyn}
                        \begin{cases}
                                u_s^i(t,s)              = \Phi^i(\theta(\cdot, s), u(\cdot, s), t)\\
                                \theta_s^i(t,s) = \Psi^i(\theta(\cdot, s), u(\cdot, s), t),                
                        \end{cases}
                \end{equation}
                where $\Phi$ and $\Psi$ are given in \eqref{Phi} and \eqref{Psi}.
                As we show in the next proposition, the previous flow is a contraction in $H^1$. 
                Moreover, if $(\theta, u)$ solve  \eqref{MFM:cont_dyn}-\eqref{itc}, we have
                \[
                	\Phi(\theta, u, t)=\Psi(\theta, u, t)=0. 
                \]
                Hence, solutions of  \eqref{MFM:cont_dyn}-\eqref{itc}  are fixed points for \eqref{eq:in_term:aux_dyn}. 

                Before stating the contraction property, we recall that the $H^1$-norm of a pair of functions is given by
                \[
                	\|(v, \eta)\|_{H^1}^2 = \int_0^T \lb |v|^2+|\dot v|^2+|\eta|^2+|\dot \eta|^2\rb dt
                \]
				for $v,\eta:[0,T]\to \Rr^d$. 
                \begin{pro}\label{contra1}
                        Let $(u, \theta)$ and $(\tilde u, \tilde \theta)$ solve \eqref{eq:in_term:aux_dyn}. Suppose $\theta, \tilde \theta \geq 0$. 
                        Then
                        \[
                        \frac{d}{ds}\|(u, \theta)-(\tilde u, \tilde \theta)\|^2_{H^1} \leq 0,
                        \]
                        with strict inequality if $(u, \theta)\neq (\tilde u, \tilde \theta)$.
                \end{pro}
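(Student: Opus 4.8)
The plan is to compute $\frac{d}{ds}\|(u,\theta)-(\tilde u,\tilde\theta)\|^2_{H^1}$ directly, exploiting the fact that the flow \eqref{eq:in_term:aux_dyn} was built precisely so that the $H^1$-inner products against the differences $u_s-\tilde u_s$ and $\theta_s-\tilde\theta_s$ collapse to $L^2$-pairings with the MFG operator. First I would write
\[
\frac{1}{2}\frac{d}{ds}\|(u,\theta)-(\tilde u,\tilde\theta)\|^2_{H^1}
= \big\langle (u-\tilde u, \theta-\tilde\theta),\ (u_s-\tilde u_s, \theta_s-\tilde\theta_s)\big\rangle_{H^1},
\]
and then substitute $u_s=\Phi(\theta,u,\cdot)$, $\tilde u_s=\Phi(\tilde\theta,\tilde u,\cdot)$, $\theta_s=\Psi(\theta,u,\cdot)$, $\tilde\theta_s=\Psi(\tilde\theta,\tilde u,\cdot)$. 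The key point is that $\Phi(\theta,u,\cdot)\in H^1_T$ and $u-\tilde u$ need not lie in $H^1_T$; but the \emph{difference} of value functions, $u-\tilde u$, satisfies $u(T)-\tilde u(T)=\bar u_T-\bar u_T=0$, so $u-\tilde u\in H^1_T$, and likewise $\theta-\tilde\theta\in H^1_I$ since $\theta(0)-\tilde\theta(0)=\bar\theta_0-\bar\theta_0=0$. This is where the initial-terminal boundary conditions enter decisively, and it is the reason the construction is tailored to \eqref{itc}.

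Next I would invoke the defining weak identities. Using $\eta = u-\tilde u\in H^1_T$ as a test function in the weak Euler–Lagrange characterization \eqref{MFM:eq:ELeq_phi_cont_dyn}, and subtracting the analogous identity for $(\tilde\theta,\tilde u)$, I get
\[
\int_0^T\!\big((u-\tilde u)\cdot(\Phi-\tilde\Phi)+(\dot u-\dot{\tilde u})\cdot(\dot\Phi-\dot{\tilde\Phi})\big)dt
= -\int_0^T (u-\tilde u)\cdot\Big(\theta_t-\tilde\theta_t-\textstyle\sum_j\big(\theta^j\alpha^*(\Delta_j u,\theta,j)-\tilde\theta^j\alpha^*(\Delta_j\tilde u,\tilde\theta,j)\big)\Big)dt,
\]
and symmetrically, testing \eqref{MFM:eq:ELeq_psi_cont_dyn} with $\eta=\theta-\tilde\theta\in H^1_I$,
\[
\int_0^T\!\big((\theta-\tilde\theta)\cdot(\Psi-\tilde\Psi)+(\dot\theta-\dot{\tilde\theta})\cdot(\dot\Psi-\dot{\tilde\Psi})\big)dt
= \int_0^T (\theta-\tilde\theta)\cdot\Big(-(u_t-\tilde u_t)-\big(h(\Delta_i u,\theta,i)-h(\Delta_i\tilde u,\tilde\theta,i)\big)\Big)dt.
\]
Adding these two identities gives $\tfrac12\frac{d}{ds}\|\cdot\|^2_{H^1}$ on the left. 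On the right, the two terms $-\int (u-\tilde u)\cdot(\theta_t-\tilde\theta_t)\,dt$ and $-\int(\theta-\tilde\theta)\cdot(u_t-\tilde u_t)\,dt$ combine to $-\int_0^T \frac{d}{dt}\big((u-\tilde u)\cdot(\theta-\tilde\theta)\big)dt$, which equals $-\big[(u-\tilde u)\cdot(\theta-\tilde\theta)\big]_0^T=0$, again because $u(T)-\tilde u(T)=0$ and $\theta(0)-\tilde\theta(0)=0$. This cancellation is the crux of the argument and the step I expect to be the main obstacle to state cleanly — one must be careful that $u,\tilde u,\theta,\tilde\theta$ are regular enough in $t$ for the integration by parts, which holds since they lie in $H^1([0,T],\Rr^d)$ by hypothesis.

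What remains is exactly the pointwise-in-$t$ quantity
\[
\int_0^T\!\Big[\textstyle\sum_i(u^i-\tilde u^i)\big(\sum_j\theta^j\alpha^*(\Delta_j u,\theta,j)-\sum_j\tilde\theta^j\alpha^*(\Delta_j\tilde u,\tilde\theta,j)\big)
+\sum_i(\theta^i-\tilde\theta^i)\big(-h(\Delta_i u,\theta,i)+h(\Delta_i\tilde u,\tilde\theta,i)\big)\Big]dt,
\]
where I have inserted and cancelled arbitrary constants $k=\tilde k=0$ inside the second bracket (legitimate since $\sum_i(\theta^i-\tilde\theta^i)=1-1=0$). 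By Remark \ref{R1}, the integrand is bounded above by $-\gamma\|(\theta-\tilde\theta)(t)\|^2-\sum_i\gamma_i(\theta^i+\tilde\theta^i)(t)\|(\Delta_i u-\Delta_i\tilde u)(t)\|^2\le 0$, using $\theta,\tilde\theta\ge0$. Hence $\frac{d}{ds}\|(u,\theta)-(\tilde u,\tilde\theta)\|^2_{H^1}\le 0$. For the strict inequality when $(u,\theta)\neq(\tilde u,\tilde\theta)$: if the right-hand side vanishes then $\theta\equiv\tilde\theta$ a.e.\ (from the $-\gamma\|\theta-\tilde\theta\|^2$ term, $\gamma>0$); feeding this back, the $L^2$-pairings on the right of the two identities become $-\int(u-\tilde u)\cdot(\Delta\text{-terms})$ which by the concavity part of Assumption \ref{A2} forces $\Delta_i u=\Delta_i\tilde u$ wherever $\theta^i+\tilde\theta^i>0$, and then $h(\Delta_i u,\theta,i)=h(\Delta_i\tilde u,\tilde\theta,i)$, so $\Phi=\tilde\Phi$ and $\Psi=\tilde\Psi$, whence $\|(u,\theta)-(\tilde u,\tilde\theta)\|_{H^1}$ is constant; combined with $u-\tilde u\in H^1_T$, $\theta-\tilde\theta\in H^1_I$ and the structure of the flow one concludes $u\equiv\tilde u$ as well, a contradiction. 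I would present this last strictness argument briefly, as it is the only part that is not a direct quotation of Remark \ref{R1}.
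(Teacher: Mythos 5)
Your proof is correct and follows essentially the same route as the paper's: testing the weak Euler--Lagrange identities behind \eqref{MFM:eq:ELeq_phi_cont_dyn} and \eqref{MFM:eq:ELeq_psi_cont_dyn} with $u-\tilde u\in H^1_T$ and $\theta-\tilde\theta\in H^1_I$ is just the paper's integration by parts (with boundary terms killed by \eqref{itc} and the natural conditions $\dot\phi(0)=0$, $\dot\psi(T)=0$) rewritten in weak form, and both arguments conclude with Remark \ref{R1}. You additionally make explicit the cancellation $\int_0^T \tfrac{d}{dt}\big((u-\tilde u)\cdot(\theta-\tilde\theta)\big)\,dt=0$ that the paper leaves implicit, and your sketch of the strict-inequality case is no less complete than the paper's, which stops at the displayed estimate.
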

                \begin{proof}
                	{We have
					\begin{equation*}
						\begin{split}
							\frac{1}{2} \frac{d}{ds} \int_0^T \left[ (u-\tilde u)^2 + (u-\tilde u)^2_t + (\theta-\tilde \theta)^2 + (\theta - \tilde \theta)^2_t \right]dt\\
							= \int_0^T \left[(u-\tilde u) (u-\tilde u)_s + (u-\tilde u)_t (u-\tilde u)_{ts}\right] dt\\
							+ \int_0^T \left[ (\theta-\tilde \theta)(\theta-\tilde \theta)_s  + (\theta-\tilde \theta)_t (\theta-\tilde \theta)_{ts}\right] dt.
						\end{split}
					\end{equation*}
					Using \eqref{eq:in_term:aux_dyn},  the term in the right-hand side of previous equality becomes
					\begin{equation*}
						\begin{split}
							\int_0^T \left[(u-\tilde u) (\phi-\tilde \phi) + (u-\tilde u)_t (\phi-\tilde \phi)_t \right] dt\\
							 + \int_0^T \left[  (\theta-\tilde \theta) (\psi - \tilde \psi) 
							 + (\theta-\tilde \theta)_t(\psi-\tilde \psi)_t\right] dt\\
							 = \int_0^T (u-\tilde u) (\phi-\tilde \phi) dt + \left.\left[(u-\tilde u)(\phi-\tilde \phi)_t \right]\right|_0^T \\
							 - \int_0^T (u-\tilde u)(\phi- \tilde \phi)_{tt} dt \\
							 + \int_0^T (\theta-\tilde \theta) (\psi-\tilde \psi)dt + \left.\left[(\theta-\tilde \theta)(\psi-\tilde \psi)_t \right]\right|_0^T \\
							 - \int_0^T (\theta-\tilde \theta)(\psi-\tilde \psi)_{tt} dt,
						\end{split}
					\end{equation*}
					where we used integration by parts in the last equality.}
                        Because $u(T)= \tilde u(T)$, $\theta(0) = \tilde \theta(0)$, $\phi_t(0) = \tilde \phi_t(0)$,  $\psi_t(T) = \tilde \psi_t(T)$, 
                        and using \eqref{MFM:eq:ELeq_phi_cont_dyn} and \eqref{MFM:eq:ELeq_psi_cont_dyn}, we obtain
                \begin{align} \label{itvp:norm_h1}
					\frac{1}{2} \frac{d}{ds} &\int_0^T (u-\tilde u)^2 + (u-\tilde u)^2_t + (\theta-\tilde \theta)^2 + (\theta - \tilde \theta)^2_t \nonumber\\
                	&= \int_0^T (u-\tilde u) \lb \sum \theta^j \alpha^*(\Delta_j u, \theta, j) - \sum \tilde \theta \alpha^*(\Delta_j \tilde u, \tilde \theta,j) \rb \nonumber \\
                    &- \int_0^T  (\theta- \tilde \theta) \lb h(\Delta_i u, \theta,i) - h(\Delta_i \tilde u,\tilde \theta,i ) \rb\\
                    & \leq \int_0^T -\gamma \|(\theta- \tilde \theta)(t) \|^2 - \sum_{i=1}^d \gamma_i (\theta^i + \tilde \theta^i)(t) \| (\Delta_i u - \Delta_i \tilde u)(t) \|^2 dt, \nonumber
                \end{align}
                due to Remark \ref{R1}.
                \end{proof}

        \subsection{Monotone discretization} 
        \label{sub:monotone_discretization}
        To build our numerical method, we begin by discretizing \eqref{eq:in_term:aux_dyn}. 
        We look for a time-discretization of
        \begin{equation*}
                A\left[\begin{array}{c}
                \theta\\
                u
                \end{array}\right]
                =
                \left[\begin{array}{c}
                -\theta_t + f(u,\theta)\\
                -u_t - h(u,\theta)
                \end{array}\right]
        \end{equation*}
        that preserves monotonicity, {where $f(u,\theta)=\sum_j \tilde \theta^j \alpha^*(\Delta_j \tilde u, \tilde \theta,j)$.}
                
        For Hamilton-Jacobi equations, implicit schemes have good stability properties. 
		Because the Hamilton-Jacobi equation in \eqref{eq:in_term:aux_dyn} is a terminal value problem, we discretize it using an explicit forward in time scheme (hence, implicit backward in time). 
		Then, to keep the adjoint structure of $A$ at the discrete level, we are then required to choose an implicit discretization forward in time for the first component of $A$.
        Usually, implicit schemes have the disadvantage of requiring the numerical solution of non-linear equations at each time-step. 
		Here, we discretize the operator $A$ globally, and we never need to solve implicit equations. 
        
        More concretely, we split $[0,T]$ into $N$ intervals of length  $\delta t=\frac T N$. 
        The vectors $\theta_n\in \Ss^d$ and $u_n\in \Rr^d$, $0\leq n\leq N$ approximate $\theta$ and $u$ at time $\frac{nT}N$. 
        We set $\mathcal{M}_N=(\Ss^d\times \Rr^{d})^{N+1}$ and define
        \begin{equation} \label{disc_probl}
        A^N\left[\begin{array}{c}
        \theta\\
        u
        \end{array}\right]_n
        =
        \left[\begin{array}{c}
         -\frac{\theta_{n+1}^i-\theta_n^i}{\delta t}+f(u_{n+1}^i,\theta_{n+1}^i) + k_n\\ 
         -\frac{u_{n+1}^i-u_n^i}{\delta t}-h(u_{n}^i,\theta_{n}^i)
        \end{array}\right],
        \end{equation}
        where 
        \begin{equation*}
                k_n(s) = -\frac{1}{d} \sum_{i=1}^d \lb -\frac{\delta \theta^i_n}{\delta t} + f(u_{n+1}^i,\theta_{n+1}^i)\rb
        \end{equation*}
                and $\delta \theta^i_n = \theta_{n+1}^i-\theta_n^i$.
Next, we show that $A^N$ is a monotone operator in the convex subset of vectors in $\mathcal{M}$ that satisfy the initial-terminal conditions in \eqref{itc}.

\begin{pro}
$A^N$ is monotone in the convex subset 
 $\mathcal{M}_N$ of all $(\theta, u)\in (\Ss^d\times \Rr^d)^{N+1}$
such that $\theta_0=\bar \theta_0$ and $u_{N}=\bar u_T$.  Moreover, we have the inequality
\begin{align*}
&\left\langle A^N\left[\begin{array}{c}
\theta\\
u
\end{array}\right]-A^N\left[\begin{array}{c}
\tilde\theta\\
\tilde u
\end{array}\right],\left[\begin{array}{c}
\theta\\
u
\end{array}\right]-\left[\begin{array}{c}
\tilde\theta\\
\tilde u
\end{array}\right]\right\rangle
\\
&\leq \sum_{n=1}^{N-1} \lb -\gamma \|(\theta- \tilde \theta)(t) \|^2 - \sum_{i=1}^d \gamma_i (\theta^i + \tilde \theta^i)(t) \| (\Delta_i u - \Delta_i \tilde u)(t) \|^2 \rb.
\end{align*}    
\end{pro}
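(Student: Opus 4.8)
The plan is to follow the proof of Proposition~\ref{contra1}, but with integration by parts in time replaced by its discrete (Abel summation) analogue, and with the nonlinear part reduced to Remark~\ref{R1} applied node by node. Write $w_n=\theta_n-\tilde\theta_n$ and $v_n=u_n-\tilde u_n$. First I expand the bilinear form $\langle A^N[\theta;u]-A^N[\tilde\theta;\tilde u],\,[\theta;u]-[\tilde\theta;\tilde u]\rangle$ as a double sum over the time-nodes $n$ and the state index $i$, and separate it into the part coming from the discrete time-differences $\tfrac{(\cdot)_{n+1}-(\cdot)_n}{\delta t}$ and the part coming from the nonlinear terms $f$, $h$ together with the scalars $k_n-\tilde k_n$.

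For the discrete-time-difference part I use summation by parts. The discretization \eqref{disc_probl} was built with a compatible staggering of its two components --- explicit (backward in time) for the Hamilton--Jacobi block and implicit (forward in time) for the Kolmogorov block --- precisely so that the cross time-difference terms telescope, e.g.\ via an identity of the form $\sum_{n=0}^{N-1}\big[(w_{n+1}-w_n)v_{n+1}+(v_{n+1}-v_n)w_n\big]=w_Nv_N-w_0v_0$. On $\mathcal{M}_N$ both boundary terms vanish: $\theta_0=\tilde\theta_0=\bar\theta_0$ forces $w_0=0$, and $u_N=\tilde u_N=\bar u_T$ forces $v_N=0$. Hence this part contributes $0$ (or, at worst, a non-positive ``dissipation'' remainder, should the staggering leave one).

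For the remaining part, after the above reduction the surviving terms at each node $n$ are exactly the left-hand side of the inequality in Remark~\ref{R1} with $(\theta,u,k)=(\theta_n,u_n,k_n)$ and $(\tilde\theta,\tilde u,\tilde k)=(\tilde\theta_n,\tilde u_n,\tilde k_n)$; in particular the $k_n$-terms enter only through $\sum_i(\theta_n^i-\tilde\theta_n^i)(k_n-\tilde k_n)$, which vanishes since $\theta_n,\tilde\theta_n\in\Ss^d$ --- exactly the role for which the $k_n$ in \eqref{disc_probl} were introduced, mirroring \eqref{eq:k_aux} and Proposition~\ref{prop:contraction_sta_sol}. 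The nodes $n=0$ and $n=N$ do not contribute, because of the same initial/terminal constraints used above and the index range of $A^N$. Applying Remark~\ref{R1} for $n=1,\dots,N-1$ then bounds this part, hence the whole bilinear form, by $\sum_{n=1}^{N-1}\big(-\gamma\|w_n\|^2-\sum_i\gamma_i(\theta_n^i+\tilde\theta_n^i)\|\Delta_iu_n-\Delta_i\tilde u_n\|^2\big)$, which is the asserted estimate; in particular it is $\le 0$, so $A^N$ is monotone on $\mathcal{M}_N$.

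The main obstacle is the discrete summation by parts: one must verify that the precise index placement in \eqref{disc_probl}, together with the exact definition of the inner product on $\mathcal{M}_N$ (which time-node of the test vector is paired with $A^N[\cdot]_n$), makes the discrete integration by parts close with boundary terms that are exactly the ones annihilated by the constraints $\theta_0=\bar\theta_0$ and $u_N=\bar u_T$, and with no leftover term of indefinite sign. Once this is in place, the vanishing of the $k_n$-contributions and the node-by-node application of Remark~\ref{R1} are routine.
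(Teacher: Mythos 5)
Your proposal is correct and follows essentially the same route as the paper's proof: expand the bilinear form, kill the cross time-difference terms by discrete summation by parts using $\theta_0=\tilde\theta_0$ and $u_N=\tilde u_N$ (your telescoping identity is exactly the computation the paper carries out with $a_n=\theta_n-\tilde\theta_n$, $b_n=u_n-\tilde u_n$), and bound the remaining nonlinear terms for $n=1,\dots,N-1$ node by node via Remark \ref{R1}, which already absorbs the $k_n$-terms. The index-staggering issue you flag is resolved in the paper exactly as you anticipate, by pairing the Kolmogorov block of $A^N[\cdot]_n$ with $u_{n+1}-\tilde u_{n+1}$ and the Hamilton--Jacobi block with $\theta_n-\tilde\theta_n$.
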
       
\begin{proof}
We begin by computing   
\begin{align*}
&\left\langle A^N\left[\begin{array}{c}
\theta\\
u
\end{array}\right]-A^N\left[\begin{array}{c}
\tilde\theta\\
\tilde u
\end{array}\right],\left[\begin{array}{c}
\theta\\
u
\end{array}\right]-\left[\begin{array}{c}
\tilde\theta\\
\tilde u
\end{array}\right]\right\rangle\\
                &=\sum_{n=0}^{N-1} (\theta_n - \tilde \theta_n) \left(-\frac{u_{n+1}- u_{n}}{\delta t} - h(u_{n},\theta_{n}) 
                + \frac{\tilde u_{n+1}-\tilde u_{n}}{\delta t} + h(\tilde u_{n},\tilde \theta_{n}) \right)\\
                &+ (u_{n+1}-\tilde u_{n+1}) \Big( -\frac{\theta_{n+1}-\theta_{n}}{\delta t} + f(u_{n+1},\theta_{n+1}) + k_{n}\\
                &+ \frac{\tilde \theta_{n+1}-\tilde\theta_{n}}{\delta t} - f(\tilde u_{n+1},\tilde \theta_{n+1}) - \tilde k_n \Big).
        \end{align*}
        Developing the sums and relabeling the indices,
        the preceding expression becomes
        \begin{align*}
                &\sum_{n=1}^{N-1} (\theta_n - \tilde \theta_n) \left(-\frac{u_{n+1}- u_{n}}{\delta t} - h(u_{n},\theta_{n}) 
                        + \frac{\tilde u_{n+1}-\tilde u_{n}}{\delta t} + h(\tilde u_{n},\tilde \theta_{n}) \right)\\
                        &+ (\theta_0-\tilde \theta_0) \left(- \frac{u_1-u_0}{\delta t} - h(u_0,\theta_0) + \frac{\tilde u_1 -\tilde u_0}{\delta t} +h(\tilde u_0,\theta_0) \right)\\
            &+\sum_{n=1}^{N-1} (u_n-\tilde u_n) \lb -\frac{\theta_{n}-\theta_{n-1}}{\delta t} + f(u_{n},\theta_{n})
                                + \frac{\tilde \theta_{n}-\tilde\theta_{n-1}}{\delta t} - f(\tilde u_{n},\tilde \theta_{n})   \rb\\
                        &+(u_{N}-\tilde u_{N})\lb-\frac{\theta_N-\theta_{N-1}}{\delta t} +f(u_N,\theta_N) + \frac{\tilde \theta_N-\tilde \theta_{N-1}}{\delta t}- f(\tilde u_N,\tilde \theta_N) \rb.
        \end{align*}
        The second and last lines above are zero since $\theta_0=\tilde \theta_0=\bar \theta_0$ and $u_{N}=\tilde u_{N}=\bar u_T$.
        Using Remark \ref{R1}, we obtain
        \begin{align*}
                                &\left\langle A\left[\begin{array}{c}
                                \theta\\
                                u
                                \end{array}\right]-A\left[\begin{array}{c}
                                \tilde\theta\\
                                \tilde u
                                \end{array}\right],\left[\begin{array}{c}
                                \theta\\
                                u
                                \end{array}\right]-\left[\begin{array}{c}
                                \tilde\theta\\
                                \tilde u
                                \end{array}\right]\right\rangle \\
                &\leq \sum_{n=1}^{N-1} \lb -\gamma \|(\theta- \tilde \theta)(t) \|^2 - \sum_{i=1}^d \gamma_i (\theta^i + \tilde \theta^i)(t) \| (\Delta_i u - \Delta_i \tilde u)(t) \|^2 \rb \\
                &- \sum_{n=1}^{N-1} (\theta_n-\tilde\theta_n) \lb \frac{u_{n+1}-\tilde u_{n+1}}{\delta t} - \frac{u_n-\tilde u_n}{\delta t} \rb\\
                &- \sum_{n=1}^{N-1} (u_n-\tilde u_n) \lb\frac{\theta_n -\tilde \theta_n}{\delta t}-\frac{\theta_{n-1}-\tilde \theta_{n-1}}{\delta t}\rb.
        \end{align*}
        We now show that the last two lines add to zero.
        Let $a_n = \theta_n-\tilde \theta_n$ and $b_n = u_n-\tilde u_n$.
                Accordingly, we have
        \begin{align*}
                &-\frac{1}{\delta t} \sum_{n=1}^{N-1} a_n(b_{n+1}-b_n) - \frac{1}{\delta t} \sum_{n=1}^{N-1} b_n (a_n-a_{n-1}) \\
                &=\frac{1}{\delta t} \sum_{n=1}^{N-1} b_{n+1}(a_{n+1}-a_n) - (b_N a_N - b_1 a_1) - \frac{1}{\delta t} \sum_{n=0}^{N-2} b_{n+1} (a_{n+1}-a_n)\\
                &=\frac{1}{\delta t}(b_1a_0-b_Na_{N-1}) = 0,
        \end{align*}
        where we summed by parts the first member and relabeled the index $n$ in the last member of the first line.
        The last equality follows from the assumption in the statement, $a_0=\theta_0-\tilde \theta_0 = 0$ and $b_N = u_N-\tilde u_N = 0$.     
\end{proof}     
	 Using the techniques in \cite{AFG}, we prove the convergence of  the solutions of the discretized problem as $\delta t \to 0$. As usual, we discretize the time interval, $[0,T]$, into $N+1$ equispaced points. 
	 \begin{pro}
		 Let $(\theta^N, u^N)\in \Mm_N$, be a solution of 
		 \begin{equation*}
			 A^N\left[
			 \begin{array}{c}
				 \theta^N\\
				 u^N
			 \end{array}
			 \right]_n 
			 = 
			 \left[
			 \begin{array}{c}
				 0\\
                 0
             \end{array}
			 \right]
		\end{equation*}
		satisfying the initial-terminal conditions in \eqref{itc}.
		Suppose $u^N$ is uniformly bounded. Consider the step functions, $\bar u^N$, $\bar \theta^N$, taking the values $\bar u^{N i}_n \in \Rr$ and $\bar{\theta}_n^{N i} \in \Ss$ in $[\frac{(n-1)T}{N},\frac{n T}{N}]$, 
		with $0\leq n \leq N$, for $i\in I_d$, respectively.
		Then, extracting a subsequence if necessary,  $\bar u^{N i} \rightharpoonup \bar u^i$ and $\bar \theta^{N i} \rightharpoonup \theta^i$, weakly-* in $L^\infty$ for $i \in I_d$. 
		Furthermore, $(\bar u, \bar \theta)$ is a weak solution of \eqref{MFM:cont_dyn}.
    \end{pro}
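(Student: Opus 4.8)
I would argue by compactness in three movements --- uniform a priori bounds, extraction of a convergent subsequence, and passage to the limit in the discrete relations --- keeping the monotonicity from Remark~\ref{R1} in reserve, to be used in place of the strong compactness if one wants to follow \cite{AFG} more literally. First, the a priori estimates. Since $(\theta^N_n,u^N_n)\in\Ss^d\times\Rr^d$, the vectors $\theta^N_n$ already range over the fixed compact simplex, while $u^N_n$, hence each $\Delta_i u^N_n$, is uniformly bounded by hypothesis. Writing $A^N[\theta^N;u^N]_n=0$ componentwise gives $\frac{u^i_{n+1}-u^i_n}{\delta t}=-h(u^i_n,\theta^i_n)$ and $\frac{\theta^i_{n+1}-\theta^i_n}{\delta t}=f(u_{n+1}^i,\theta_{n+1}^i)+k_n$. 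By Assumption~\ref{A1}, $h$ and $\alpha^*$ (hence $f$) are locally Lipschitz, so, their arguments lying in a fixed compact set, the right-hand sides are bounded uniformly in $N$; moreover $\sum_i(\theta^i_{n+1}-\theta^i_n)=0$ on $\Mm_N$ and $\alpha^*$ preserves mass, so $k_n=0$ on $\Mm_N$. Hence the difference quotients of $u^N$ and $\theta^N$ are bounded uniformly in $N$, and the piecewise-affine interpolants of $(u^N,\theta^N)$ are uniformly Lipschitz on $[0,T]$.

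By Arzel\`a--Ascoli, a subsequence of these interpolants converges uniformly to a Lipschitz pair $(\bar u,\bar\theta)$ with $\bar\theta(t)\in\Ss^d$; since the endpoint values are pinned, the constraints $\bar\theta(0)=\bar\theta_0$ and $\bar u(T)=\bar u_T$ pass to the limit. The step functions $\bar u^N,\bar\theta^N$ differ from the interpolants by $O(\delta t)$ in sup norm, so they converge uniformly, a fortiori weakly-$*$ in $L^\infty$, to the same limits $\bar u^i,\theta^i$; this is the asserted convergence. To identify the limit, note that the step functions assembled from the difference quotients are precisely the derivatives of the interpolants, and, being uniformly bounded with uniformly convergent primitives, they converge weakly-$*$ in $L^\infty$ to $\bar u_t$ and $\bar\theta_t$; at the same time the continuity of $h$ and $\alpha^*$ together with the uniform convergence of the arguments force the right-hand sides to converge uniformly to $-h(\bar u,\bar\theta)$ and $f(\bar u,\bar\theta)$, while $k_n=0$. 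Hence $(\bar u,\bar\theta)$ solves \eqref{MFM:cont_dyn} almost everywhere, together with \eqref{itc}, and in particular is a weak solution.

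To reproduce instead the argument of \cite{AFG}, one would keep only the $L^\infty$ bounds of the first step --- so that $(\theta^N,u^N)$ merely has a weakly-$*$ convergent subsequence --- discretize an arbitrary test pair $(\tilde\theta,\tilde u)\in H^1((0,T),\Ss^d)\times H^1((0,T),\Rr^d)$ satisfying \eqref{itc} by sampling it at the grid nodes, combine $A^N[\theta^N;u^N]=0$ with the monotonicity inequality of the preceding proposition to obtain the discrete analogue of the variational inequality $\langle A[\tilde\theta;\tilde u],[\tilde\theta-\theta;\tilde u-u]\rangle\ge0$ (the monotonicity defect having the right sign to be discarded), and then let $N\to\infty$, pairing the strong $L^2$ limit $A^N[\tilde\theta^N;\tilde u^N]\to A[\tilde\theta;\tilde u]$ (which uses that difference quotients of an $H^1$ function converge in $L^2$ to its derivative) against the weakly-$*$ convergent factor $[\tilde\theta^N-\theta^N;\tilde u^N-u^N]$. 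In either route the real work is bookkeeping rather than analysis: one must reconcile the discrete bilinear form and its $\delta t$-scaling, the index shifts $\theta_{n+1}\leftrightarrow\theta_n$ and $u_n\leftrightarrow u_{n+1}$, and the boundary terms, with the continuous pairing, and check the first-order consistency of the sampled test functions. The compactness and the limit of the nonlinear terms, usually the delicate points of such theorems, are comparatively easy here precisely because $u^N$ is assumed bounded and $\theta^N$ is confined to the compact simplex; that boundedness hypothesis on $u^N$ is the load-bearing assumption, and is exactly where an additional a priori estimate (as in the stationary case) would be needed to make the statement unconditional.
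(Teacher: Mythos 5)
Your proposal is correct, and your primary argument is genuinely different from the paper's. The paper's proof is exactly the ``second route'' you sketch at the end: it keeps only the weak-$*$ compactness coming from $\theta^N\in\Ss^d$ and the assumed bound on $u^N$, samples a smooth admissible test pair $(\tilde\theta,\tilde u)$ at the grid nodes, and uses $A^N[\theta^N;u^N]=0$ together with the monotonicity of $A^N$ on the affine slice $\theta_0=\bar\theta_0$, $u_N=\bar u_T$ to write $0\le\langle A^N[\tilde\theta^N;\tilde u^N],[\tilde\theta^N-\theta^N;\tilde u^N-u^N]\rangle = O(1/N)+\langle A[\tilde\theta;\tilde u],[\tilde\theta-\bar\theta^N;\tilde u-\bar u^N]\rangle$, then lets $N\to\infty$; the only analysis is the $O(1/N)$ consistency of the sampled test functions, and the conclusion is directly the variational inequality defining a weak solution. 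Your primary route instead extracts more from the scheme: since the right-hand sides of the discrete equations are evaluated at points of a fixed compact set ($\theta^N_n\in\Ss^d$, $u^N_n$ bounded, $h$ and $\alpha^*$ locally Lipschitz, and indeed $k_n=0$ on $\Mm_N$ because $\sum_i\delta\theta_n^i=0$ and $\sum_i\alpha^*_i(\cdot,\cdot,j)=0$ by \eqref{alfstar2}), the difference quotients are uniformly bounded, Arzel\`a--Ascoli gives uniform convergence of the piecewise-affine interpolants, and the limit solves \eqref{MFM:cont_dyn}--\eqref{itc} pointwise a.e.; it is then a weak solution by the monotonicity of $A$. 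This is sound and proves strictly more (a strong solution, with uniform rather than weak-$*$ convergence), at the price of using the explicit/implicit structure of the scheme and the Lipschitz regularity of $h$ in an essential way. The paper's monotonicity route is the one that survives when strong compactness is unavailable --- e.g.\ for discretizations of continuous-state MFGs, the setting of \cite{AFG} that motivates the weak-solution formulation here --- which is presumably why the authors chose it even though, under Assumptions \ref{A1}--\ref{A2}, your compactness argument also closes. Your observation that the uniform bound on $u^N$ is the load-bearing hypothesis applies equally to both routes.
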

    \begin{proof}
    	Because $u^N$ is bounded by hypothesis and $\theta^N$ is bounded since it is a probability measure, the weak-* convergence in $L^\infty$ is immediate. Hence,  there exist $\bar u^i \in L^\infty([0,T])$ and $\theta^i \in L^\infty([0,T])$ as claimed. 
		
    	Let $\tilde u^{i}, \tilde \theta^{i} \in C^{\infty}([0,T])$, with $\tilde \theta^{i} \geq 0$ for all $i\in I_d$, and $\sum_{i \in I_d} \tilde \theta^i =1$. Suppose further that $\tilde u^{i}, \tilde \theta^{i}$ satisfy the boundary conditions in \eqref{itc}.
		Let $\tilde u_n^{N} = \tilde u \lb \frac{n}{N}T \rb$, $\tilde \theta_n^{N} = \tilde \theta \lb \frac{n}{N}T \rb$ be the vectors whose components are $\tilde u_n^{N i}$ and
		$\tilde \theta_n^{N i}$, respectively.
		By the monotonicity of $A^N$, we have
		\begin{align*}
			0 &\leq \left\langle A^N\left[\begin{array}{c}
											\tilde \theta^N \\
                                			\tilde u^N
                                		 \end{array}
									 \right],
								\left[\begin{array}{c}
                                	\tilde \theta^N\\
                                	\tilde u^N
                                \end{array}\right]
								-\left[\begin{array}{c}
                                			\theta^N\\
                                			u^N
                                		\end{array}
								 \right]
					\right\rangle \\
				& = O \lb \frac{1}{N} \rb 
				  +\left\langle
				  		A^N \left[\begin{array}{c}
									\tilde\theta\\
                                	\tilde u
                                \end{array}\right],
						\left[\begin{array}{c}
									\tilde \theta\\
						            \tilde u
						      \end{array}
						\right]
						- \left[\begin{array}{c}
									\bar \theta^N\\
						            \bar u^N
						   		 \end{array}
						\right]
				   \right \rangle,
		\end{align*}
		taking the limit $N\to \infty$ gives the result. 
    \end{proof}

\subsection{Monotone discretization for the $H^1$ projections} 
\label{sub:monotone_discretization_for_the_h_1_projections}
        Next, we discuss the representation of linear functionals for the discrete problem. For that, proceeding as in Section \ref{sub:representation_of_functionals_in_h_1}, 
         we compute the optimality conditions of the discretized versions of \eqref{vp1} and  \eqref{vp2}.
        
        Fix $(u,\theta)\in \mathcal{M}_N$ and consider the following discrete analog to \eqref{vp1}:
    \begin{equation*}
                \min_{\phi\in \tilde H^1_T} \delta t \sum_{n=1}^{N} \frac{1}{2} \lb\phi_n^2 + \lb\frac{\delta \phi_{n-1}}{\delta t}\rb^2\rb 
            + \phi_n \lb \frac{\delta \theta_{n-1}}{\delta t} - f(u_{n},\theta_{n}) \rb,
     \end{equation*}
     where $\delta g_n = g_{n+1}-g_n$, and  $\tilde H^1_T =\{\phi=(\phi_0, \hdots, \phi_N)\in (\Rr^d)^{(N+1)}:\phi_N=0\}$.
     The corresponding optimality conditions (the discrete Euler-Lagrange equation) is 
     \begin{equation}\label{eq:disc_el_phi}
     -\frac{\delta(\delta \phi_{n-1})}{(\delta t)^2} + \phi_n = -\frac{\delta\theta_{n-1}}{\delta t} + f(u_{n},\theta_{n}),
     \end{equation}
     for $n=1,\ldots,N-1$, coupled with the boundary conditions $\phi_N = 0$ and $\phi_{1}=\phi_0$.
         
         A minimizer of the problem above represents the following discrete linear functional
         \begin{equation*}
                \eta \mapsto - \sum_{n=1}^N \eta_n \cdot \lb \frac{\delta \phi_{n-1}}{\delta t} - f(u_n,\theta_n) \rb \delta t
         \end{equation*}
     as an inner product in $\tilde H^1_T$
         \begin{equation*}
                \sum_{n=1}^N \lb \eta_n \cdot \phi_n \delta t + \frac{1}{\delta t}\delta \eta_{n-1}\cdot  \delta \phi_{n-1} \rb  = - \sum_{n=1}^N\eta_n \cdot\lb \frac{\delta \phi_{n-1}}{\delta t} - f(u_n,\theta_n) \rb \delta t.
         \end{equation*}
         For $(\theta_n,u_n) \in \mathcal{M}_N$, we define
          \begin{equation}
          \label{phid}
                \Phi(\theta_n,u_n) = \phi_n.
          \end{equation}
                                       
         We now examine a second discrete variational problem corresponding to \eqref{vp2}. 
         For $(u,\theta)\in \mathcal{M}_N$, consider
     \begin{equation*}
                \min_{\psi\in \tilde H^1_I} \delta t \sum_{n=0}^{N-1} \frac{1}{2} \lb\psi_n^2 + \lb\frac{\delta\psi_n}{\delta t}\rb^2\rb 
            + \psi_n \lb \frac{\delta u_n}{\delta t} + h(u_n, \theta_n) \rb,   
      \end{equation*}
      where $\tilde H^1_I =\{\psi=(\psi_0, \hdots, \psi_N)\in (\Rr^d)^{(N+1)}:\psi_0=0\}$.
          
      The discrete Euler-Lagrange equation is 
      \begin{equation}\label{eq:disc_el_psi}
        -\frac{\delta(\delta\psi_{n-1})}{(\delta t)^2} + \psi_n = - \frac{\delta u_n}{\delta t} - h(u_n, \theta_n)
      \end{equation}
      for $n=1,\ldots,N-1$, together with the conditions $\psi_0=0$ and $\psi_N=\psi_{N-1}$.
          
          From the Euler-Lagrange equation, we obtain the following representation formula in the Hilbert space $\{\psi \in H_n^1(\{0,\ldots,N\}): \psi_0 = 0\}$:  
          \begin{equation*}
                \sum_{n=0}^{N-1} \lb \eta \cdot \psi + \delta \eta  \cdot \delta \psi \rb \delta t =  \sum_{0}^N \eta \cdot \lb - \frac{\delta u_n}{\delta t}-h(u_n,\theta_n)  \rb \delta t.
          \end{equation*}
     Finally, we define
          \begin{equation}
          \label{psid}
                \Psi(\theta_n,u_n) = \psi_n,
          \end{equation}
          for $(u,\theta)\in \mathcal{M}_N$.
        \begin{pro} \label{prop:monot}
                Let $\Phi$ and $\Psi$ be given by \eqref{phid} and \eqref{psid}. Consider the following operator:
        \begin{equation}
        \label{qaop}
                Q_A\left[\begin{array}{c}
            \theta\\
            u   
            \end{array}\right]
                                =
                                \left[\begin{array}{c}
                                \Phi\\
                                \Psi
                                \end{array}\right].
        \end{equation}
           Let $\Mm_N^{\bar \theta_0,\bar u_T}$ be the set of all
             $(\theta, u) \in \Mm_N$ that satisfy the initial condition $\theta_0=\bar \theta_0$ and the terminal condition $u_N=\bar u_T$.      
                Then, $Q_A$ is monotone with respect to the discrete $H^1_N$ inner product corresponding to the norm
                \begin{equation}
                \label{h1nn}
                        \|(\eta, \nu )\|_{H^1_N}^2 = \sum_{n=0}^{N-1} |\eta_n|^2 + |\delta \eta_n|^2 + |\nu_n|^2 + |\delta \nu_n|^2.
                \end{equation}
        \end{pro}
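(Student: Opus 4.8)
The plan is to mimic, at the discrete level, the computation in the proof of Proposition \ref{contra1}, replacing integration by parts in $t$ with summation by parts in $n$ and using the discrete Euler--Lagrange equations \eqref{eq:disc_el_phi} and \eqref{eq:disc_el_psi} in place of \eqref{MFM:eq:ELeq_phi_cont_dyn} and \eqref{MFM:eq:ELeq_psi_cont_dyn}. Concretely, for $(\theta, u), (\tilde\theta, \tilde u)\in \Mm_N^{\bar\theta_0,\bar u_T}$, set $\phi=\Phi(\theta,u)$, $\tilde\phi=\Phi(\tilde\theta,\tilde u)$, $\psi=\Psi(\theta,u)$, $\tilde\psi=\Psi(\tilde\theta,\tilde u)$, and expand
\[
\left\langle Q_A\left[\begin{array}{c}\theta\\ u\end{array}\right]-Q_A\left[\begin{array}{c}\tilde\theta\\ \tilde u\end{array}\right], \left[\begin{array}{c}\theta-\tilde\theta\\ u-\tilde u\end{array}\right]\right\rangle_{H^1_N}
= \sum_{n=0}^{N-1}\Big[(\phi_n-\tilde\phi_n)\cdot(u_n-\tilde u_n) + \delta(\phi_n-\tilde\phi_n)\cdot\delta(u_n-\tilde u_n)\Big]
\]
\[
+ \sum_{n=0}^{N-1}\Big[(\psi_n-\tilde\psi_n)\cdot(\theta_n-\tilde\theta_n) + \delta(\psi_n-\tilde\psi_n)\cdot\delta(\theta_n-\tilde\theta_n)\Big].
\]

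Next I would apply summation by parts to the $\delta(\cdot)\cdot\delta(\cdot)$ terms. The aim is, for each of the two blocks, to move the second difference onto $\phi-\tilde\phi$ (resp. $\psi-\tilde\psi$) so that the discrete Euler--Lagrange equations \eqref{eq:disc_el_phi} and \eqref{eq:disc_el_psi} can be substituted. This is where the boundary terms enter: summation by parts produces endpoint contributions at $n=0$ and $n=N$, and these must be killed using the constraints defining $\Mm_N^{\bar\theta_0,\bar u_T}$ together with the natural boundary conditions $\phi_1=\phi_0$, $\phi_N=0$, $\psi_0=0$, $\psi_N=\psi_{N-1}$. Specifically: $u_N-\tilde u_N=0$ eliminates the boundary term paired with $(u-\tilde u)$ at $n=N$; the condition $\phi_1=\phi_0$ (discrete analog of $\dot\phi(0)=0$) eliminates the one at $n=0$; and symmetrically $\theta_0-\tilde\theta_0=0$ together with $\psi_0=0$ and $\psi_N=\psi_{N-1}$ handle the $\psi$-block. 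After these cancellations the expression reduces, via \eqref{eq:disc_el_phi}--\eqref{eq:disc_el_psi}, to
\[
\sum_{n=1}^{N-1}(u_n-\tilde u_n)\cdot\Big(f(u_n,\theta_n)-f(\tilde u_n,\tilde\theta_n)\Big)\,\delta t
- \sum_{n=1}^{N-1}(\theta_n-\tilde\theta_n)\cdot\Big(h(u_n,\theta_n)-h(\tilde u_n,\tilde\theta_n)\Big)\,\delta t,
\]
plus, possibly, cross terms of the form $\delta t$ times sums of $a_n\,\delta b_n$ and $b_n\,\delta a_n$ with $a_n=\theta_n-\tilde\theta_n$, $b_n=u_n-\tilde u_n$, which telescope to boundary values and vanish by exactly the discrete summation-by-parts identity already established in the proof of the preceding proposition on $A^N$.

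Finally I would invoke Remark \ref{R1} (with $k=\tilde k=0$, noting $\|v\|_\sharp\le\|v\|$ so the constants $\gamma_i$ apply on the relevant bounded set, using $\theta,\tilde\theta\ge0$) to bound the remaining sum above by
\[
\sum_{n=1}^{N-1}\Big(-\gamma\|\theta_n-\tilde\theta_n\|^2 - \sum_{i=1}^d\gamma_i(\theta_n^i+\tilde\theta_n^i)\|\Delta_i u_n-\Delta_i\tilde u_n\|^2\Big)\,\delta t \le 0,
\]
which gives monotonicity of $Q_A$ with respect to $\|\cdot\|_{H^1_N}$, and strict monotonicity unless $(\theta,u)=(\tilde\theta,\tilde u)$. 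The main obstacle I anticipate is purely bookkeeping: getting all the index ranges in the summation-by-parts steps exactly right so that every boundary term either telescopes against its partner or is annihilated by one of the four boundary conditions; the one subtlety worth double-checking is that the discrete natural boundary conditions ($\phi_1=\phi_0$ and $\psi_N=\psi_{N-1}$) are precisely the discrete analogs of $\dot\phi(0)=0$ and $\dot\psi(T)=0$, so that they cancel the correct endpoint, exactly as in the continuous argument.
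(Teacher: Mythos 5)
Your proposal is correct and follows essentially the same route as the paper's proof: expand the discrete $H^1_N$ inner product, sum by parts and kill the endpoint terms using $\theta_0=\tilde\theta_0$, $u_N=\tilde u_N$ together with the natural conditions $\phi_1=\phi_0$, $\phi_N=0$, $\psi_0=0$, $\psi_N=\psi_{N-1}$, substitute the discrete Euler--Lagrange equations \eqref{eq:disc_el_phi}--\eqref{eq:disc_el_psi}, let the remaining cross terms telescope exactly as in the proof for $A^N$, and conclude with Remark \ref{R1}. No gaps beyond the bookkeeping you already flag.
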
               
                                 
        \begin{proof}
                Let $(u,\theta)\in \Mm_N^{\bar \theta_0,\bar u_T}$ and $(\tilde u, \tilde \theta)\in \Mm_N^{\bar \theta_0,\bar u_T}$. Let $\phi, \tilde \phi$ and $\psi, \tilde\psi$ be given by \eqref{phid} and \eqref{psid}. 
                We begin computing
                \begin{align}
        \label{eq:H1norm_disc}\notag
                        &\left\langle Q_A\left[\begin{array}{c}
                        \theta\\
                        u
                        \end{array}\right]- Q_A\left[\begin{array}{c}
                        \tilde\theta\\
                        \tilde u
                        \end{array}\right],\left[\begin{array}{c}
                        \theta\\
                        u
                        \end{array}\right]-\left[\begin{array}{c}
                        \tilde\theta\\
                        \tilde u
                        \end{array}\right]\right\rangle_{H^1_N}
                        \\
            &=\sum_{n=0}^{N-1} (\theta_n-\tilde \theta_n)(\psi_n - \tilde\psi_n) 
                +\frac{\delta (\theta_n - {\tilde{\theta}}_n)}{\delta t} \frac{\delta (\psi_n - \tilde{\psi}_n)}{\delta t} \nonumber\\
            &\quad+ (u_n-\tilde u_n)(\phi_n-\tilde \phi_n) + \frac{\delta (u_n -{\tilde u}_n)}{\delta t}\frac{\delta({\phi}_n-{\tilde \phi}_n)}{\delta t} \nonumber\\
            &=\sum_{n=0}^{N-1} (\theta_n-\tilde \theta_n)(\psi_n - \tilde\psi_n) + (u_n-\tilde u_n)(\phi_n-\tilde \phi_n) \nonumber\\
            &\quad+\frac{1}{\delta t}\sum_{n=0}^{N-1}\lb\frac{\theta_{n+1}-\theta_n}{\delta t} 
                - \frac{\tilde\theta_{n+1}-\tilde\theta_n}{\delta t}\rb  (\delta{\psi}_n-\delta{\tilde \psi}_n)\nonumber\\
            &\quad+\frac{1}{\delta t}\sum_{n=0}^{N-1}\lb \frac{u_{n+1}-u_n}{\delta t} 
                - \frac{\tilde u_{n+1}-\tilde u_n}{\delta t} \rb (\delta{\phi}_n - \delta{\tilde \phi}_n).
        \end{align}
                Reorganizing, we see that the previous two lines are equal to
                \begin{align}\label{eq:mdisc_h1}
                        \frac{1}{(\delta t)^2} \sum_{n=0}^{N-1} \lsb(\theta_{n+1}-\tilde\theta_{n+1})-(\theta_n-\tilde\theta_n)\rsb (\delta{\psi}_n-\delta\tilde{\psi}_n)\nonumber\\
                                \quad+\lsb(u_{n+1}-\tilde u_{n+1})-(u_n-\tilde u_n)\rsb (\delta{\phi}_n-\delta{\tilde \phi}_n).
                \end{align}
                Using the notation
                \begin{equation*}
                        a_n=\theta_n-\tilde \theta_n, \quad b_n=\delta{\psi}_n-\delta{\tilde\psi}_n, \quad c_n=u_n-\tilde u_n, \text{ and } d_n = \delta{\phi}_n-\delta{\tilde\phi}_n,
                \end{equation*}
                we write \eqref{eq:mdisc_h1} multiplied by $(\delta t)^2$ as
                \begin{align}\label{eq:aux_disc}
                        \sum_{n=0}^{N-1} b_n \delta a_n + d_n \delta c_n &= b_{N-1}\delta a_{N-1} + d_{N-1}\delta c_{N-1} +\sum_{n=0}^{N-2} b_n \delta a_n + d_n \delta c_n \nonumber\\
                        & = b_{N-1}\delta a_{N-1} + d_{N-1}\delta c_{N-1} \nonumber\\
                        & + a_{N-1}b_{N-1} - a_0 b_0 -\sum_{n=0}^{N-2} a_{n+1} \delta b_n \nonumber\\
                        & + c_{N-1}d_{N-1} - c_0d_0 - \sum_{n=0}^{N-2} c_{n+1} \delta d_n,
                \end{align}
                where we used summation by parts in the last equality.
                Because  $\psi_N=\psi_{N-1}$, we have that $b_{N-1}=0$. 
                Moreover, since $\theta_0=\tilde\theta_0$, we have $a_0=0$, and $\phi_1=\phi_0$ implies
                $d_0=0$.
                                Thus, we further have
                \begin{align*}
                        d_{N-1}\delta c_{N-1} &= d_{N-1}\lb u_N-\tilde u_N - (u_{N-1}-\tilde u_{N-1}) \rb \\
                                &= -d_{N-1}(u_{N-1}-\tilde u_{N-1})\\
                                &= -c_{N-1}d_{N-1},
                \end{align*}
                where we used the terminal condition $u_N=\tilde u_N$.
                        According to these identities, \eqref{eq:aux_disc} becomes
                \begin{align*}
                        \sum_{n=0}^{N-1} b_n \delta a_n + d_n \delta c_n &= -\sum_{n=0}^{N-2} a_{n+1} \delta b_n + c_{n+1} \delta d_n.
                \end{align*}
                Therefore, \eqref{eq:mdisc_h1} can be written as
                \begin{equation}\label{eq:mdisc_h1:eq2}
                        -\sum_{n=0}^{N-2}\frac{\theta_{n+1}-\tilde\theta_{n+1}}{(\delta t)^2} \lb \delta^2 \psi_n - \delta^2 \tilde \psi_n \rb
                        -\sum_{n=0}^{N-2}\frac{u_{n+1}-\tilde u_{n+1}}{(\delta t)^2} \lb \delta^2 \phi_n - \delta^2 \tilde \phi_n \rb.
                \end{equation}
                Shifting the index $n+1$ into $n$ in \eqref{eq:mdisc_h1:eq2}, we obtain
                \begin{align*}
                        -\sum_{n=1}^{N-1}\frac{\theta_{n}-\tilde\theta_{n}}{(\delta t)^2} \lb \delta^2 \psi_{n-1} - \delta^2 \tilde \psi_{n-1} \rb
                        -\sum_{n=1}^{N-1}\frac{u_{n}-\tilde u_{n}}{(\delta t)^2} \lb \delta^2 \phi_{n-1} - \delta^2 \tilde \phi_{n-1} \rb.
                \end{align*}
                Using the Euler-Lagrange equations \eqref{eq:disc_el_phi} and \eqref{eq:disc_el_psi} in the preceding expression,  yields
                \begin{align*}
                        &-\sum_{n=1}^{N-1} (\theta_n-\tilde \theta_n) \lb \psi_n + \frac{u_{n+1}-u_n}{\delta t} + h(u_n,\theta_n) 
                                - \tilde\psi_n - \frac{\tilde u_{n+1}-\tilde u_n}{\delta t} - h(\tilde u_n,\tilde \theta_n)\rb \\
                        &-\sum_{n=1}^{N-1} (u_n-\tilde u_n) \lb \phi_n + \frac{\theta_n - \theta_{n-1}}{\delta t} - f(u_n,\theta_n) 
                                - \tilde \phi_n - \frac{\tilde\theta_n - \tilde \theta_{n-1}}{\delta t} + f(\tilde u_n, \tilde \theta_n)\rb.
                \end{align*}
                Finally, plugging the previous result into \eqref{eq:H1norm_disc}, we obtain
                \begin{align*}
                        &-\sum_{N=1}^{N-1} (\theta_n-\tilde \theta_n) \lb \frac{u_{n+1}-\tilde u_{n+1}}{\delta t} - \frac{ u_{n}- \tilde u_n}{\delta t} 
                                + h(u_n,\theta_n) - h(\tilde u_n, \tilde \theta_n) \rb \\
                        &-\sum_{n=1}^{N-1} (u_n-\tilde u_n) \lb \frac{\theta_n - \tilde \theta_{n}}{\delta t} -\frac{\theta_{n-1}- \tilde\theta_{n-1}}{\delta t} 
                                - f(u_n,\theta_n) + f(\tilde u_n,\tilde \theta_n) \rb\\
                        &\leq -\gamma \|(\theta- \tilde \theta)(s)\|^2 -\sum_{i=1}^d \gamma_i(\theta^i+\tilde\theta^i)(s) \|(\Delta_i u-\Delta_i \tilde u)(s)\| ^2,
                \end{align*}
               using Remark \ref{R1} and arguing as at the end of Subsection \ref{sub:monotone_discretization}.
        \end{proof}             
\subsection{Projection algorithm} 
\label{sub:projection_algorithm}
        As shown in Section \ref{sec:stationary_problem}, the monotone flow may not keep $\theta$ positive. 
        Thus, to preserve probabilities and prevent $\theta$ to take negative values, we define a projection operator through the following optimization problem. 
         Given $(\eta, w)\in \Mm_N$, we solve
         \begin{equation}\label{eq:pos}
         \begin{cases}
         \min_{\lambda^i_n} \sum_{i=1}^d (\eta^i_n - \lambda^i_n)^2 \\
         \sum_{i=1}^d \lambda^i_n = 1, \quad \lambda^i_n \geq 0
         \end{cases}
         \end{equation}
         for $n\in\{0,\ldots, N\}$. 
         Then, we set
         \[      P
         \left[\begin{array}{c}
         \eta\\
         w
         \end{array}\right]_n=
         \left[\begin{array}{c}
         \lambda_n\\
         w_n
         \end{array}\right]
         \]
         for $0\leq n \leq N$. We note that if $\eta_n$ is a probability, then $\lambda_n=\eta_n$. Moreover, $P$ is a contraction.
        
        Now, we introduce 
        the following iterative scheme:
        \begin{equation}\label{eq:prj}
                w_{k+1} = P\left[w_k - \upsilon  Q_A [w_k]\right],
        \end{equation}
        where $w_k = (\theta_k, u_k)$, $Q_A$ is defined in \eqref{qaop}, and $\upsilon>0$ is the step size. 
       \begin{pro}
       	For $\upsilon$ small enough,
       the map \eqref{eq:prj} is a contraction.
       \end{pro}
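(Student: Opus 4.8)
The plan is to write $T_\upsilon := I-\upsilon\,Q_A$ and to prove that, on a bounded invariant set $B\subset\Mm_N^{\bar\theta_0,\bar u_T}$ containing the iterates, the map $T_\upsilon$ is a contraction in the discrete $H^1_N$ norm \eqref{h1nn} once $\upsilon>0$ is small; since \eqref{eq:prj} reads $w\mapsto P\,T_\upsilon w$ and $P$, being the metric projection onto a closed convex set, is $1$-Lipschitz, the composition is then a contraction with the same factor. First I would record the identity
\[
\|T_\upsilon w-T_\upsilon\tilde w\|_{H^1_N}^2=\|w-\tilde w\|_{H^1_N}^2-2\upsilon\left\langle Q_A w-Q_A\tilde w,\,w-\tilde w\right\rangle_{H^1_N}+\upsilon^2\|Q_A w-Q_A\tilde w\|_{H^1_N}^2
\]
for $w=(\theta,u)$ and $\tilde w=(\tilde\theta,\tilde u)$ in $\Mm_N^{\bar\theta_0,\bar u_T}$ with $\theta,\tilde\theta\ge 0$ (a hypothesis that the projection step enforces along the iteration). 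The middle term is controlled by Proposition \ref{prop:monot}, which bounds $\left\langle Q_A w-Q_A\tilde w,\,w-\tilde w\right\rangle_{H^1_N}$ above by $-\gamma\|\theta-\tilde\theta\|^2-\sum_i\gamma_i(\theta^i+\tilde\theta^i)\|\Delta_i u-\Delta_i\tilde u\|^2\le 0$.

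Next I would derive a Lipschitz bound $\|Q_A w-Q_A\tilde w\|_{H^1_N}\le L\,\|w-\tilde w\|_{H^1_N}$ for $w,\tilde w\in B$. For this I would use the representation formulas of Section \ref{sub:monotone_discretization_for_the_h_1_projections}: testing the defining identities against an arbitrary $\eta$ exhibits $\Phi[w]-\Phi[\tilde w]$ and $\Psi[w]-\Psi[\tilde w]$ as the $H^1_N$-Riesz representatives of the discrete $L^2$ functionals built from $\tfrac{\delta(\theta-\tilde\theta)}{\delta t}$ and $f(u_n,\theta_n)-f(\tilde u_n,\tilde\theta_n)$, respectively $\tfrac{\delta(u-\tilde u)}{\delta t}$ and $h(u_n,\theta_n)-h(\tilde u_n,\tilde\theta_n)$; the discrete difference operators are bounded (with a constant depending on $\delta t$ and $N$) and $h,\alpha^*$ are locally Lipschitz by Assumption \ref{A1}, so a Cauchy--Schwarz estimate gives the claim with $L=L(\delta t,N,B)$. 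Feeding both estimates into the identity above yields
\begin{equation*}
\begin{split}
\|T_\upsilon w-T_\upsilon\tilde w\|_{H^1_N}^2\le{}&(1+\upsilon^2L^2)\|w-\tilde w\|_{H^1_N}^2\\
&-2\upsilon\Big(\gamma\|\theta-\tilde\theta\|^2+\textstyle\sum_i\gamma_i(\theta^i+\tilde\theta^i)\|\Delta_i u-\Delta_i\tilde u\|^2\Big).
\end{split}
\end{equation*}

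The main obstacle is closing this last inequality, and it is a genuine one: the coercive quantity supplied by Proposition \ref{prop:monot} only dominates the $L^2$-norm of $\theta-\tilde\theta$ and the seminorms $\|\Delta_i u-\Delta_i\tilde u\|$, whereas the full $H^1_N$ norm \eqref{h1nn} also records the discrete derivatives of $\theta-\tilde\theta$ and $u-\tilde u$ together with the component of $u-\tilde u$ along $\mathbf 1$; consequently the raw estimate only gives $\|T_\upsilon w-T_\upsilon\tilde w\|_{H^1_N}^2\le(1+\upsilon^2L^2)\|w-\tilde w\|_{H^1_N}^2$, which is not yet a contraction. To repair this I would upgrade the degenerate monotonicity of $Q_A$ to a coercivity estimate $\left\langle Q_A w-Q_A\tilde w,\,w-\tilde w\right\rangle_{H^1_N}\ge\mu\,\|w-\tilde w\|_{H^1_N}^2$ valid on $B$ for some $\mu=\mu(\delta t,N)>0$, by exploiting the regularizing quadratic terms $\tfrac12(|\phi|^2+|\dot\phi|^2)$ and $\tfrac12(|\psi|^2+|\dot\psi|^2)$ in the variational problems \eqref{vp1}--\eqref{vp2} defining $\Phi$ and $\Psi$: these make the discrete elliptic equations \eqref{eq:disc_el_phi}--\eqref{eq:disc_el_psi} uniformly invertible, which, combined with the discrete integration-by-parts identity already used in the proof of Proposition \ref{prop:monot}, should convert the control of $\theta-\tilde\theta$ and $\Delta_i u-\Delta_i\tilde u$ into control of $Q_A w-Q_A\tilde w$ (and hence of $u-\tilde u$ up to the harmless global shift, which the probability constraint built into $P$ removes). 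Granting such a $\mu$, any $\upsilon$ with $0<\upsilon<2\mu/L^2$ makes $1-2\upsilon\mu+\upsilon^2L^2<1$, so $T_\upsilon$ is a contraction with that factor; composing with the $1$-Lipschitz projection $P$ completes the argument.
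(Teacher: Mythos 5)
Your opening paragraph is, in essence, the paper's entire proof: the paper disposes of this proposition in one line, asserting that $E_\upsilon$ is a contraction because $Q_A$ is a monotone Lipschitz map (citing Proposition \ref{prop:monot}), and composing with the nonexpansive projection $P$. Up to that point you and the paper coincide, and your Lipschitz estimate for $Q_A$ via the discrete Riesz representation (with a constant depending on $\delta t$) is fine. Your further observation is also accurate: the quantity furnished by Proposition \ref{prop:monot} controls only $\|\theta-\tilde\theta\|$ and the weighted differences $\|\Delta_i u-\Delta_i\tilde u\|$, not the discrete time derivatives nor the component of $u-\tilde u$ along $\mathbf{1}$, so monotonicity plus Lipschitz continuity alone only gives the factor $1+\upsilon^2L^2$; the paper's one-line proof simply does not engage with this point.

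The genuine gap is in your repair step. The strong coercivity $\langle Q_Aw-Q_A\tilde w,\,w-\tilde w\rangle_{H^1_N}\ge\mu\|w-\tilde w\|^2_{H^1_N}$ is asserted only at the level of ``should convert'', and it is in fact false on $\Mm_N^{\bar\theta_0,\bar u_T}$: take $\tilde\theta_n=\theta_n$ for all $n$ and $\tilde u_n=u_n+c_n\mathbf{1}$ with $c_N=0$ and $c\not\equiv 0$ (for instance $c_n=1$ for $n<N$). Since $h$, $\alpha^*$, and hence $f$ depend on $u$ only through the differences $\Delta_i u$, all nonlinear terms coincide for the two pairs, and the chain of equalities in the proof of Proposition \ref{prop:monot} (everything before the final use of Remark \ref{R1}) shows that the pairing is exactly zero while $\|w-\tilde w\|_{H^1_N}>0$; no $\mu>0$ can exist. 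Worse, the right-hand sides of \eqref{eq:disc_el_psi} for the two pairs differ by $-\tfrac{\delta c_n}{\delta t}\mathbf{1}$, so $Q_Aw\neq Q_A\tilde w$, and for this pair the unprojected Euler step satisfies $\|E_\upsilon w-E_\upsilon\tilde w\|^2_{H^1_N}=\|w-\tilde w\|^2_{H^1_N}+\upsilon^2\|Q_Aw-Q_A\tilde w\|^2_{H^1_N}$, i.e.\ it strictly expands for every $\upsilon>0$. Hence your target inequality $1-2\upsilon\mu+\upsilon^2L^2<1$ is unreachable along this route; any complete argument must handle such degenerate directions (or change the metric, or use structure beyond monotonicity and Lipschitz continuity), which neither your proposal nor the paper's proof supplies. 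Finally, fix the orientation: you quote Proposition \ref{prop:monot} as an upper bound by a nonpositive quantity, yet your expansion of $\|T_\upsilon w-T_\upsilon\tilde w\|^2$ treats the cross term as a favorable contribution; with the signs literally as in \eqref{eq:prj} and \eqref{qaop} the middle term works against you, the contractive Euler step being the one aligned with the flow \eqref{eq:in_term:aux_dyn} (the paper is loose about this as well, but your displayed inequality needs the sign sorted to be internally consistent).
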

       \begin{proof}
       	The operator $E_{\upsilon}$ is a contraction, because $Q_A$ is a monotone Lipschitz map, see Proposition \ref{prop:monot}.
       \end{proof}

        \begin{pro} 
                Let $(\bar \theta, \bar u)\in \Mm_N$ solve 
            \begin{equation*}
                A^N\left[\begin{array}{c}
                \theta\\
                u
                \end{array}\right]=
                \left[\begin{array}{c}
                        0\\
                        0
                \end{array}\right],
            \end{equation*}
            with $u_N=\bar u_T$ and $\theta_0=\bar \theta_0$.       
            Then, $(\bar \theta, \bar u)$ is a fixed point of \eqref{eq:prj}.
                
            Conversely, let  $(\tilde \theta, \tilde  u)\in \Mm_N$  be a fixed point of \eqref{eq:prj}
            with $\tilde{\theta}>0$. 
            Then, there exists a weak solution to \eqref{MFM:cont_dyn}, $(\bar \theta, \bar u)$ with
            $\bar \theta=\tilde{\theta}$ and $\bar u$ given by
            \begin{equation}
            \label{recovery}
            \frac{\delta \bar u^i_n}{\delta t}= - h(\Delta_i \tilde u,\theta,i),
            \end{equation}         
            with $\bar u_N=\bar u_T$.
        \end{pro}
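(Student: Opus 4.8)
The plan is to transplant to the discrete $H^1_N$ setting the argument used for the stationary iteration \eqref{iterp}: a pair is fixed by \eqref{eq:prj} exactly when the Euler step $w-\upsilon Q_A[w]$ leaves the value--function block alone and moves each node of the distribution block only by a multiple of $\mathbf 1$, which the simplex projection \eqref{eq:pos} then undoes. I use that one component of $Q_A$ is the $H^1_N$--representation of the discrete Hamilton--Jacobi residual (the right--hand side of \eqref{eq:disc_el_psi}) and corrects the value--function block, while the other is the representation of the discrete Kolmogorov residual (the right--hand side of \eqref{eq:disc_el_phi}, augmented by $k_n$) and corrects the distribution block; this is the pairing for which Remark \ref{R1} gives the monotonicity estimate and the one behind the reconstruction \eqref{recovery}. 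I also use that each discrete Euler--Lagrange equation is uniquely solved given its source and one--sided boundary data, and that it decouples over the state index $i$.

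\emph{Solution gives a fixed point.} Suppose $A^N[\bar\theta;\bar u]=0$ together with $\theta_0=\bar\theta_0$, $u_N=\bar u_T$. Vanishing of the Hamilton--Jacobi block of $A^N$ says the source in \eqref{eq:disc_el_psi} is zero at every node, so the corresponding representation vanishes identically and the value--function block of $w-\upsilon Q_A[w]$ stays equal to $\bar u$. Vanishing of the Kolmogorov block, together with the defining formula for $k_n$ (which subtracts precisely the $\mathbf 1$--average of that block), says the source in \eqref{eq:disc_el_phi} is, at each node, a scalar multiple of $\mathbf 1$; since \eqref{eq:disc_el_phi} decouples over $i$, the representation it defines has the form $c_n\mathbf 1$, $c_n\in\re$, so the distribution block of $w-\upsilon Q_A[w]$ is $\bar\theta_n-\upsilon c_n\mathbf 1$ at node $n$. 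As $\bar\theta_n$ is already a probability vector, the minimiser of \eqref{eq:pos} applied to $\bar\theta_n-\upsilon c_n\mathbf 1$ is $\bar\theta_n$ itself (only the mass constraint is active). Hence $P[w-\upsilon Q_A[w]]=w$ and the data is preserved, so $(\bar\theta,\bar u)$ is a fixed point of \eqref{eq:prj}.

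\emph{Fixed point gives a weak solution.} Conversely, let $(\tilde\theta,\tilde u)\in\Mm_N$ be a fixed point of \eqref{eq:prj} with every $\tilde\theta_n>0$. Since $P$ does not touch the value--function block, the fixed--point identity there forces the Hamilton--Jacobi representation of $(\tilde\theta,\tilde u)$ to vanish; through \eqref{eq:disc_el_psi} this yields the discrete Hamilton--Jacobi equation $\tfrac{\delta\tilde u_n}{\delta t}=-h(\Delta_i\tilde u_n,\tilde\theta_n,i)$ for $1\le n\le N-1$. Defining $\bar u$ by the backward recursion \eqref{recovery} with $\bar u_N=\bar u_T$ then gives $\bar u_n=\tilde u_n$ for $1\le n\le N$ and a corrected value only at $n=0$, the node on which this representation carries no information since its test space satisfies the left boundary condition. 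For the distribution block, because each $\tilde\theta_n$ lies in the relative interior of $\Ss^d$, for $\upsilon$ small the map \eqref{eq:pos} is affine near $\tilde\theta_n$, so the fixed--point identity there reduces to the Kolmogorov representation being a scalar multiple of $\mathbf 1$ at every node; feeding this back into \eqref{eq:disc_el_phi} and using the formula for $k_n$ shows the Kolmogorov block of $A^N[\tilde\theta;\bar u]$ vanishes at the interior nodes. Setting $\bar\theta=\tilde\theta$, the pair $(\bar\theta,\bar u)$ therefore satisfies the discretised system \eqref{disc_probl} on the interior nodes together with $\theta_0=\bar\theta_0$, $u_N=\bar u_T$; arguing as in the proposition following Subsection \ref{sub:monotone_discretization}, i.e.\ testing $A^N$ against admissible smooth pairs and using the monotonicity established there, identifies $(\bar\theta,\bar u)$ as a weak solution of \eqref{MFM:cont_dyn}.

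\emph{Main difficulty.} The algebra with $A^N$, $Q_A$ and \eqref{eq:disc_el_phi}--\eqref{eq:disc_el_psi} is routine; all the care is at the two end nodes. The discrete Euler--Lagrange equations hold only for $1\le n\le N-1$, and the two representations carry one--sided conditions ($\phi_N=0$, $\phi_1=\phi_0$ and $\psi_0=0$, $\psi_N=\psi_{N-1}$), so a fixed point of \eqref{eq:prj} recovers the scheme \eqref{disc_probl} only in the interior. Those two free nodes are precisely why the output is a weak rather than a classical solution and why $\bar u$ must be rebuilt through \eqref{recovery} instead of being taken equal to $\tilde u$; one still has to check that the recovery is compatible with $\bar u_N=\bar u_T$, and that the hypothesis $\tilde\theta>0$ is exactly what makes \eqref{eq:pos} locally affine so that the distribution fixed--point identity genuinely forces the $\mathbf 1$--form on the Kolmogorov representation (if $\tilde\theta$ touched the boundary of $\Ss^d$, only an inequality would survive, as in the weak stationary system \eqref{stat_mfg_ws}).
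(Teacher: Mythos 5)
Your argument is built on an inverted identification of which $H^1$ representation updates which block of the unknown, and this is not a cosmetic slip. In the scheme the paper actually analyses (see the continuous flow \eqref{eq:in_term:aux_dyn}, where $u_s=\Phi$ and $\theta_s=\Psi$, and the inner-product computation in the proof of Proposition \ref{prop:monot}, which pairs $u-\tilde u$ with $\phi-\tilde\phi$ and $\theta-\tilde\theta$ with $\psi-\tilde\psi$), the Kolmogorov representation $\phi$ of \eqref{eq:disc_el_phi} corrects the \emph{value-function} block and the Hamilton--Jacobi representation $\psi$ of \eqref{eq:disc_el_psi} corrects the \emph{distribution} block; this crossed, adjoint pairing is precisely what makes Remark \ref{R1} applicable. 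You assign $\psi$ to $u$ and $\phi$ to $\theta$, which is the non-monotone ``one equation per unknown'' pairing. The consequence is that your converse direction proves the wrong thing: since the $u$-block carries no projection, your fixed-point identity forces $\psi\equiv 0$ and hence the discrete Hamilton--Jacobi equation holds \emph{exactly} for $\tilde u$, so that \eqref{recovery} becomes vacuous --- indeed you assert $\bar u_n=\tilde u_n$ for $1\le n\le N$, which is false in general and at odds with the very presence of \eqref{recovery} in the statement.

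With the correct assignment, the fixed-point identity on the $u$-block gives $\phi\equiv 0$, hence the exact discrete Kolmogorov equation; the fixed-point identity on the $\theta$-block, combined with $\tilde\theta>0$ and the optimality conditions of \eqref{eq:pos}, gives only $\psi_n=\kappa_n\mathbf{1}$ for scalars $\kappa_n$ (the multipliers of the mass constraint). Feeding $\psi^i_n-\psi^j_n=0$ into \eqref{eq:disc_el_psi} shows that $\tfrac{\delta\tilde u^i_n}{\delta t}+h(\Delta_i\tilde u_n,\tilde\theta_n,i)$ is independent of $i$ but need not vanish; the reconstruction \eqref{recovery} is exactly the device that removes this $n$-dependent constant, using that $\Delta_i$ annihilates multiples of $\mathbf{1}$, so $\Delta_i\bar u_n=\Delta_i\tilde u_n$ and hence $\tfrac{\delta\bar u^i_n}{\delta t}=-h(\Delta_i\bar u_n,\tilde\theta_n,i)$. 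This mechanism --- the simplex multiplier contaminating the Hamilton--Jacobi equation by an $i$-independent constant, removed via \eqref{recovery} --- is the substance of the converse direction and is missing from your proof. Your forward direction survives (when $A^N$ vanishes on a probability path one has $k_n=0$, both sources vanish, both representations are zero, and $P$ fixes a probability vector), and your attention to the end nodes and to the role of $\tilde\theta>0$ is well placed, but the converse must be redone with the correct pairing of $\phi$ and $\psi$.
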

        \begin{proof}
                        The first claim of the proposition is immediate from the definition of $Q_A$. 
            To prove the second part, let $(\tilde \theta,\tilde u)\in \Mm_N$ be a fixed point of \eqref{eq:prj}. 
            For all $n\in \{0,\ldots,N\}$ and $i \in I_d$, we have
            \begin{equation*}
                \tilde u^i_n = \tilde u^i_n + \upsilon \phi_n(\tilde u_n, \tilde \theta_n).
            \end{equation*}
            Therefore, $\phi_n(\tilde u_n, \tilde \theta_n)=0$. Hence, from the uniqueness
            of the solution of \eqref{eq:disc_el_phi}, we conclude that 
            \[
 -\frac{\delta\tilde \theta_{n-1}}{\delta t} + f(\tilde u_{n},\tilde \theta_{n})=0           
            \]

            Furthermore, for $\tilde \theta_n^i = \lambda_n^i$, where $\lambda_n^i$ solves \eqref{eq:pos}, we have
                \begin{equation*}
                        \begin{split}
                        \tilde \theta_n^i &= P \left[ \tilde \theta_n^i - \upsilon \psi_n(\tilde u_n, \tilde \theta_n) \right] \\
                                        &= \lb \tilde \theta_n^i - \upsilon \psi_n(\tilde u_n, \tilde \theta_n) + \upsilon \kappa_n \rb^+
                    \end{split}
                \end{equation*}
            For some $\kappa_n\geq 0$. If $\tilde \theta^i_n > 0$, $\psi_n(\tilde u_n, \tilde \theta_n)=\kappa_n$. 
        Otherwise,
        $\psi_n(\tilde u_n, \tilde \theta_n) \geq \kappa_n$.
        
        If $\tilde \theta^i_n > 0$, using the fact that $\psi$ solves \eqref{eq:disc_el_psi}, we gather
          \begin{equation*}
          \frac{\delta \tilde u^i_n}{\delta t} - \frac{1}{d-1}\sum_{j \neq i} \frac{\delta \tilde u^j_n}{\delta t} = \frac{1}{d-1}\sum_{j \neq i} h(\Delta_j \tilde u_n,\theta,j) - h(\Delta_i \tilde u_n,\theta,i).
          \end{equation*}
          Now, we define $\bar u$ as in the statement of the proposition.  
          A simple computation gives 
          \[
           \frac{\delta \bar u^i_n}{\delta t}- \frac{\delta \bar u^j_n}{\delta t}=
           \frac{\delta \tilde u^i_n}{\delta t}- \frac{\delta \tilde u^j_n}{\delta t}.
          \]
          Hence, $\Delta_j \bar u_n=\Delta_j \tilde u_n$. Consequently, 
            \begin{equation*}
            \frac{\delta \bar u^i_n}{\delta t}= - h(\Delta_i \bar u,\theta,i).
            \end{equation*}
Thus, $(\bar \theta, \bar u)$ solves \eqref{MFM:cont_dyn}.            
        \end{proof}

\subsection{Numerical examples} 
\label{sec:numerical_implementation}

    Finally, we present numerical simulations for the  time-dependent paradigm-shift problem.
    As explained before, we discretize the time variable, $t \in [0,T]$, into $N$ intervals of length $\delta t=\frac{T}{N}$. 
    We then have $N$ equations for each state. Because $d=2$, 
    this system consists of $4N$ equations evolving according to 
    \eqref{eq:prj}.

        To compute approximate solutions to \eqref{sys:td_sol_hj}-\eqref{sys:td_sol_kolm}, we use the projection algorithm, \eqref{eq:prj}, with $N=450$.
    We first consider a case where the analytical solution can be computed explicitly. We choose 
    $\theta^1=\theta^2=\frac 1 2$. Thus, from \eqref{sys:td_sol_hj}, it follows that  $u^1=u^2$ are affine functions of $t$ with $u^1_t=u^2_t-\frac{1}{2}$. 
    Our results are depicted in Figures \ref{AA1}, \ref{AA2}, and \ref{AA3}.
    In Figure \ref{AA1}, for $t\in[0,T]$, $T=8$, we plot the initial guess ($s=0$) for $\theta$ and $u$, and the analytical solution.
    In Figure \ref{AA2}, we see the evolution of density of players and the value functions for $s\in[0,20]$. The final results, $s=100$, are shown in Figure \ref{AA3}. 
    Finally, in Figure \ref{AA4}, we show the evolution of the $H^1$ norm of the difference between the analytical, $(\tilde u, \tilde \theta)$, and computed solution, $(u,\theta)$.        The norm $\|(\tilde u, \tilde \theta) - (u,\theta) \|_{H^1([0,T])}^2(s)$ is computed as 
    \begin{equation*}
    \sum_{j=0}^{N-1} \sum_{i=1}^2 \delta t \lb|\tilde u^i_j - u^i_j |^2 + |\dot{\tilde{u}}^i_j - \dot u^i_j |^2 
    + |\tilde{\theta}^i_j -\theta^i_j|^2 + |\dot{\tilde{\theta}}^i_j -\dot{\theta}^i_j|^2 \rb(s)
    \end{equation*}
    for $s\geq 0$, where $v^i_j=v^i(t_j,s)$ and $\delta t$ is  size of the time-discretization step.
        
    \begin{figure}
                         \centering
                        \begin{subfigure}{0.5\textwidth}
                                         \centering
                \includegraphics[width=1\linewidth]{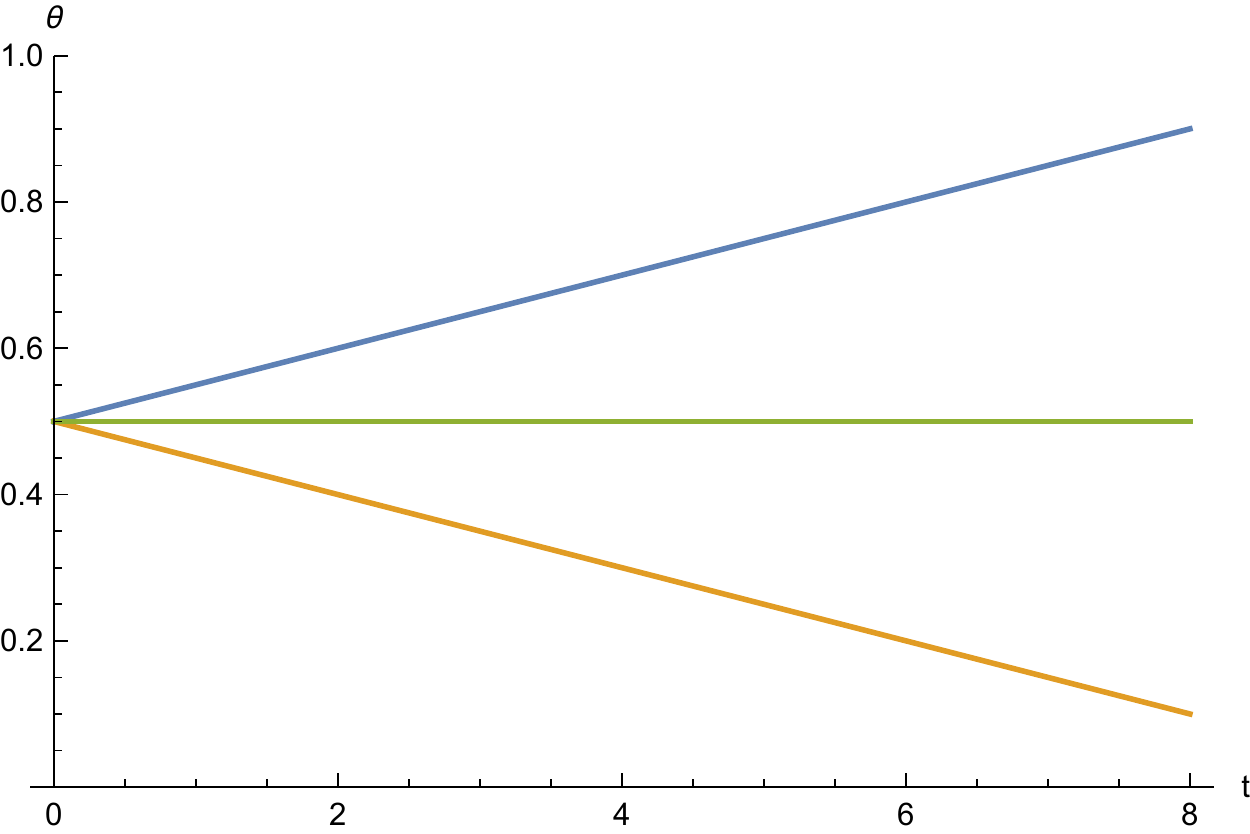}
                \caption{Initial condition $\theta(\cdot,s=0)$ vs exact solution.}
               \label{fig:it_cond}
                        \end{subfigure}
            \begin{subfigure}{0.5\textwidth}
            \centering
                \includegraphics[width=1\linewidth]{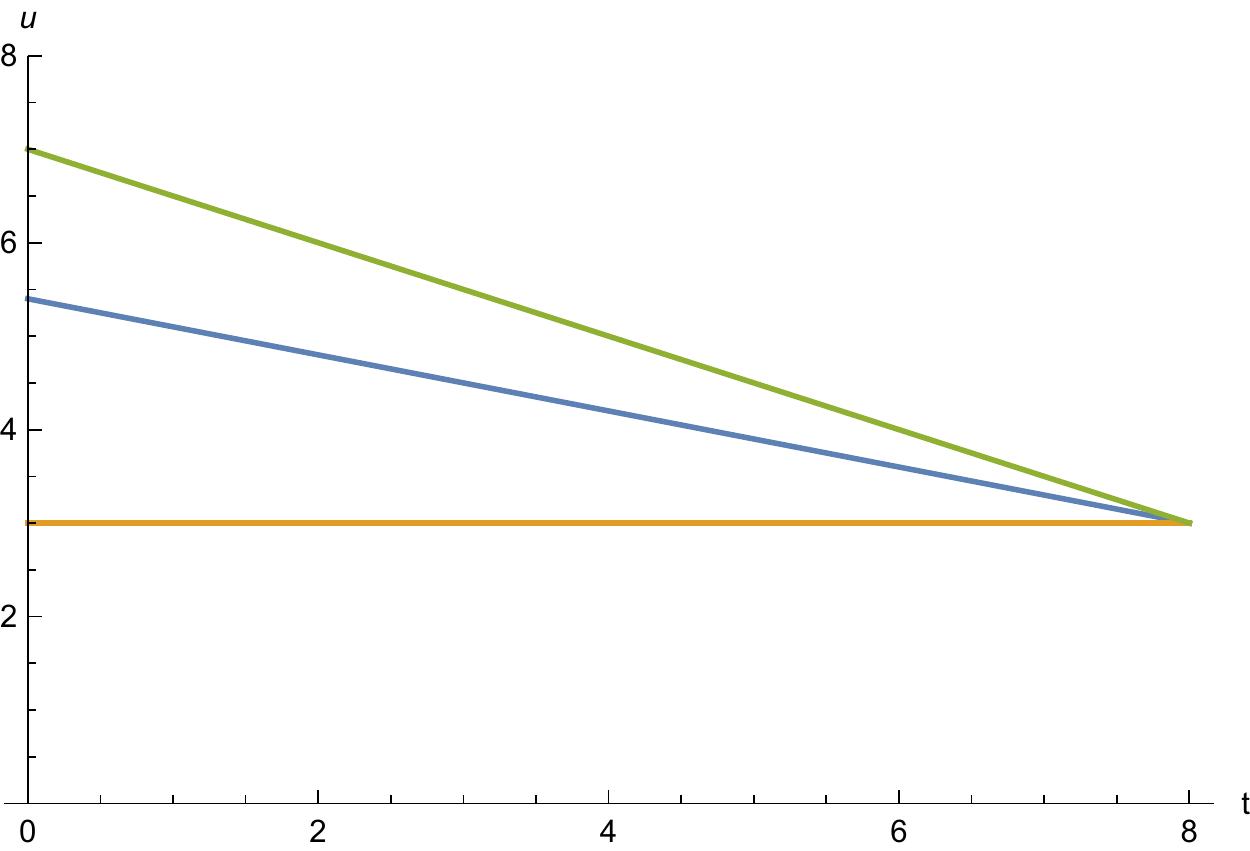}
                \caption{Initial condition $u(\cdot,s=0)$.}
            \end{subfigure}
                \caption{The blue lines correspond to the initial values ($s=0$) for state 1, $(\theta^1,u^1)$, the orange lines to the initial values for state 2, $(\theta^2,u^2)$.  
                                         The green lines represent the analytical solution $\theta^1=\theta^2$ and $u^1=u^2$ for $t\in [0,T]$.  
                                         }
        \label{AA1}
    \end{figure}
                \begin{figure}
        \centering
                        \begin{subfigure}{0.5\textwidth}
            \centering
                \includegraphics[width=1\linewidth]{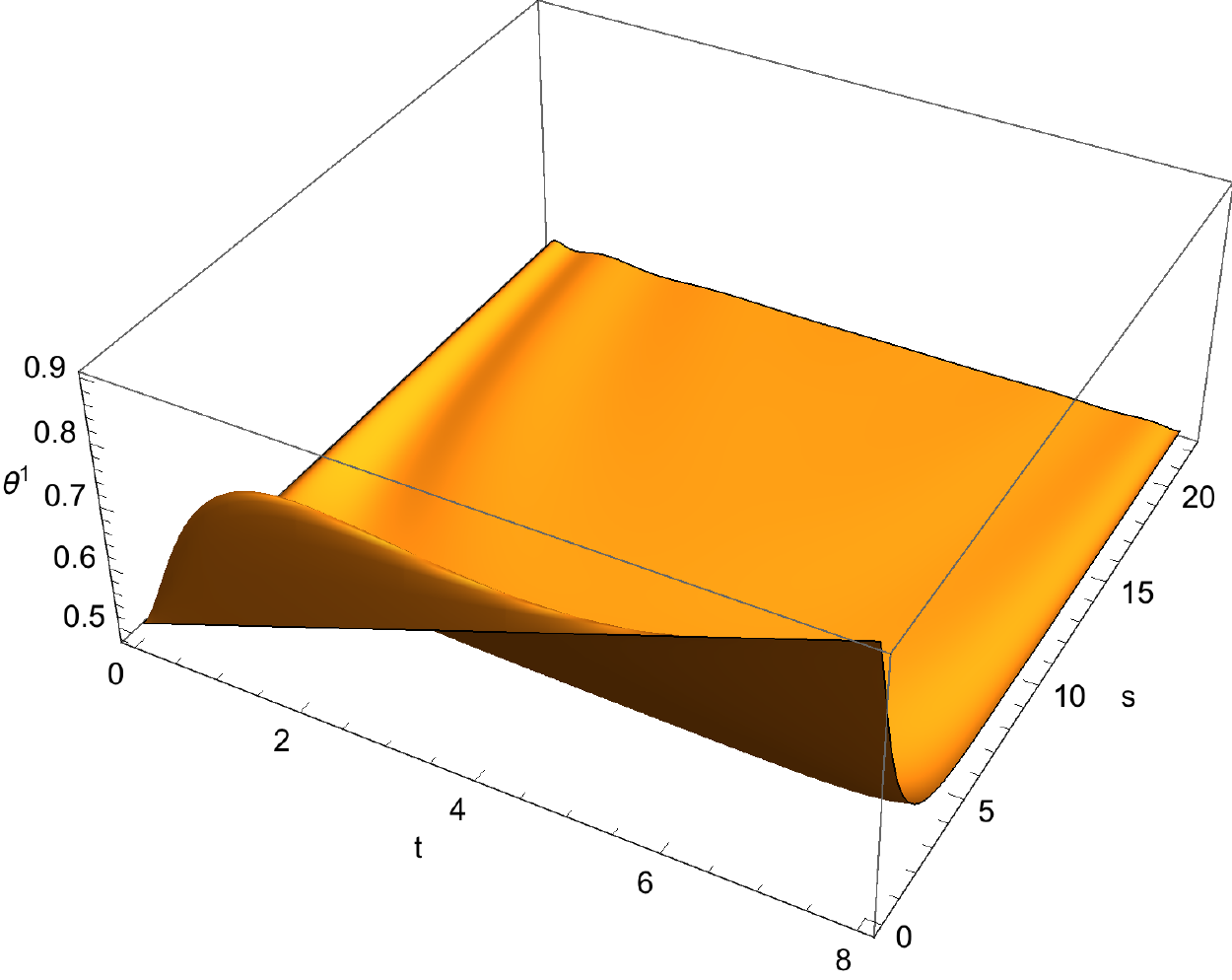}
                \caption{Distribution of players $\theta^1(t,s)$.}
               \label{fig:AA2:theta}
                        \end{subfigure}
            \begin{subfigure}{0.5\textwidth}
            \centering
                \includegraphics[width=1\linewidth]{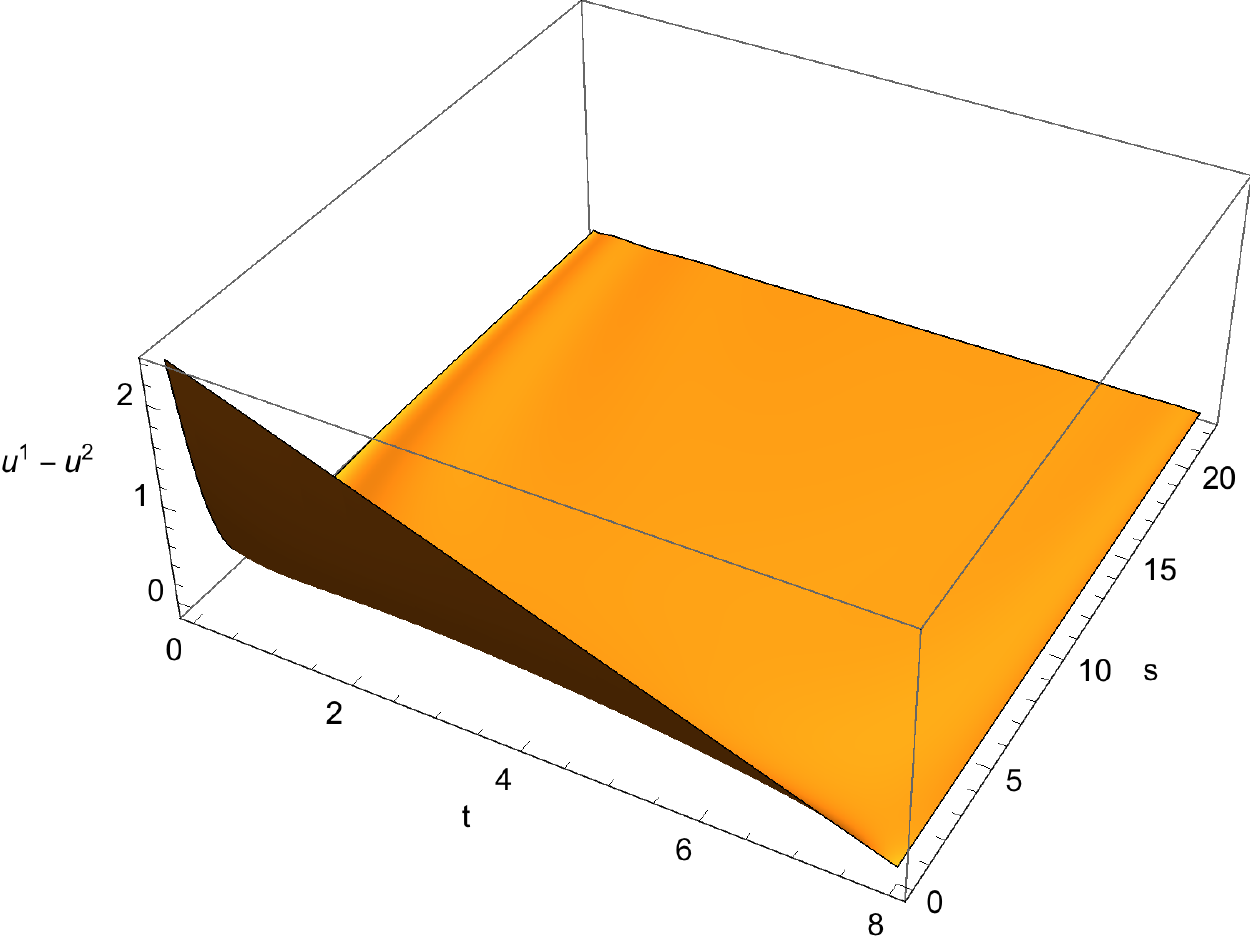}
                \caption{Value functions $u(t,s)$.}
            \end{subfigure}
        \caption{Evolution of $(u^1-u^2)(\cdot,s)$ and $\theta(\cdot,s)$, for $s\in[0,20]$.}
        \label{AA2}
        \end{figure}
                \begin{figure}
        \centering
                        \begin{subfigure}{0.5\textwidth}
            \centering
                \includegraphics[width=1\linewidth]{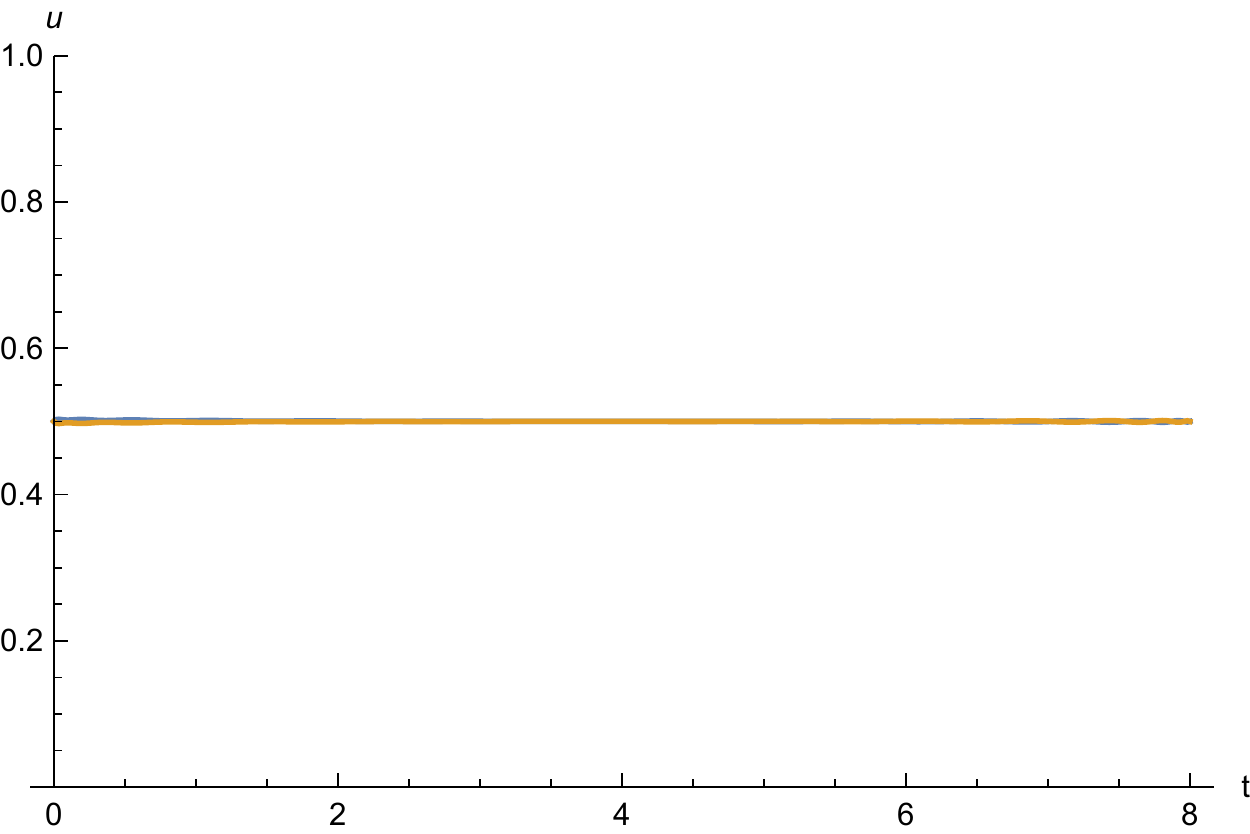}
                \caption{Final distribution $\theta(\cdot,s=100)$.}
                        \end{subfigure}
            \begin{subfigure}{0.5\textwidth}
            \centering
                \includegraphics[width=1\linewidth]{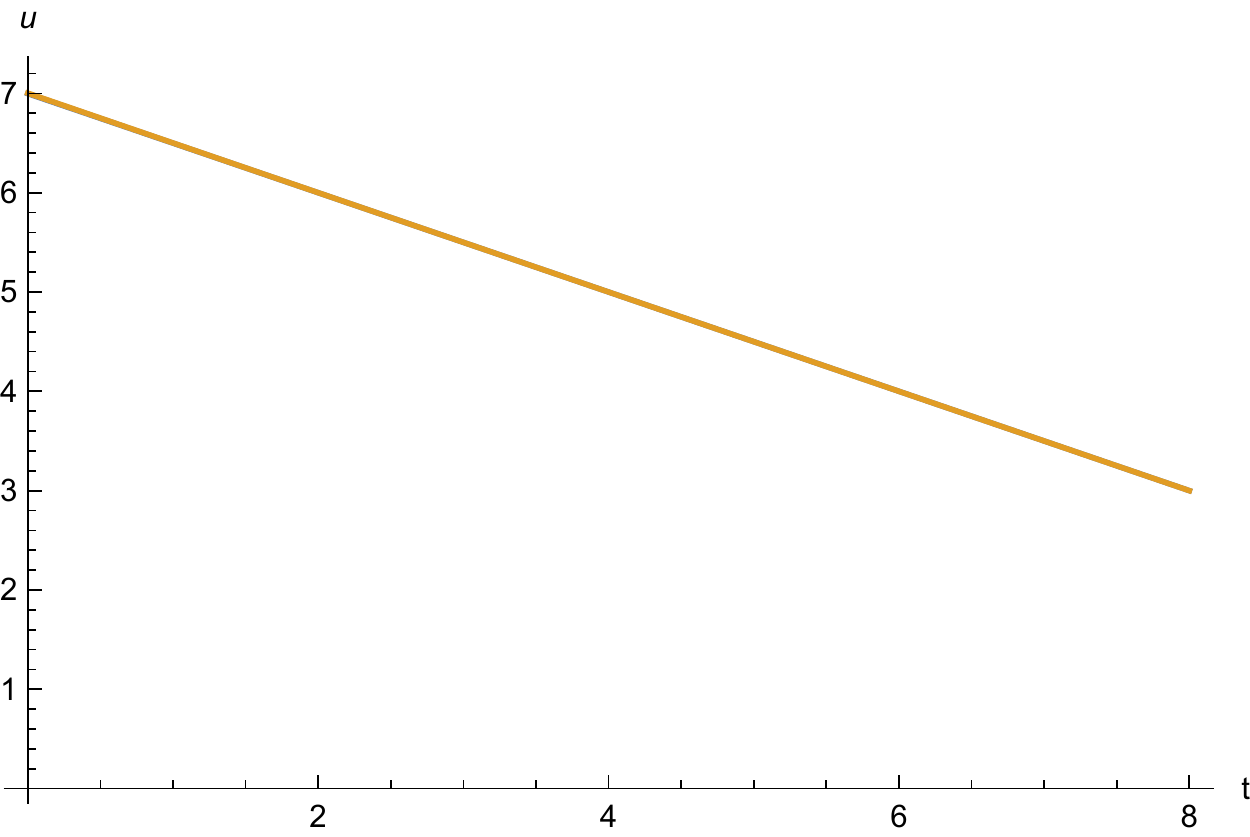}
                \caption{Final value function $u(\cdot,s=100)$.}
            \end{subfigure}
        \caption{Final value of $u(\cdot,s)$ and $\theta(\cdot,s)$, for $s=100$.}
        \label{AA3}
        \end{figure}
                \begin{figure}
                        \centering
                        \includegraphics[width=1\linewidth]{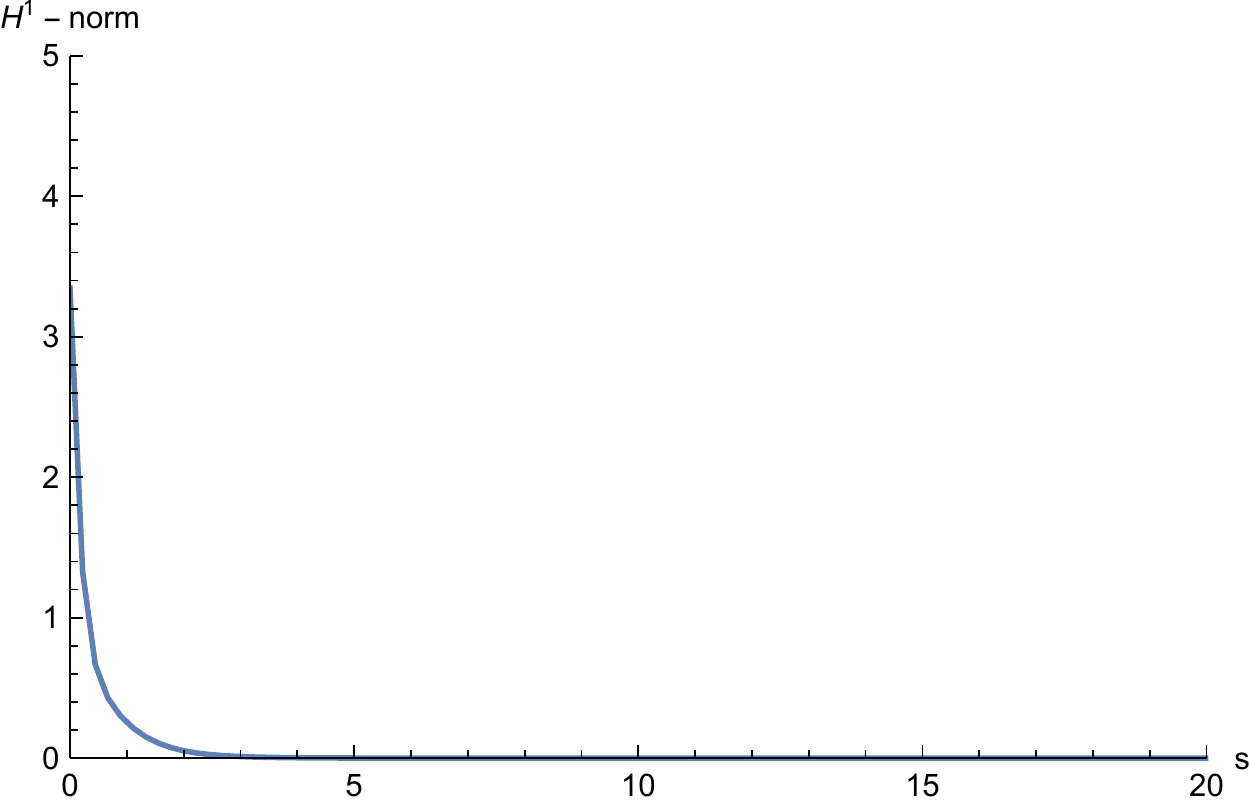}
                    \caption{Evolution, with the parameter $s$, of the $H^1$-norm of the difference between the computed solution $(u,\theta)(\cdot,s)$ and the analytical solution for the unconstrained probability case.}
                        \label{AA4}
                \end{figure}
                The paradigm-shift problem is a potential MFG with the Hamiltonian
                corresponding to
                \begin{equation*}
                \tilde h(\Delta_i u,i) = -\frac 1 2 ((u^i-u^j)^+)^2, \text{ and } F(\theta) = \frac{\theta_1^2 + \theta_2^2}{2}
                \end{equation*}
                in \eqref{hfor}.
                Thus, as a final test to our numerical method, we investigate the evolution of the Hamiltonian. 
                In this case, as expected, the Hamiltonian converges to a constant,  
                see Figure \ref{fig:cons_ham}.  
                \begin{figure} 
                        \centering
                        \includegraphics[width=1\linewidth]{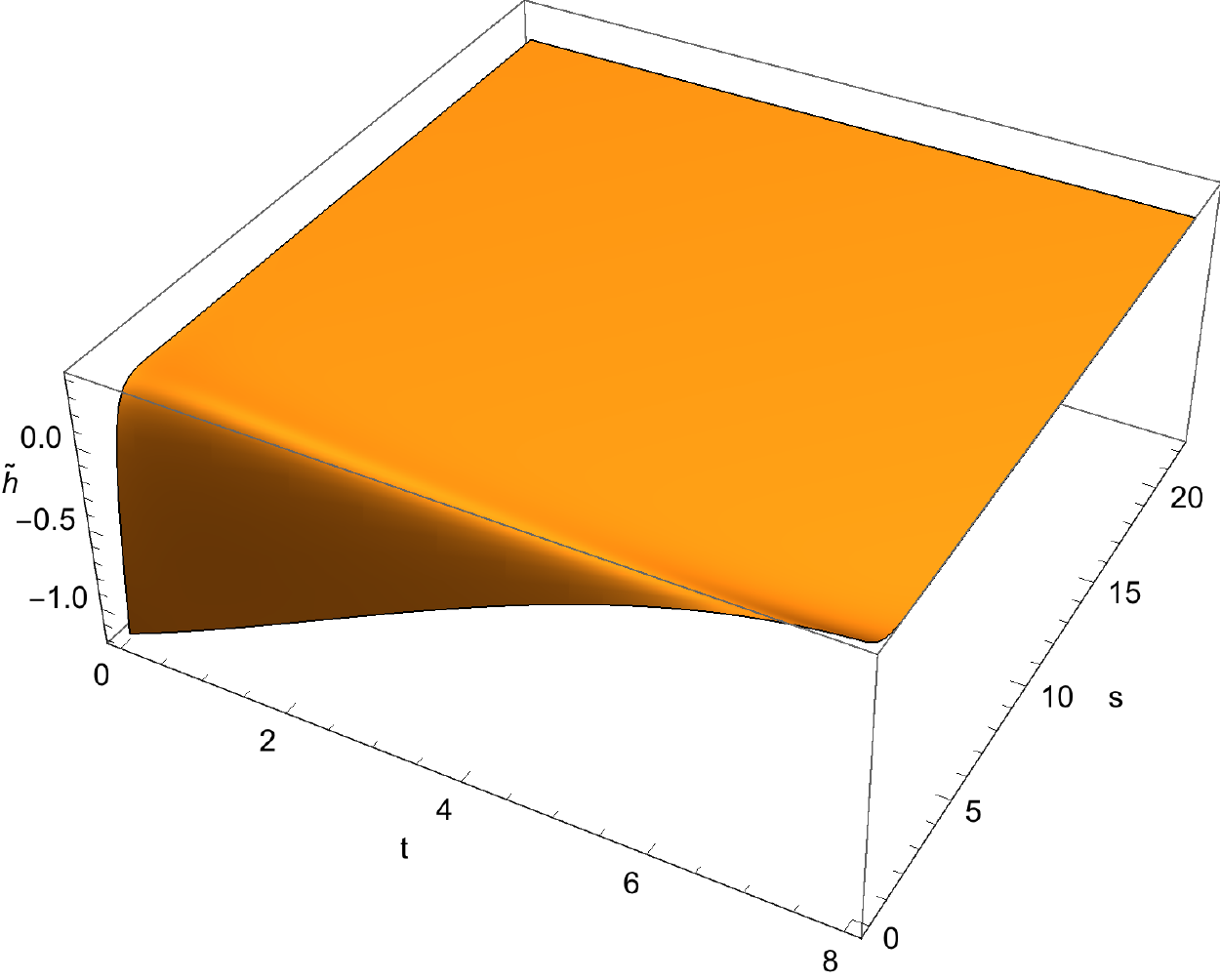}
                        \caption{Evolution of the Hamiltonian for $0\leq s\leq 20$.
                                }
                        \label{fig:cons_ham}
                \end{figure}

        In the preceding example, while iterating \eqref{eq:prj},  $\theta$ remains away from $0$. 
        In the next example, we consider a problem where, without the projection $P$ in \eqref{eq:prj}, positivity is not preserved.
        We set  $N=500$ and chose initial conditions as in Figure \ref{it_cond_proj}. 
    \begin{figure}
      \centering
      \begin{subfigure}{0.5\textwidth}
          \centering
              \includegraphics[width=1\linewidth]{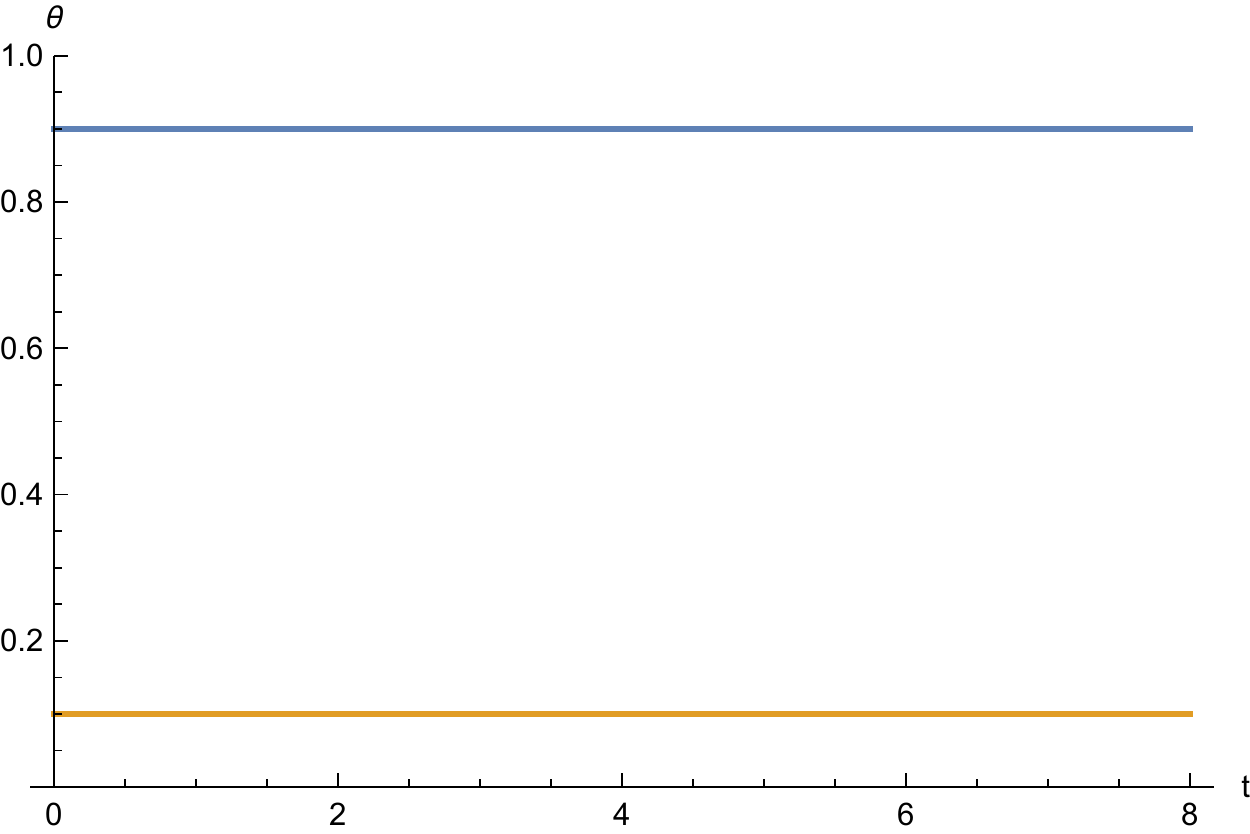}
              \caption{Initial condition $\theta(\cdot,s=0)$.}
             \label{fig:it_cond}
      \end{subfigure}
      \begin{subfigure}{0.5\textwidth}
      \centering
        \includegraphics[width=1\linewidth]{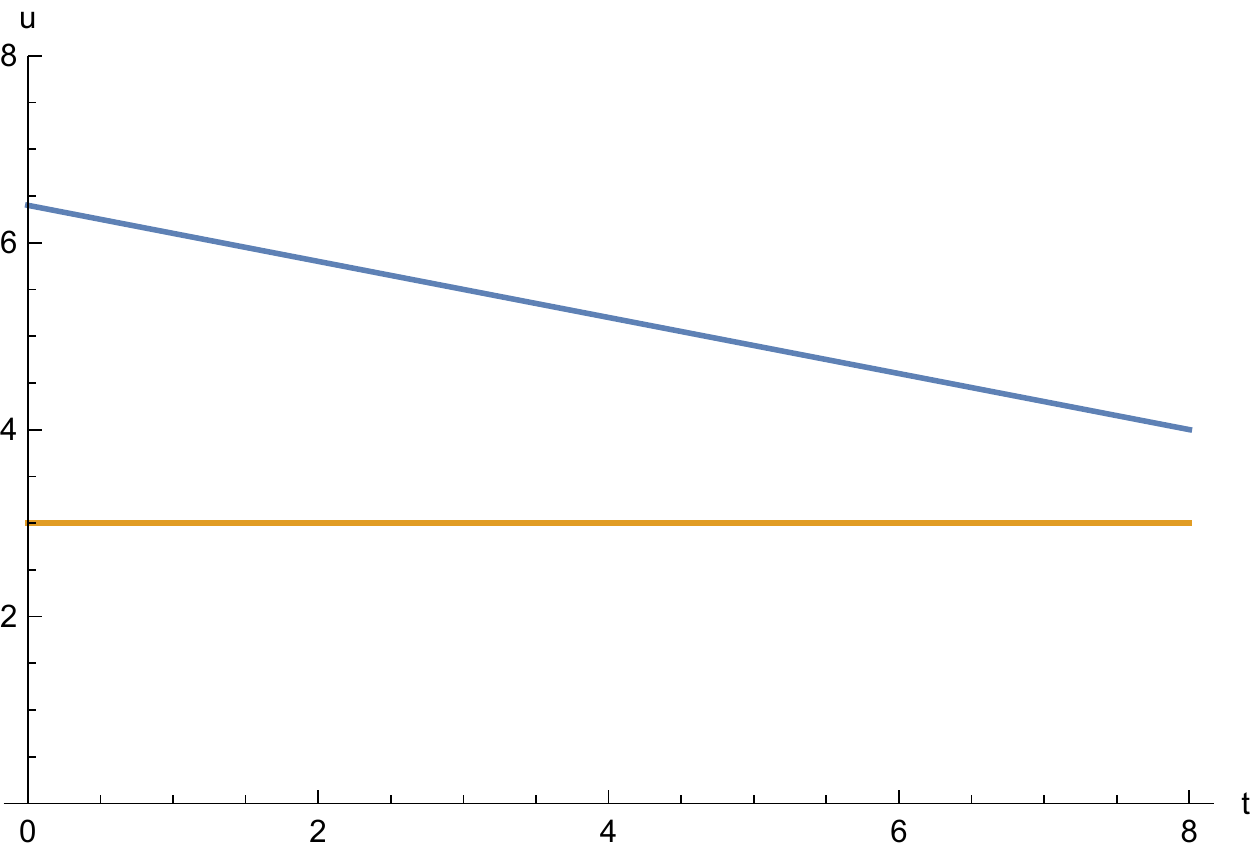}
        \caption{Initial condition $u(\cdot,s=0)$.}
      \end{subfigure}
      \caption{The blue lines correspond to the initial values ($s=0$) for state 1, $(\theta^1,u^1)$, the orange lines to the initial values for state 2, $(\theta^2,u^2)$.}
      \label{it_cond_proj}
    \end{figure}
        In Figure \ref{BB2}, we show the evolution by \eqref{eq:prj} for $s\in[0,20]$.
        In Figure \ref{BB3}, we see the final result for $s=100$. 
    Finally, in Figure \ref{BB4}, we show the evolution of the $H^1$ norm of the difference $\|(\tilde u, \tilde \theta) - (u,\theta) \|_{H^1([0,T])}^2(s)$, for $s\in[0,50]$.
              
        \begin{figure}
      \centering
                      \begin{subfigure}{0.5\textwidth}
          \centering
              \includegraphics[width=1\linewidth]{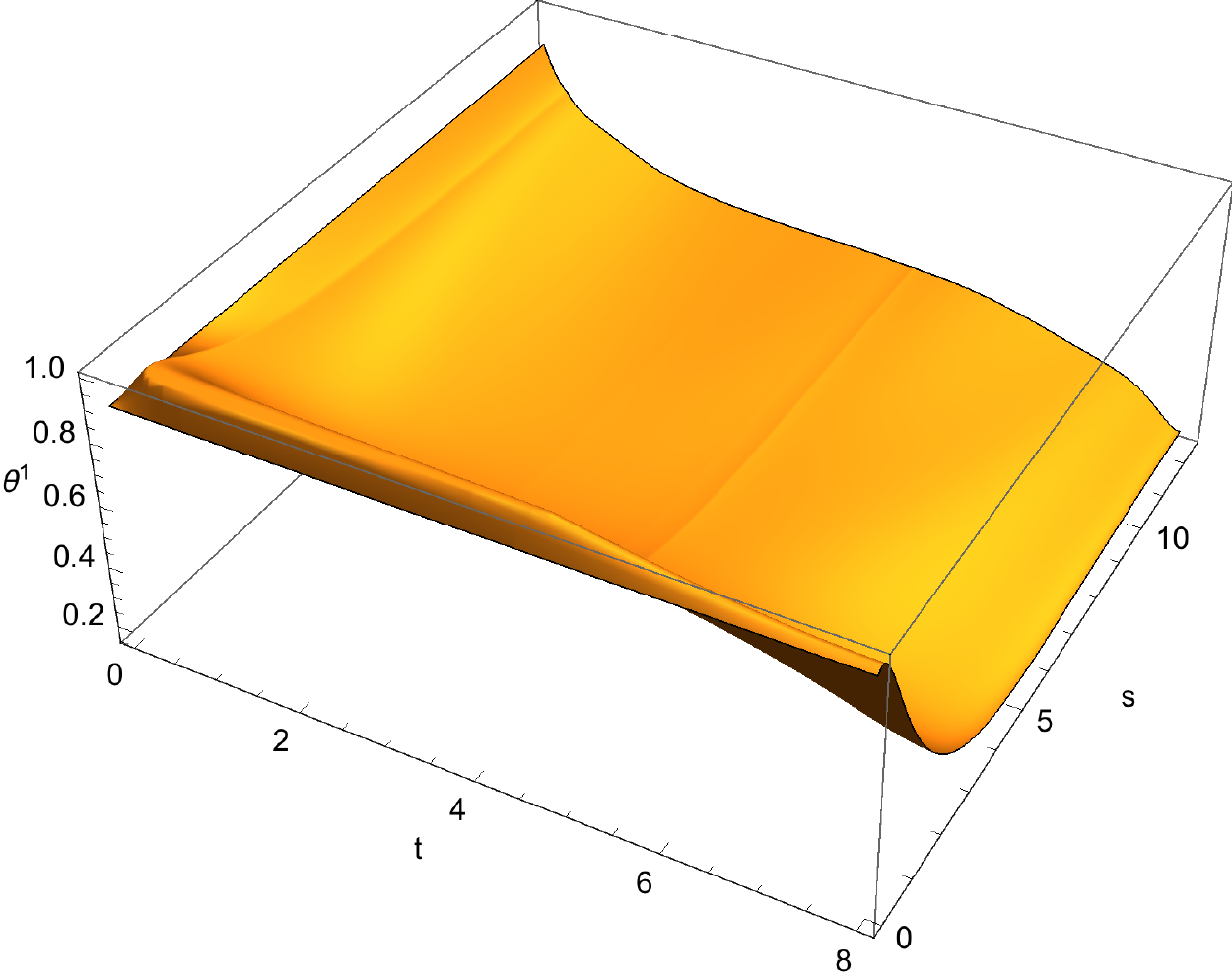}
              \caption{Distribution of players $\theta^1$, at each time $t$.}
                      \end{subfigure}
          \begin{subfigure}{0.5\textwidth}
          \centering
              \includegraphics[width=1\linewidth]{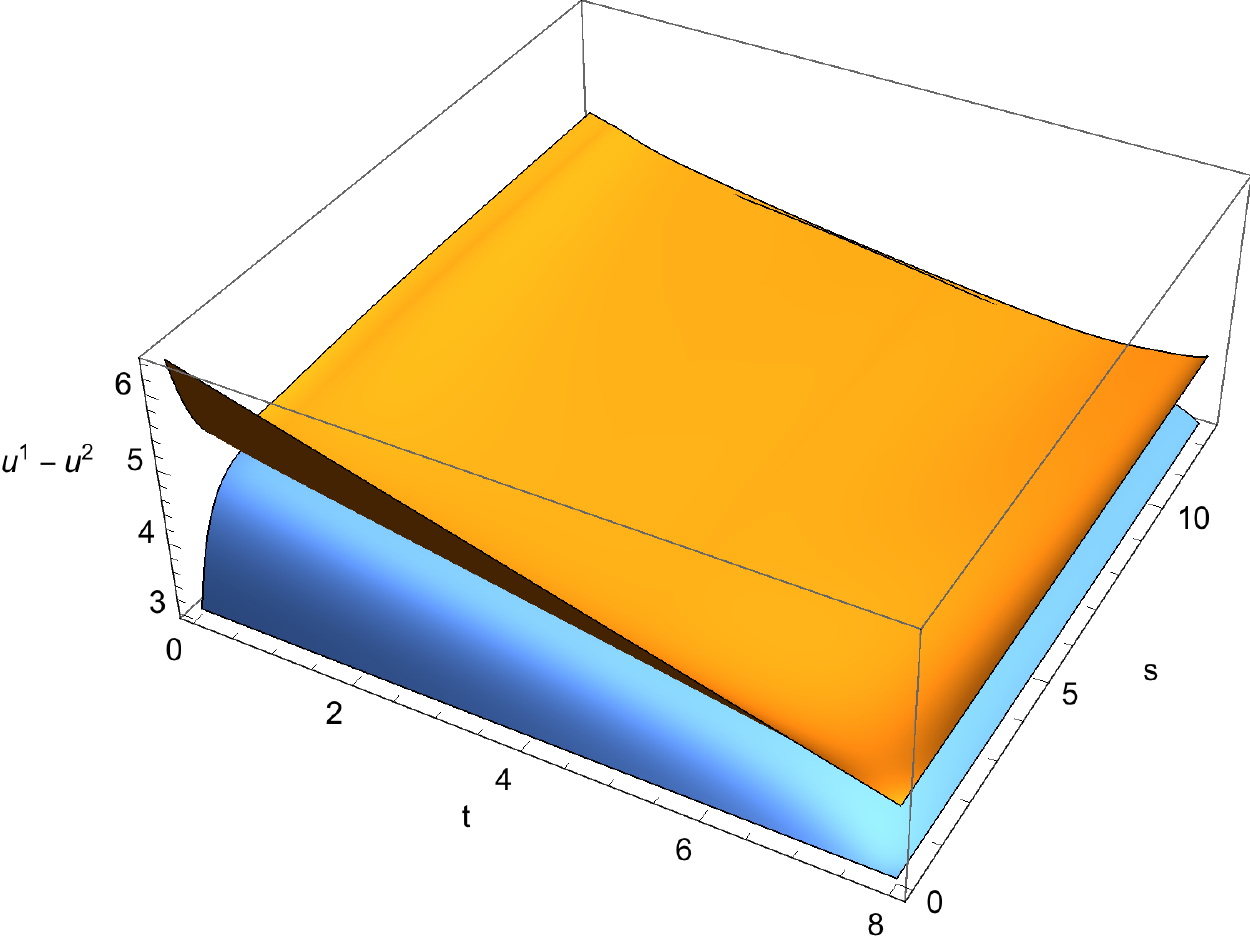}
              \caption{Value functions $u^1$, and $u^2$.}
          \end{subfigure}
      \caption{Evolution of $u(\cdot,s)$ and $\theta(\cdot,s)$, for $s\in[0,20]$. The quantities for the state 1 are depicted in blue and for state 2 in orange.}
      \label{BB2}
      \end{figure}
              \begin{figure}
      \centering
                      \begin{subfigure}{0.5\textwidth}
          \centering
              \includegraphics[width=1\linewidth]{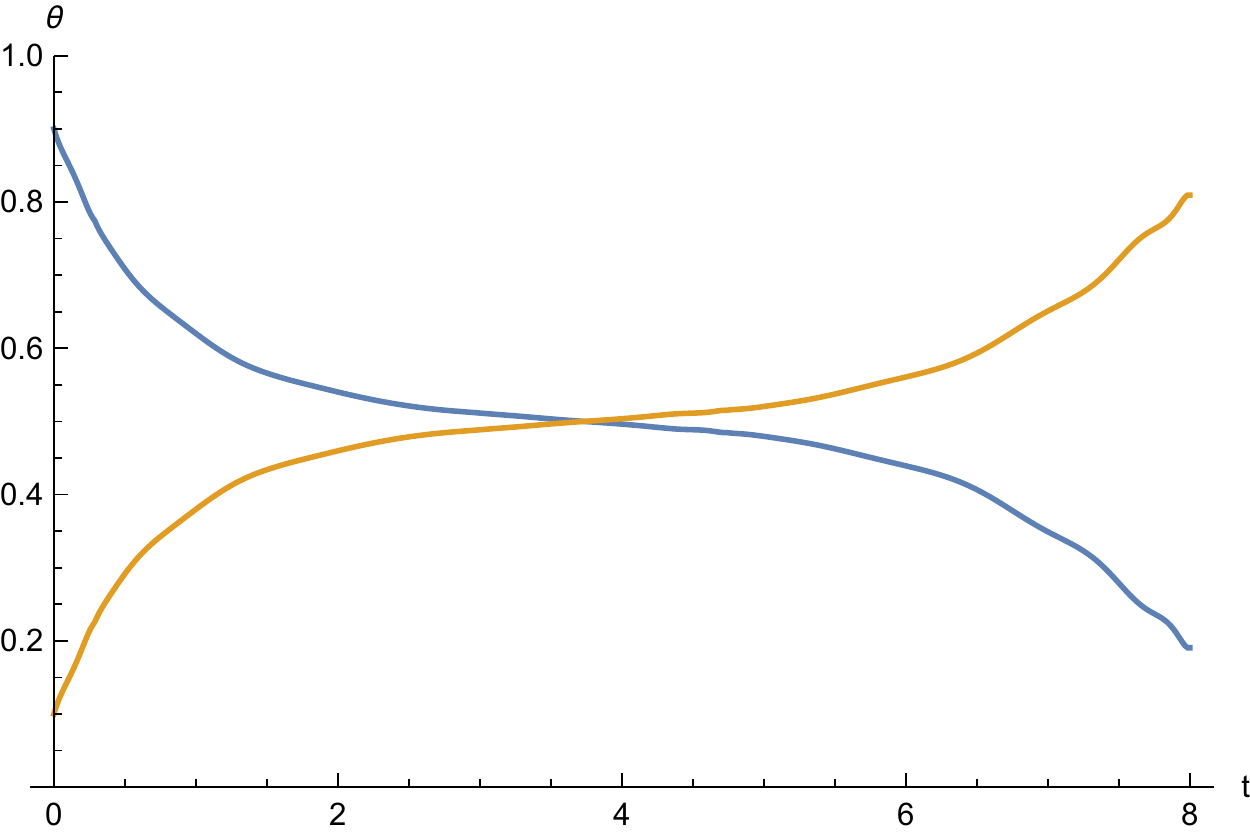}
              \caption{Final distribution of players $\theta(\cdot,100)$.}
                      \end{subfigure}
          \begin{subfigure}{0.5\textwidth}
          \centering
              \includegraphics[width=1\linewidth]{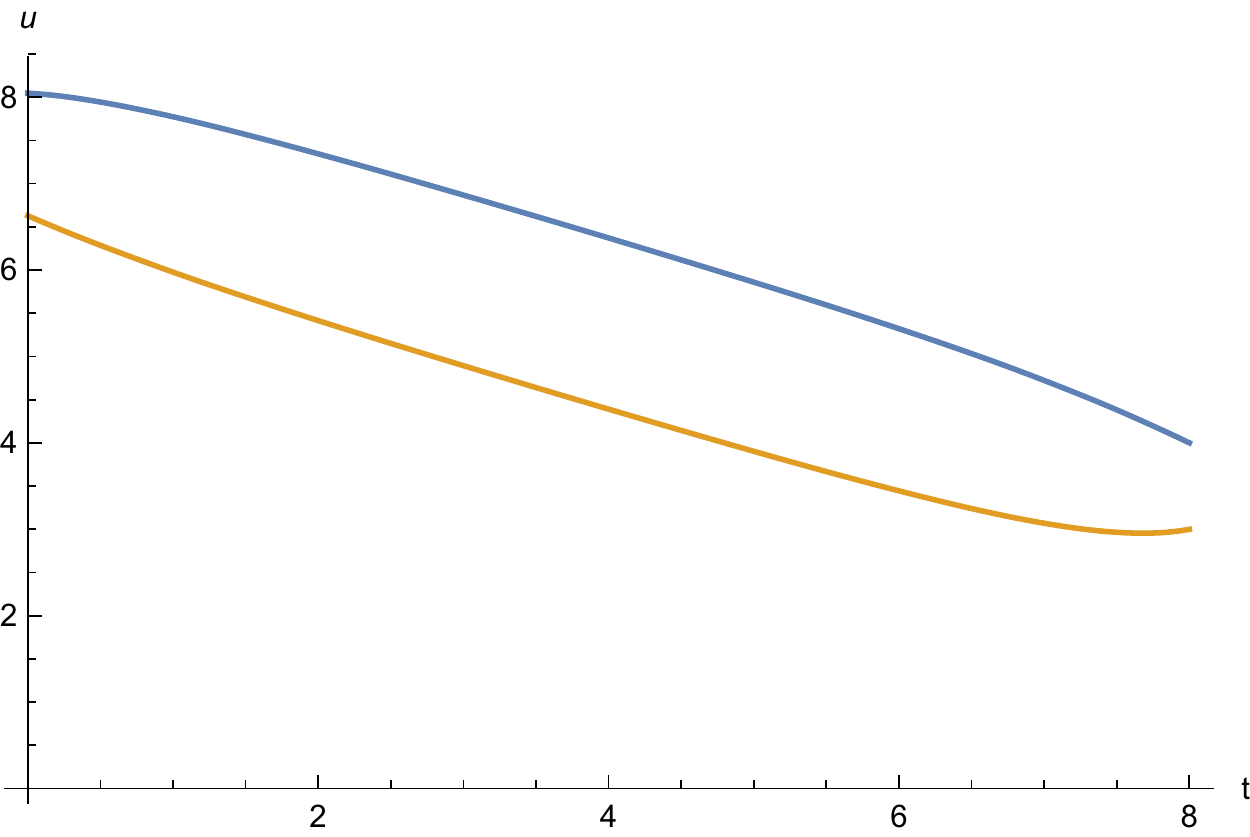}
              \caption{Final value functions $u(\cdot,100)$.}
          \end{subfigure}
      \caption{Final value of $u(\cdot,s)$ and distribution $\theta(\cdot,s)$, at $s=100$.}
      \label{BB3}
      \end{figure}
              \begin{figure}
                      \centering
                      \includegraphics[width=1\linewidth]{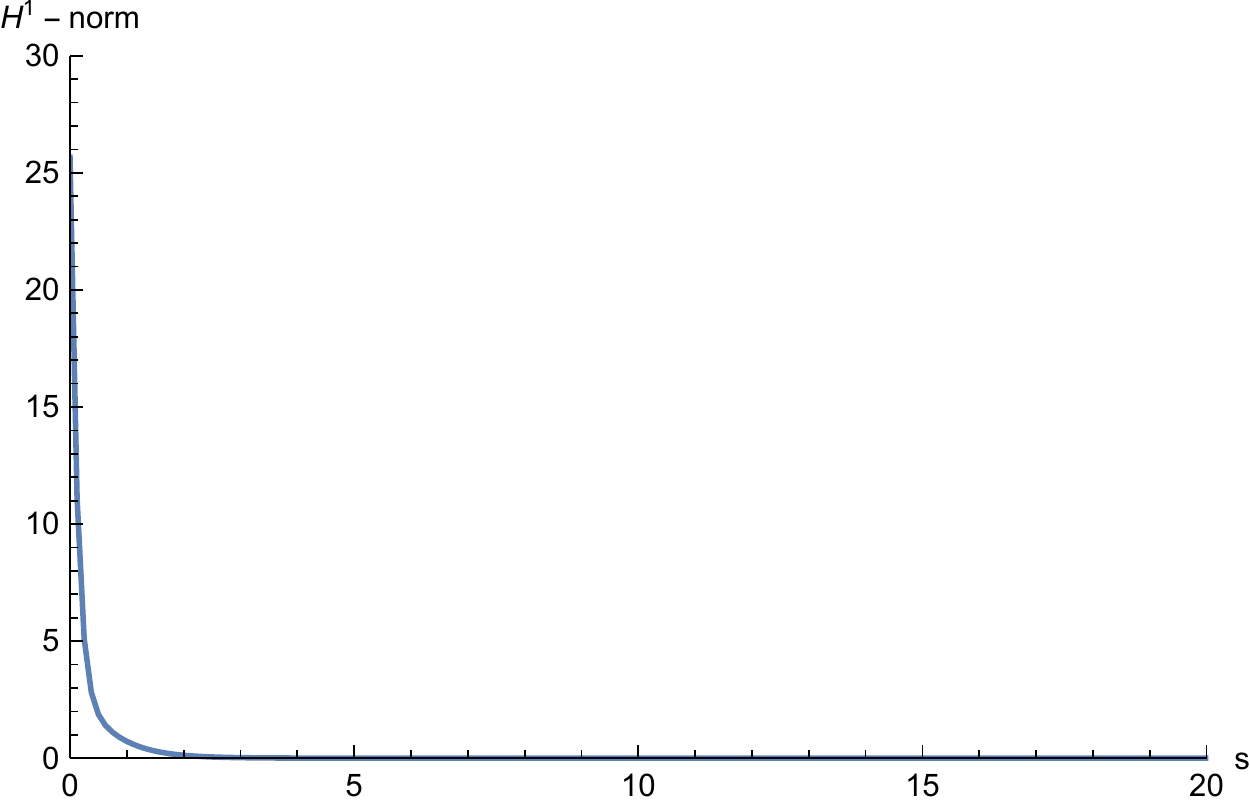}
                  \caption{Evolution, with respect to the parameter $s$, of the $H^1$-norm of the difference of the solution $(u,\theta)(\cdot,s)$ and the solution obtained at 
                                                   $s=100$: $\|(u,\theta)(\cdot,s) - (u,\theta)(\cdot,100)\|_{H^1}$. 
                                                   }
                      \label{BB4}
              \end{figure}
        In Figure \ref{fig:cons_ham_pos}, we plot the evolution of the Hamiltonian for using the projection method. Again, we obtain the numerical conservation of the Hamiltonian. 

        \begin{figure} 
                \centering
            \includegraphics[width=1\linewidth]{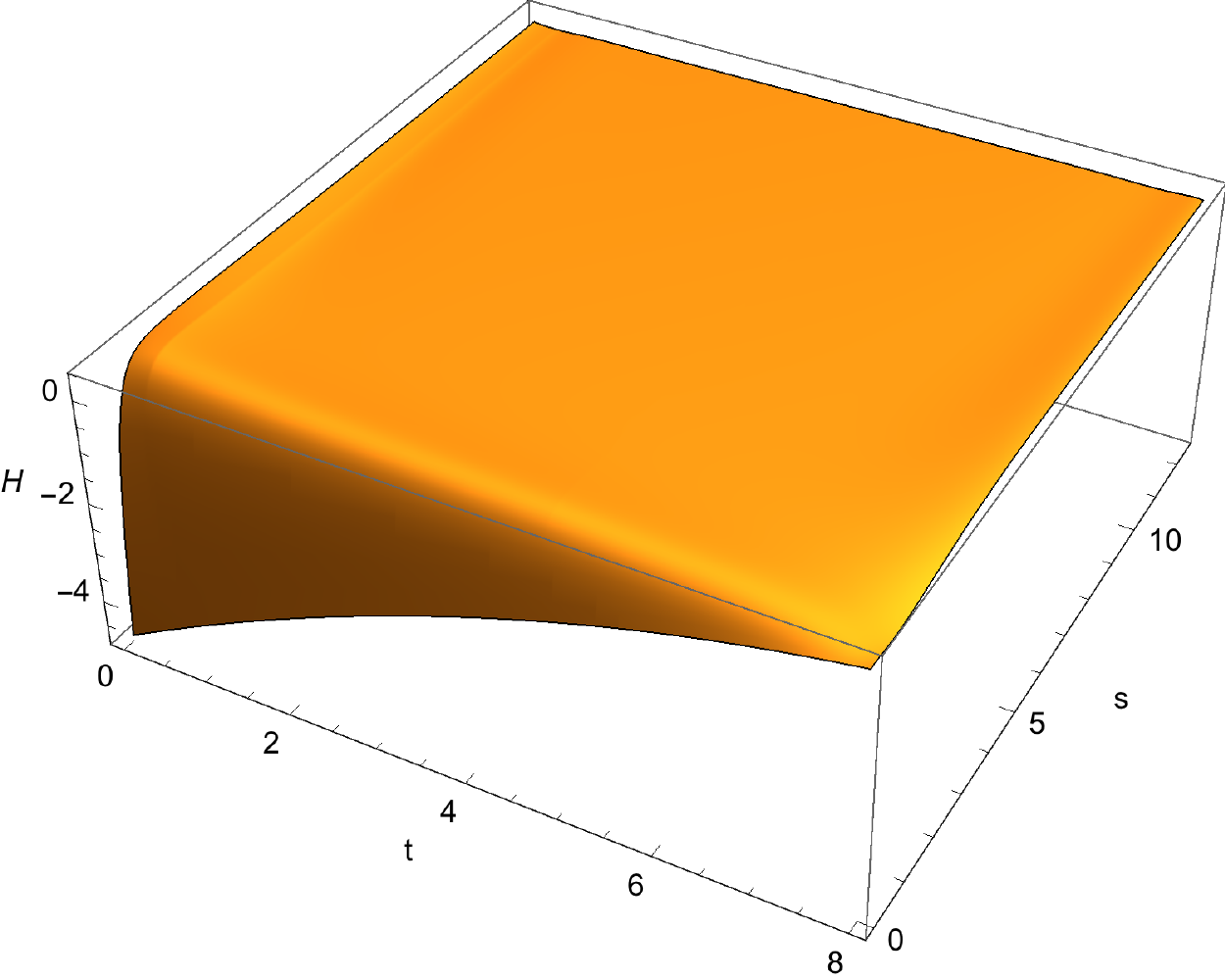}
            \caption{Evolution of the Hamiltonian with the $s$-dynamics that preserves the probability and the positivity of the distribution of players. 
                                        }
            \label{fig:cons_ham_pos}
        \end{figure}
\section{Conclusions} 
\label{sec:conclusions}
 
As the examples in the preceding sections illustrate, we have developed an effective method for the numerical approximation of monotonic finite-state MFGs.
As observed previously, \cite{MR2928376,achdou2013finite, DY, CDY}, monotonicity properties
are essential for the construction of effective numerical methods
and were used explicitly in \cite{AFG}. 
 Here, in contrast with earlier approaches, we
do not use a Newton-type iteration as in \cite{achdou2013finite, DY} nor require the solution of the master equation as in \cite{Gomes:2014kq}, which is numerically prohibitive for a large number of states. 
The key contribution of this work is the projection method developed in the previous section 
that made it possible to address the initial-terminal value problem. This was an open 
problem since the introduction of monotonicity-based methods in \cite{AFG}. 
 Our methods can be applied to discretize continuous-state MFGs, and we foresee additional extensions. The first one 
concerns the planning problem considered in \cite{CDY}. A second extension regards boundary 
value problems, which are natural in many applications of MFGs. Finally, our methods may be improved by using higher-order integrators in time, provided monotonicity is preserved. These matters will be
the subject of further research.

\end{document}